\numberwithin{equation}{section}
\newtheorem{thm}{Theorem}[section]
\newtheorem{theorem}{Theorem}
\newtheorem{lemma}[thm]{Lemma}
\newtheorem{proposition}[thm]{Proposition}
\theoremstyle{definition}
\newtheorem{remark}[thm]{Remark}
\newtheorem{definition}[thm]{Definition}
\newtheorem{claim}[thm]{Claim}
\newtheorem{defn-thm}[thm]{Definition-Theorem}
\begin{document}
\title[Non-collapsing maps and projections to Large convex sets]{Harmonic projections in negative curvature II: large convex sets}
\author{Ognjen To\v{s}i\'{c}}\address{
    Mathematical Institute\\
    University of Oxford\\
    United Kingdom
}
\maketitle
\begin{abstract}
    An important result in the theory of harmonic maps is due to Benoist--Hulin: given a quasi-isometry $f:X\to Y$ between pinched Hadamard manifolds, there exists a unique harmonic map at a finite distance from $f$. Here we show existence of harmonic maps under a weaker condition on $f$, that we call non-collapsing -- we require that the following two conditions hold uniformly in $x\in X$: (1) average distance from $f(x)$ to $f(y)$ for $y$ on the sphere of radius $R$ centered at $x$ grows linearly with $R$ (2) the pre-image under $f$ of small cones with apex $f(x)$ have low harmonic measures on spheres centered at $x$. Using these ideas, we also continue the previous work of the author on existence of harmonic maps that are at a finite distance from projections to certain convex sets. We show this existence in a pinched negative curvature setting, when the convex set is large enough. For hyperbolic spaces, this includes the convex hulls of open sets in the sphere at infinity with sufficiently regular boundary.
\end{abstract}
\section{Introduction}
A classical conjecture in the theory of harmonic maps is the Schoen conjecture, stating that for any quasi-isometry $f:\mathbb{H}^2\to\mathbb{H}^2$ of the hyperbolic plane $\mathbb{H}^2$, there exists a harmonic self-map of $\mathbb{H}^2$ at a bounded distance from $f$. This was shown by Markovi\'c \cite{markovic-schoen}, and there have since been numerous generalizations to spaces other than $\mathbb{H}^2$. Most notable results were obtained by Markovi\'c \cite{markovic-h3} (for 3-dimensional hyperbolic space $\mathbb{H}^3$), Lemm--Markovi\'c \cite{lemm-markovic} (for higher-dimensional hyperbolic spaces $\mathbb{H}^n$ for $n\geq 3$), Benoist--Hulin \cite{bh-rank-one} (for rank one symmetric spaces), and Benoist--Hulin \cite{Benoist2017HarmonicQM} (for pinched Hadamard manifolds).
\par Here we generalize the results of \cite{Benoist2017HarmonicQM} on pinched Hadamard manifolds, meaning simply connected complete Riemannian manifolds with sectional curvatures bounded between two negative constants, by weakening the quasi-isometry requirement on the map $f$. 
\par For a pinched Hadamard manifold $X$, we use $\mathrm{dist}(\cdot, \cdot)$ to refer to the path metric on $X$ induced by the Riemannian metric. We will denote the visual boundary at infinity of $X$ with $\partial_\infty X$. Let $B_R(x)$ be the ball of radius $R$ centered at $x$, and let $\sigma_{x,R}$ be the harmonic measure on $\partial B_R(x)$ as seen from $x$.
\begin{definition}\label{dfn:qia}
    A Lipschitz map $f:X\to Y$ between pinched Hadamard manifolds is non-collapsing if the following two conditions hold
    \begin{enumerate}
        \item there exist constants $c, R_0>0$, such that for any $x\in X, R>R_0$, we have 
        \begin{align*}
            \int_{\partial B_R(x)} \mathrm{dist}(f(x), f(y))d\sigma_{x,R}(y)\geq cR, 
        \end{align*}
        and 
        \item for any $\varepsilon>0$, there exist $\theta, R_0>0$ such that for any $x\in X, R>R_0$ and $\xi\in\partial_\infty Y$, we have 
        \begin{align*}
            \sigma_{x, R}\left(\{y\in \partial B_R(x):f(y)\neq f(x)\text{ and }\measuredangle_{f(x)}(\xi, f(y))<\theta\}\right)<\varepsilon,
        \end{align*}
        where $\measuredangle_a(b, c)$ denotes the angle at $a$ between the geodesics $[a, b]$, joining $a$ and $b$, and $[a, c]$, joining $a$ and $c$.
    \end{enumerate}
\end{definition}
\begin{theorem}\label{thm:main-qia}
    For any non-collapsing Lipschitz map $f:X\to Y$ between pinched Hadamard manifolds, there exists a harmonic map $h:X\to Y$ such that $\sup\mathrm{dist}(h, f)<\infty$.  
\end{theorem}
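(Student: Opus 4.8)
The plan is to run the exhaustion argument of Benoist--Hulin, isolate the one place where the quasi-isometry hypothesis is genuinely used — an a priori bound on the distance between the approximating harmonic maps and $f$ — and replace it there by the two non-collapsing conditions of Definition~\ref{dfn:qia}.

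\emph{Step 1: exhaustion and the Dirichlet problem.} After smoothing $f$ (replace it by a $C^\infty$ map that is Lipschitz with a comparable constant and at bounded distance from $f$; the averaged conditions of Definition~\ref{dfn:qia} are preserved, with slightly worse constants), fix $o\in X$ and set $B_n=B_n(o)$. Since $Y$ is Hadamard, the Dirichlet problem for harmonic maps with boundary data $f|_{\partial B_n}$ has a solution $h_n\colon\overline{B_n}\to Y$ (Hamilton), smooth in the interior, and unique because the target has nonpositive curvature (Hartman). Boundary gradient estimates for harmonic maps (Hildebrandt--Kaul--Widman), which depend only on $\mathrm{Lip}(f)$ and on the second fundamental form of $\partial B_n$ (uniformly bounded for $n$ large), bound $|\nabla h_n|$ on a collar of fixed width; iterating them inward one gets constants $D_0,r_0$, independent of $n$, with $\mathrm{dist}(h_n,f)\le D_0$ on $\partial B_{n-r_0}(o)$, and more generally an $n$-independent local Lipschitz bound on the family $\{h_n\}$.

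\emph{Step 2: the a priori estimate (the crux).} I claim $A:=\sup_n\sup_{\overline{B_n}}\mathrm{dist}(h_n,f)<\infty$, and this is exactly where conditions (1) and (2) are used. The natural tools are the convexity, on the Hadamard target, of the distance functions $\mathrm{dist}(\cdot,q)$ and of the Busemann functions $b_\xi$, which makes $\mathrm{dist}(h_n(\cdot),q)$ and $b_\xi\circ h_n$ subharmonic on $X$ and yields, for balls $B_R(x)\subseteq B_n$, the sub-mean-value inequality $b_\xi(h_n(x))\le\int_{\partial B_R(x)}b_\xi(h_n(\zeta))\,d\sigma_{x,R}(\zeta)$; combined with $|b_\xi(h_n(\zeta))-b_\xi(f(\zeta))|\le\mathrm{dist}(h_n(\zeta),f(\zeta))$ this relates the displacement of $h_n$ at $x$ to averages of the boundary data $f$ over $\partial B_R(x)$. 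Condition (1) forces those averaged displacements to grow linearly in $R$, and condition (2) forbids the data from concentrating inside a thin cone at $\partial_\infty Y$; together they exclude the degenerate behaviour — visible already for Lipschitz maps such as nearest-point projection onto a horoball, or maps pushing each large sphere onto a ray towards a fixed point at infinity — in which the harmonic extension would be dragged towards a boundary point and $\mathrm{dist}(h_n,f)$ would blow up. To make this quantitative one fixes a point $x$ where $\mathrm{dist}(h_n(x),f(x))$ is nearly maximal (necessarily far from $\partial B_n$, by the local Lipschitz bound of Step 1, so a ball $B_R(x)$ with $R>R_0$ as in Definition~\ref{dfn:qia} is available), runs the Busemann comparison in the direction in which $h_n(x)$ has travelled, and uses (1) together with (2) — feeding in the collar bound $D_0$ on the part of $\partial B_R(x)$ near $\partial B_n$, and balancing the cone angle $\theta$ supplied by (2) against $R$ and the curvature pinching — to force $\mathrm{dist}(h_n(x),f(x))=O(1)$. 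This quantitative balancing, uniform in $x$, $n$, and $\xi$, is the heart of the argument and its main obstacle.

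\emph{Step 3: compactness and conclusion.} Given $A<\infty$, each $h_n$ maps $B_r(x)$ into $B_{A+\mathrm{Lip}(f)\,r}(f(x))$, so the interior gradient estimate for harmonic maps into nonpositively curved targets (Cheng; Schoen--Yau) gives $|\nabla h_n|\le C(m,a,b)(1+A+\mathrm{Lip}(f))$ on each fixed compact subset of $X$, for $n$ large. Hence $\{h_n\}$ is equi-Lipschitz on compacta; by Arzel\`a--Ascoli and a diagonal extraction a subsequence converges, uniformly on compact subsets of $X$, to a map $h\colon X\to Y$ with $\sup_X\mathrm{dist}(h,f)\le A$. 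A uniform-on-compacta limit of harmonic maps carrying an equi-Lipschitz bound is harmonic (elliptic estimates allow passage to the limit in the harmonic map system), and undoing the smoothing of Step 1 alters $h$ by a bounded amount; this $h$ is the required map. Steps 1 and 3 are routine; the whole difficulty is the a priori estimate of Step 2, which is precisely where the full strength of Definition~\ref{dfn:qia} — that (1) holds on every large sphere about every point, and that (2) is uniform over $\xi\in\partial_\infty Y$ — is consumed.
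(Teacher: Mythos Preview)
Your overall plan---smoothing, solving Dirichlet problems on an exhaustion, proving a uniform a priori bound $\sup_n\sup_{B_n}\mathrm{dist}(h_n,f)<\infty$, then extracting a limit via Cheng's lemma and Schauder estimates---is exactly the paper's architecture, and Steps~1 and~3 are fine (though note the boundary estimate the paper uses, Proposition~\ref{prop:boundary-estimate}, needs a bound on $D^2\tilde f$, not just $\mathrm{Lip}(\tilde f)$, so your smoothing must control two derivatives).

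The gap is Step~2. You correctly identify it as the crux and correctly note that the mechanism must consume both conditions of Definition~\ref{dfn:qia}, but your paragraph does not supply that mechanism: ``run the Busemann comparison \ldots\ and use (1) together with (2) \ldots\ to force $\mathrm{dist}(h_n(x),f(x))=O(1)$'' is a description of what must be shown, not a proof. In particular, the Busemann inequality $b_\xi(h_n(x))\le\int b_\xi(h_n)\,d\sigma$ combined with $|b_\xi(h_n)-b_\xi(f)|\le\mathrm{dist}(h_n,f)$ only yields $-2D\lesssim\int b_\xi(f)\,d\sigma$, which does not by itself bound $D$: the right-hand side carries no information about $D$, and condition~(2) on cones does not enter. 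What is missing is a quantitative link between the \emph{angle} $\measuredangle_{f(x)}(h_n(x),f(y))$ and the \emph{slack} in the triangle inequality for $f(x),f(y),h_n(y)$.

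The paper supplies this link as Lemma~\ref{lm:fundamental-inequality}. The two ideas you do not mention are: (i) when $D=\mathrm{dist}(h_n(x),f(x))$ is large, Cheng's lemma forces $h_n(B_r(x))$ to have angular diameter $\lesssim e^{-aD/2}$ as seen from $f(x)$, so $\measuredangle_{f(x)}(h_n(y),f(y))\approx\measuredangle_{f(x)}(h_n(x),f(y))=:\theta(y)$; and (ii) hyperbolic comparison for the triangle $f(x)f(y)h_n(y)$ gives
\[
\mathrm{dist}(f(x),h_n(y))-\mathrm{dist}(f(y),h_n(y))\le 2a^{-1}\log\tfrac{\pi}{\theta(y)}-\mathrm{dist}(f(x),f(y))+O(1).
\]
Plugging this into the sub-mean-value inequality for $\mathrm{dist}(f(x),h_n(\cdot))$ (not a Busemann function) and using that $x$ is a maximum of $\mathrm{dist}(h_n,f)$ yields
\[
\int_{\partial B_R(x)}\min\Bigl(a\rho(y),\log\tfrac{\pi}{\theta(y)}\Bigr)\,d\sigma_{x,R}(y)\gtrsim\int_{\partial B_R(x)}a\rho(y)\,d\sigma_{x,R}(y)\gtrsim R,
\]
with $\rho(y)=\mathrm{dist}(f(x),f(y))$. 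Condition~(1) gives the last inequality; condition~(2) then says the set where $\log\tfrac{\pi}{\theta(y)}$ is large has small $\sigma_{x,R}$-measure, contradicting the display for $R$ large. This is the ``quantitative balancing'' you allude to; without it, Step~2 is not a proof.
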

It is implicitly contained in the work of Benoist--Hulin that any Lipschitz quasi-isometry is non-collapsing, so Theorem \ref{thm:main-qia} does in fact generalize the Lipschitz case of \cite[Theorem 1.1]{Benoist2017HarmonicQM}. For completeness, we include the proof in \S\ref{subsec:qi-nc}.
The main novelty of Theorem \ref{thm:main-qia} relative to \cite[Theorem 1.1]{Benoist2017HarmonicQM} is our generalization of the ``interior estimate'' \cite[\S 4]{Benoist2017HarmonicQM}.
\par As another application of our generalized interior estimate, we study harmonic maps that are at a finite distance from a nearest-point projection to a convex set in a pinched Hadamard manifold. The study of such maps was initiated by the author in \cite{tosic}, where the main result states that, given a pinched Hadamard manifold $X$, and a set $S$ in the boundary at infinity $\partial_\infty X$ of $X$, such that $S$ has sufficiently low dimension, there exists a harmonic self-map of $X$ that is at a finite distance from the nearest-point projection to the convex hull of $S$. Here we prove an analogue of this result for convex sets that are sufficiently large.
\begin{definition}\label{dfn:admissible}
    A closed convex subset $C$ of a pinched Hadamard manifold $X$ is called admissible if there exists an angle $\theta$ and a distance $R_0$ with the following property. For any $x\in C, R>R_0$, there exists a point $\xi\in\partial_\infty X$ such that \[\partial B_R(x)\cap \mathrm{Cone}(x\xi, \theta)\subseteq\partial B_R(x)\cap C,\]
    where $\mathrm{Cone}(x\xi, \theta)=\{y\in X: \measuredangle_{x}(y, \xi)<\theta\}$.
\end{definition}
\begin{theorem}\label{thm:main-general}
    Let $C$ be an admissible closed convex subset of a pinched Hadamard manifold $X$. There exists a harmonic map $h:X\to X$ that is a finite distance away from the nearest-point retraction $r:X\to C$.
\end{theorem}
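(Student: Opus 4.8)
\emph{Proof plan.} I would argue by exhaustion, following Benoist--Hulin and substituting for the original interior estimate the generalized one that underlies Theorem~\ref{thm:main-qia}. Fix $o\in C$. For each $n$ the Dirichlet problem on $B_n(o)$ with boundary data $r|_{\partial B_n(o)}$ has a unique energy-minimizing solution $h_n\colon\overline{B_n(o)}\to X$, smooth in the interior, since $X$ has nonpositive curvature; moreover $h_n$ takes values in $C$, because post-composing any competitor with $r$ does not increase energy (as $r$ is $1$-Lipschitz) and fixes the boundary data, so the minimizer already lies in $C$. The whole content of the theorem is the uniform bound $\sup_n\sup_{B_n(o)}\mathrm{dist}(h_n,r)<\infty$: once it holds, the $h_n$ lie in a fixed compact set over each fixed ball, the interior gradient estimate for harmonic maps (Cheng) makes them locally uniformly Lipschitz, and a $C^\infty_{\mathrm{loc}}$-limit of a subsequence is a harmonic map $h\colon X\to X$ with $\sup_X\mathrm{dist}(h,r)<\infty$. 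As in \cite{Benoist2017HarmonicQM} the bound splits into a boundary estimate and an interior estimate: since $r$ is $1$-Lipschitz, a standard barrier gives $\mathrm{dist}(h_n(z),r(z))\le C_1$ for $z$ within a fixed distance $\rho_1$ of $\partial B_n(o)$, uniformly in $n$, and it remains to control $\mathrm{dist}(h_n,r)$ at interior points, which I would do by treating $z\in C$ and $x\notin C$ separately.

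\emph{Interior estimate on $C$.} The key observation is that, although $r$ is far from non-collapsing on all of $X$, it \emph{does} satisfy both non-collapsing inequalities at every point $z\in C$ and every scale $R>R_0$, with constants depending only on the admissibility data $(\theta,R_0)$ and the curvature bounds. For condition~(1), admissibility produces $\xi\in\partial_\infty X$ with $\partial B_R(z)\cap\mathrm{Cone}(z\xi,\theta)\subseteq C$, where $r$ is the identity, so $\mathrm{dist}(r(z),r(y))=R$ on this set; since the harmonic measure of an angle-$\theta$ cone on $\partial B_R(z)$ is bounded below by some $c(\theta)>0$ uniformly in $z$ and $R>R_0$ (comparison with the pinching models), one gets $\int_{\partial B_R(z)}\mathrm{dist}(r(z),r(y))\,d\sigma_{z,R}\ge c(\theta)R$. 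For condition~(2), fix $\eta\in\partial_\infty X$: on the admissibility cone $r$ is the identity, so the set there is $\{y:\measuredangle_z(\eta,y)<\theta'\}$, of harmonic measure tending to $0$ with $\theta'$; off the cone $r(y)\in\partial C$ with $\mathrm{dist}(z,r(y))\le R$, and one must bound the harmonic measure of $\{y\in\partial B_R(z):r(y)\in\mathrm{Cone}(z\eta,\theta')\cap\partial C\}$, which is small because $r$ is $1$-Lipschitz and $\partial C$, being the boundary of a convex set, is seen from $z$ within a controlled angular aperture, so the $r$-preimage of a thin cone is a thin slab meeting $\partial B_R(z)$ in small measure. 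Feeding these two inequalities, at the scale $R$ of order $\mathrm{dist}(h_n(z),z)$ that the argument selects, into the proof of the generalized interior estimate yields $\mathrm{dist}(h_n(z),z)\le C_2$ uniformly in $n$ and $z$; with the boundary estimate this bounds $\mathrm{dist}(h_n,r)$ on $C\cap B_n(o)$.

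\emph{Extension to $X\setminus C$ and conclusion.} For $x\notin C$ set $p=r(x)\in\partial C\subseteq C$; we already know $h_n(x)\in C$ and $\mathrm{dist}(h_n(p),p)\le C_2$. To bound $\mathrm{dist}(h_n(x),p)$ I would use a comparison argument on $B_n(o)\setminus C$: the function $z\mapsto\mathrm{dist}(h_n(z),p)$ is subharmonic, it is $\le\mathrm{dist}(z,p)+C_2$ on $\partial C\cap B_n(o)$ by the previous step and $\le\mathrm{dist}(z,x)$ on $\partial B_n(o)$ since $r$ is $1$-Lipschitz, and—using that $h_n$ takes values in $C$, together with an auxiliary convex subset of $C$ through $p$ supplied by admissibility (a geodesic ray entering $C$)—one builds a superharmonic barrier dominating these boundary values while remaining bounded at $x$, giving $\mathrm{dist}(h_n(x),p)\le C_3$ uniformly. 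Then $\sup_n\sup_{B_n(o)}\mathrm{dist}(h_n,r)\le\max(C_1,C_2,C_3)<\infty$, and passing to the limit as above completes the proof.

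I expect the decisive step to be the verification of condition~(2) of non-collapsing for the nearest-point retraction at points of $C$—that a large sphere centred in $C$ is not mapped by $r$ into a narrow cone at infinity—since this, and not mere convexity, is exactly what the admissibility hypothesis is designed to supply. The barrier construction in the last step is a secondary difficulty, the delicate point there being to dominate the boundary contribution of the unbounded piece $\partial B_n(o)\setminus C$.
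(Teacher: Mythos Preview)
Your architecture is right: exhaustion by balls, boundary estimate, generalized interior estimate at points where $r$ behaves like a non-collapsing map, and a separate argument away from $C$. But two of the three substantive steps are underspecified in ways that matter.

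\textbf{The extension to $X\setminus C$ is a genuine gap.} You propose to bound $\mathrm{dist}(h_n(x),p)$ for $p=r(x)$ by a superharmonic barrier on $B_n(o)\setminus C$ that dominates $\mathrm{dist}(z,p)+C_2$ on $\partial C$ and $\mathrm{dist}(z,x)$ on $\partial B_n(o)$, yet stays bounded at $x$. Those boundary values grow linearly in $\mathrm{dist}(z,x)$, and in pinched negative curvature there is no obvious superharmonic function with linear growth at infinity and a fixed value at an interior point (distance functions to points or to convex sets are \emph{sub}harmonic). The single geodesic ray into $C$ that you invoke does not supply such a barrier. The paper handles this differently and crucially: it first replaces $r$ by a smooth $\tilde r$ at bounded distance with $\norm{\tau(\tilde r)}\lesssim e^{-a\,\mathrm{dist}(\cdot,C)}$ (from \cite[Corollary~3.7]{tosic}); then the Schoen--Yau inequality gives $\Delta\,\mathrm{dist}(h_N,\tilde r)\gtrsim -e^{-a\,\mathrm{dist}(\cdot,C)}$, and this decaying defect is absorbed by a \emph{bounded} subharmonic $\Phi$ with $\Delta\Phi\gtrsim e^{-a\,\mathrm{dist}(\cdot,C)}$ on $X\setminus N_D(C)$ (from \cite[Proposition~4.4]{tosic}), so that $\mathrm{dist}(h_N,\tilde r)+c\Phi$ is subharmonic there and the maximum principle pushes the maximizer into $N_D(C)$. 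The exponential decay of $\tau(\tilde r)$ is what makes a bounded corrector suffice; without it you are stuck trying to dominate linearly growing data. Your observation that the energy-minimizing $h_n$ lands in $C$ is correct and attractive, but it does not by itself close this gap, and once you smooth $r$ to get tension-field control you lose that property anyway.

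\textbf{The verification of condition~(2) is imprecise.} Splitting into ``on the admissibility cone'' and ``off it'' does not work cleanly: points of $\partial B_R(z)$ off that particular cone may still lie in $C$ (so $r(y)\notin\partial C$), and the assertion that $\partial C$ is ``seen from $z$ within a controlled angular aperture'' is false in general (think of $C$ a half-space). The paper instead proves the pointwise geometric fact that for $w,y\in C$ and $z\in r^{-1}(y)$ one has $\measuredangle_w(y,z)\le\pi e^{-a\,\mathrm{dist}(w,y)}$ (a comparison argument); this immediately yields
\[
r^{-1}\bigl(\mathrm{Cone}(r(x)\xi,\theta)\setminus B_{\omega(R)}(r(x))\bigr)\subseteq \mathrm{Cone}(r(x)\xi,\theta+\pi e^{-a\omega(R)}),
\]
whose harmonic measure on $\partial B_R(x)$ is small by \cite{Benoist2020HarmonicMO}. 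Note that admissibility is \emph{not} used for condition~(2); it is used only for condition~(1), which you handle correctly.

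Finally, your boundary estimate also relies on smoothing: the Benoist--Hulin barrier (their Proposition~3.7) needs bounded second derivative of the comparison map, which $r$ does not have. This is minor once you accept working with $\tilde r$, but it is another place where your ``$h_n\subset C$'' strategy and the actual analytic requirements pull in opposite directions.
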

Note that nearest point projections are in general not non-collapsing, so Theorem \ref{thm:main-general} can not be derived directly from Theorem \ref{thm:main-qia}. As mentioned above, the key common ingredient in both Theorem \ref{thm:main-general} and Theorem \ref{thm:main-qia} is the generalized interior estimate.
\par A rich class of admissible convex sets in hyperbolic spaces $\mathbb{H}^n$ is provided by convex hulls of open sets in $\partial_\infty\mathbb{H}^n\cong \mathbb{S}^{n-1}$ with sufficiently regular boundary.
\begin{theorem}\label{thm:main-admissible}
    Let $U\subseteq\partial_\infty \mathbb{H}^n=\mathbb{S}^{n-1}$ be an open set with quasiconformal boundary. Then the convex hull of $U$ is admissible.
\end{theorem}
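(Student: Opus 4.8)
To prove Theorem~\ref{thm:main-admissible}, the plan is to reduce the statement to a property of round caps in the sphere at infinity, and then to invoke the classical fact that quasiballs satisfy an interior corkscrew condition with constants controlled by the quasiconformal data.

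First I would record a translation of Definition~\ref{dfn:admissible} in this setting. Put $C:=\mathrm{Hull}(U)$ and note that $U\subseteq\partial_\infty C$ (every point of $U$ is an ideal endpoint of a geodesic joining two points of $U$), so for every $x\in C$ and every $\eta$ in the closure $\overline U\subseteq\mathbb S^{n-1}$ the ray $[x,\eta)$ lies in $C$, by convexity and closedness of $C$. Hence, if for some $\xi\in\partial_\infty\mathbb H^n$ the set of directions at $x$ making angle $<\theta$ with $[x,\xi)$ has all its ideal endpoints in $\overline U$ --- equivalently, the ball of radius $\theta$ around $\xi$ in the \emph{visual metric seen from $x$} is contained in $\overline U$ --- then $\mathrm{Cone}(x\xi,\theta)\subseteq C$, and in particular $\partial B_R(x)\cap\mathrm{Cone}(x\xi,\theta)\subseteq\partial B_R(x)\cap C$ for every $R>0$. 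Thus it suffices to find a single $\theta>0$ such that for every $x\in C$ the set $\overline U$ contains a visual ball of radius $\theta$ as seen from $x$. Now fix $x$ and choose a hyperbolic isometry $m_x$ of $\mathbb H^n$ carrying $x$ to the centre of the ball model; the visual metric seen from $x$ is exactly the pullback under $m_x$ of the round metric on $\mathbb S^{n-1}$, so the task becomes: $m_x(\overline U)$ contains a round spherical cap of radius $\theta$, with $\theta$ independent of $x$.

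The point of moving the basepoint is that it constrains $m_x(\overline U)$ in a scale-fixing way. Reading ``quasiconformal boundary'' in the standard sense that $\overline U$ is a $K$-quasiball (a global $K$-quasiconformal image of a round ball, equivalently $\partial U$ is a $K$-quasisphere) and using that $m_x$ restricts to a conformal map of $\mathbb S^{n-1}$, the set $m_x(\overline U)=\overline{m_x(U)}$ is again the closure of a $K$-quasiball, with the \emph{same} constant $K$. On the other hand $x\in\mathrm{Hull}(U)$ gives $0=m_x(x)\in m_x(\mathrm{Hull}(U))=\mathrm{Hull}(m_x(U))\subseteq\mathrm{Hull}(m_x(\overline U))$; and a compact subset of $\mathbb S^{n-1}$ whose hyperbolic convex hull contains the centre of the ball model cannot lie in an open hemisphere, so $\mathrm{diam}_{\mathbb S^{n-1}}\big(m_x(\overline U)\big)\ge\pi/2$. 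This is the only place where the hypothesis $x\in C$ (as opposed to merely $x\in\mathbb H^n$) is used: it is what prevents $m_x$ from shrinking $U$ to something small, and it fixes a scale independent of $x$.

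Finally I would invoke that $K$-quasiballs in $\mathbb S^{n-1}$ satisfy the interior corkscrew condition with a constant $c=c(K,n)$ depending only on $K$ and $n$ --- a standard property of quasiballs, which are uniform domains with data controlled by $K$ and $n$ (V\"ais\"al\"a; Gehring--Osgood). Applying this to $V:=m_x(U)$, whose closure $m_x(\overline U)$ has spherical diameter $\ge\pi/2$, at an arbitrary boundary point $\zeta_0\in\partial V$ and at the fixed scale $r=\pi/8$ (which is below $\tfrac12\,\mathrm{diam}_{\mathbb S^{n-1}}(V)$ by the previous step) yields a round cap of radius $c\pi/8$ contained in $V$. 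Its centre, pulled back by $m_x$, is a point $\xi\in\partial_\infty\mathbb H^n$ around which the visual ball of radius $\theta:=c(K,n)\pi/8$ lies in $U\subseteq\overline U$, which is what we wanted. (For $n=2$ the hypothesis degenerates --- $\partial U$ is then a two-point set and $U$ an arc --- but the same scheme applies: $m_x(\overline U)$ is a closed arc not contained in an open semicircle, hence of length $\ge\pi$, so it contains a subarc of length $\pi$; take $\theta=\pi/2$.)

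The main difficulty is the reduction itself, together with the realization that $\xi$ must be allowed to depend on $x$: the change of basepoint $m_x$ induces a severe, $x$-dependent distortion of $\partial_\infty\mathbb H^n$ (it is conformal but not uniformly quasisymmetric in $x$), so one cannot transport a single fat region of $U$ to all basepoints, and any attempt to prove the statement with a fixed $\xi$ is bound to fail --- already for a half-space. Once the problem is recast as ``$m_x(\overline U)$ contains a definite cap'' and the diameter lower bound is extracted, the quasiconformal hypothesis enters only through the standard corkscrew/uniformity property of quasiballs; the convex-geometry ingredients ($U\subseteq\partial_\infty\mathrm{Hull}(U)$, the hemisphere criterion for the centre lying in a convex hull, and the identification of the visual metric with a round metric) are routine.
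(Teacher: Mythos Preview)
Your reduction of admissibility to a purely boundary statement is correct, and in fact cleaner than the paper's: once you observe $\partial_\infty\mathrm{Hull}(U)=\overline U$ and that $[x,\eta)\subseteq C$ for every $x\in C$ and $\eta\in\overline U$, it follows immediately that a visual $\theta$-cap contained in $\overline U$ forces the whole cone into $C$, whereas the paper spends \S\ref{subsec:pf-thm-main-admissible} (including Claim~\ref{claim:tiny-angle-side}) on a more circuitous argument that only places a $\theta/12$-subcone in $C$ for large $R$. Your diameter bound $\mathrm{diam}\,m_x(\overline U)\ge\pi/2$ via ``not contained in an open hemisphere'' is also fine.

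The gap is in the hypothesis. In this paper ``quasiconformal boundary'' means the \emph{local} condition that near every $p\in\partial U$ there is a quasiconformal chart sending $U$ to a half-space and $p$ to $0$; it does \emph{not} mean that $\overline U$ is a global $K$-quasiball. The local condition allows $U$ to be, say, an annulus in $\mathbb S^{2}$ or a disjoint union of several quasiballs, and then your key step --- ``$m_x(\overline U)$ is again a $K$-quasiball, so apply the uniform corkscrew for $K$-quasiballs'' --- is simply unavailable. Nor can you repair this by first proving a corkscrew condition for $U$ itself and then transporting it by $m_x$: M\"obius self-maps of $\mathbb S^{n-1}$ send round balls to round balls, but they are not \emph{uniformly} quasisymmetric as $x$ ranges over $C$, so the corkscrew constant of $m_x(U)$ at the fixed scale $\pi/8$ is not a priori controlled. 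The paper's proof of Lemma~\ref{lm:boundary-analysis} confronts exactly this difficulty by a contradiction/compactness argument: if the visual caps degenerate along a sequence $x_i\to s\in\partial U$, one rescales by isometries $A_i$ carrying $x_i$ to a fixed basepoint, pushes the local straightening chart at $s$ through $A_i$, and extracts a limiting global quasiconformal map, from which a definite cap is read off. Your direct approach is valid and elegant in the special case of a single quasiball, but it does not cover the theorem as stated.
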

Here by quasiconformal boundary we mean that near any point $x\in\partial U$, there exists a local quasiconformal map that sends $U$ to $\mathbb{R}_{+}\times\mathbb{R}^{n-2}$ and $x$ to the origin.
\subsection{More precise results}
We will in fact prove a slightly stronger version of Theorem \ref{thm:main-qia}. 
\begin{definition}\label{dfn:qia-omega}
    Let $\omega:\mathbb{R}_+\to\mathbb{R}_+$ be a function such that $\omega(x)\to\infty$ and $\frac{\omega(x)}{x}\to 0$ as $x\to\infty$. Then a Lipschitz map $f:X\to Y$ is called $\omega$-weakly non-collapsing (weakly non-collapsing map with size function $\omega$) if the following two conditions hold
    \begin{enumerate}
        \item there exist constants $c, R_0>0$, such that for any $x\in X, R>R_0$, we have 
        \begin{align*}
            \int_{\partial B_R(x)} \mathrm{dist}(f(x), f(y))d\sigma_{x,R}(y)\geq cR, 
        \end{align*}
        and 
        \item for any $\varepsilon>0$, there exist $\theta, R_0>0$ such that for any $x\in X, R>R_0$ and $\xi\in\partial_\infty Y$, we have 
        \begin{align*}
            \sigma_{x, R}\left(\{y\in\partial B_R(x):\measuredangle_{f(x)}(\xi, f(y))<\theta\text{ and }\mathrm{dist}(f(x), f(y))\geq \omega(R)\}\right)<\varepsilon.
        \end{align*}
    \end{enumerate} 
    We call an $\omega:\mathbb{R}_+\to\mathbb{R}_+$ with $\omega(x)\to \infty$ and $\frac{\omega(x)}{x}\to 0$ as $x\to\infty$ a sublinear size function. A Lipschitz map is weakly non-collapsing if it is $\omega$-weakly non-collapsing for some sublinear size function $\omega$. 
\end{definition}
\begin{theorem}\label{thm:main-qia-omega}
    For any weakly non-collapsing Lipschitz map $f:X\to Y$, there exists a harmonic map $h:X\to Y$ such that $\sup \mathrm{dist}(h, f)<\infty$.
\end{theorem}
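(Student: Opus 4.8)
The plan is to run the exhaustion scheme of Benoist--Hulin, the whole difficulty being a uniform a priori estimate. Fix $o\in X$ and write $X=\bigcup_n B_n(o)$. As $Y$ is a Hadamard manifold, the Dirichlet problem on $\overline{B_n(o)}$ with boundary data $f|_{\partial B_n(o)}$ has a unique solution $h_n:\overline{B_n(o)}\to Y$, a harmonic map equal to $f$ on $\partial B_n(o)$ (see \cite{Benoist2017HarmonicQM}). I claim there is a constant $C$, depending only on the structural data — the curvature bounds of $X$ and $Y$, the Lipschitz constant $L$ of $f$, the constant $c$ of condition~(1), and the size function $\omega$ — with $\sup_{B_n(o)}\mathrm{dist}(h_n,f)\le C$ for all $n$. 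Granting this, the $h_n$ are bounded on compact sets, hence uniformly Lipschitz there by interior gradient estimates for harmonic maps into nonpositively curved targets, so a diagonal Arzelà--Ascoli argument yields a harmonic $h:X\to Y$ with $\sup\mathrm{dist}(h,f)\le C$. (Since any Lipschitz quasi-isometry is non-collapsing in the sense of Definition~\ref{dfn:qia}, hence weakly non-collapsing, this simultaneously reproves the Lipschitz case of \cite[Theorem~1.1]{Benoist2017HarmonicQM}.)

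The uniform estimate combines, as in \cite{Benoist2017HarmonicQM}, a boundary estimate and an interior estimate. The boundary estimate is essentially unchanged: using that $f$ is $L$-Lipschitz and that $X$ has curvature bounded above by a negative constant, one builds radial barriers near $\partial B_n(o)$ dominating $\mathrm{dist}(h_n(\cdot),f(z_0))$ for $z_0\in\partial B_n(o)$, and concludes that for every $\varepsilon>0$ there is $C_\varepsilon$ with $\mathrm{dist}(h_n(z),f(z))\le\varepsilon\,\mathrm{dist}(z,\partial B_n(o))+C_\varepsilon$. The interior estimate is the new ingredient; it should take the form: there are structural constants $\lambda\in(0,1)$ and $C_1,K>0$ such that if $h:\overline{B_R(x)}\to Y$ is harmonic, $R\ge KM$, and $\mathrm{dist}(h(y),f(y))\le M$ for all $y\in\partial B_R(x)$, then $\mathrm{dist}(h(x),f(x))\le\lambda M+C_1$. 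These combine in the standard self-improving way: let $M=\sup_{\overline{B_n(o)}}\mathrm{dist}(h_n,f)$, attained (nearly) at some $x^\ast$; choosing $\varepsilon$ small enough in the boundary estimate forces $x^\ast$ to have depth $\ge KM$ in $B_n(o)$ once $M$ is large, so the interior estimate applies at scale $R=KM$ and gives $M\le\lambda M+C_1$, whence $M\le C_1/(1-\lambda)$.

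It remains to prove the interior estimate, and this is where Definition~\ref{dfn:qia-omega} enters. Put $p=h(x)$, $q=f(x)$, $D=\mathrm{dist}(p,q)$, and suppose $D$ is large. Composing $h$ with the convex function $\mathrm{dist}_Y(\cdot,q)$ and using subharmonicity together with the Lipschitz bound on $f$ gives only the useless bound $D\le M+LR$; the improvement must exploit the \emph{strict} convexity available in pinched negative curvature. The idea is to compose $h$ instead with a Busemann function $b_\eta$ of $Y$ — for a direction $\eta$ adapted to the geodesic from $q$ to $p$ (a distance function to a far point in that direction serves equally well) — and to apply Green's representation on $B_R(x)$: the $\sigma_{x,R}$-average over $\partial B_R(x)$ of $b_\eta\circ h$ exceeds its value $b_\eta(p)$ at $x$ by a nonnegative gain term, a Green-weighted integral of $\mathrm{Hess}\,b_\eta(dh,dh)$, which by the curvature upper bound dominates a multiple of the energy of $h$ transverse to the $\eta$-direction. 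Conditions~(1) and~(2) are then used to bound this gain from below: condition~(1) forces $h$, which stays within $M$ of $f$ on the spheres about $x$, to expand at the intermediate scales $r\in(R_0,R)$, hence to carry a definite amount of energy inside $B_R(x)$; condition~(2) forces this energy to be angularly spread as seen from $q$, so that a definite fraction of it is transverse to $\eta$. The size function $\omega$ enters precisely in this second step: the only images $f(y)$ that could concentrate in a narrow cone about $\eta$ are, by condition~(2), those at distance $\ge\omega(R)$ from $q$, of $\sigma_{x,R}$-measure $<\varepsilon$, and since $\omega(R)/R\to0$ the remaining images lie too close to $q$ to affect the linear-in-$R$ expansion; thus the argument runs exactly as in the non-collapsing case (Definition~\ref{dfn:qia}, Theorem~\ref{thm:main-qia}), with this one extra distance splitting. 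I expect the main obstacle to be exactly this quantitative conversion — turning the $\sigma_{x,R}$-averaged expansion and angular spreading of $f$ on spheres into a genuine lower bound on the Green-weighted transverse energy of $h$, while correctly coordinating $R$, the angle $\theta$ of condition~(2), and the curvature constants — and then verifying that the resulting gain beats the $M+LR$ baseline by enough to produce the contraction factor $\lambda<1$.
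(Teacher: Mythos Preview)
Your skeleton is right---exhaust by balls, solve Dirichlet problems, combine a boundary estimate with an interior estimate, extract a limit---and this is exactly what the paper does. Two concrete gaps, however, separate the proposal from a proof.

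First, the boundary estimate of \cite[Proposition~3.7]{Benoist2017HarmonicQM} does not follow from the Lipschitz bound alone: its barrier argument needs control on the tension field $\tau(f)$, hence on the second derivatives of $f$. Your $f$ is only Lipschitz, so this step fails as written. The paper first replaces $f$ by a smooth $\tilde f$ at bounded distance with $\norm{D\tilde f}_\infty,\norm{D^2\tilde f}_\infty<\infty$ (via \cite[Lemma~3.1]{tosic}) and then shows, nontrivially, that $\tilde f$ remains weakly non-collapsing (Proposition~\ref{prop:qia-deform}); this last step is what makes the size function flexible enough to survive a bounded perturbation. Even after smoothing, the boundary estimate one actually gets is \emph{linear}, $\mathrm{dist}(h_n,\tilde f)\le C\,\mathrm{dist}(\cdot,\partial B_n)$ (Proposition~\ref{prop:boundary-estimate}), not the $\varepsilon$-slope form you invoke; your self-improvement loop needs the $\varepsilon$-form to force depth $\ge KM$, and with only the linear form the loop closes only if $K\le 1/C$, a compatibility you have no way to arrange.

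Second, and more substantively, your interior estimate is both differently shaped and differently proved than the paper's. The paper's version (Theorem~\ref{thm:generalized-interior-estimate}, via Lemma~\ref{lm:fundamental-inequality}) works at a \emph{fixed} large radius $R$ and yields an absolute bound---no contraction factor $\lambda$, no coupling $R\ge KM$---so the linear boundary estimate suffices: if $M_n\to\infty$ then the maximisers have depth $\to\infty$, and Theorem~\ref{thm:generalized-interior-estimate} gives the contradiction directly. The proof of Lemma~\ref{lm:fundamental-inequality} is also more elementary than your Busemann/transverse-energy sketch. It uses only subharmonicity of $y\mapsto\mathrm{dist}(f(x),h(y))$, so that $\int_{\partial B_R}\mathrm{dist}(f(x),h(y))\,d\sigma_{x,R}\ge\mathrm{dist}(f(x),h(x))$, and then bounds the integrand by comparing the triangle $f(x)f(y)h(y)$ to the hyperbolic plane (Claim~\ref{claim:def-angle}):
\[
\mathrm{dist}(f(x),h(y))-\mathrm{dist}(f(y),h(y))\le 2a^{-1}\log\frac{\pi}{\measuredangle_{f(x)}(f(y),h(y))}-\mathrm{dist}(f(x),f(y))+O(1).
\]
Integrating, using condition~(1), and replacing $\measuredangle_{f(x)}(f(y),h(y))$ by $\measuredangle_{f(x)}(f(y),h(x))$ (permissible once $\mathrm{dist}(h(x),f(x))$ is large, since $h(B_R(x))$ then lies in an exponentially narrow cone from $f(x)$ by Cheng's lemma) gives a lower bound on $\int\min\bigl(a\rho,\log\frac{\pi}{\theta}\bigr)$ that, for $R$ large but fixed, contradicts condition~(2) with the sublinear size function $\omega$. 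No Green's functions, no Hessian of $b_\eta$, no transverse-energy bookkeeping. Your approach via $\mathrm{Hess}\,b_\eta$ may well be workable, but the ``quantitative conversion'' you flag as the main obstacle is precisely what is missing, and the paper's direct route sidesteps it entirely.
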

\begin{remark}\label{remark:non-collapsing}
    \begin{enumerate}
        \item Note that a non-collapsing map as in Definition \ref{dfn:qia} is a weakly non-collapsing map with any size function, so Theorem \ref{thm:main-qia} follows immediately from Theorem \ref{thm:main-qia-omega}.
        \item We will show below that, if $f$ is a weakly non-collapsing map, and $\tilde{f}$ is a Lipschitz map such that $\sup\mathrm{dist}(f, \tilde{f})<\infty$, then $\tilde{f}$ is also weakly non-collapsing (albeit with a different size function). In particular, the harmonic map obtained either from Theorem \ref{thm:main-qia} or Theorem \ref{thm:main-qia-omega} is weakly non-collapsing, but not necessarily with size function $0$.
        \item If $f$ is an $\omega$-weakly non-collapsing map, and $\tilde{\omega}\geq\omega$ is a sublinear size function, then $f$ is also an $\tilde{\omega}$-weakly non-collapsing. Thus the condition $\omega(x)\to\infty$ as $x\to\infty$ in Definition \ref{dfn:qia-omega} is superfluous, and is there merely for convenience.
    \end{enumerate} 
\end{remark}
Both Theorem \ref{thm:main-general} and \ref{thm:main-qia-omega} follow from our generalized interior estimate, stated below.

\begin{definition}\label{dfn:uniformly-inner}
    Let $\mathcal{F}$ be a family of smooth maps between pointed pinched Hadamard manifolds. Then $\mathcal{F}$ is uniformly non-collapsing if it is uniformly Lipschitz, if the domain and range of any function in $\mathcal{F}$ have uniformly bounded pinching constants, and if the following two conditions hold
    \begin{enumerate}
        \item There exist constants $c, R_0>0$, such that for any $f:(X,x)\to (Y,y)$ in $\mathcal{F}$ and any $R>R_0$, we have 
        \begin{align*}
            \int_{\partial B_R(x)} \mathrm{dist}(f(x), f(y))d\sigma_{x,R}(y)\geq cR,
        \end{align*}
        and 
        \item There exists a sublinear size function $\omega:\mathbb{R}_+\to\mathbb{R}_+$ such that for any $\varepsilon>0$, there exist $\theta>0, R_0>0$ such that, for any $f:(X,x)\to(Y,y)$ in $\mathcal{F}$ and $R>R_0$, and any $\xi\in\partial_\infty Y$, we have 
        \begin{align*}
            \sigma_{x, R}\left(\{y\in\partial B_R(x):\measuredangle_{f(x)}(\xi, f(y))<\theta\text{ and }\mathrm{dist}(f(x), f(y))\geq \omega(R)\}\right)<\varepsilon.
        \end{align*}
    \end{enumerate}
\end{definition}
\begin{theorem}[Generalized interior estimate] \label{thm:generalized-interior-estimate}
    Let $\mathcal{F}=\{f_n:(X_n, x_n)\to (Y_n, y_n):n=1,2,...\}$ be a uniformly non-collapsing family. Suppose $R_n$ is a sequence of positive real numbers with $R_n\to\infty$, and let $h_n:B_{R_n}(x_n)\to Y_n$ be a sequence of harmonic maps, such that the maximum of $\mathrm{dist}(h_n, f_n)$ is achieved at $x_n\in X_n$. Then $\sup_n \sup\mathrm{dist}(f_n, h_n)<\infty$. 
\end{theorem}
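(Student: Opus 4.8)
The plan is to argue by contradiction and extract quantitative content from the two non-collapsing conditions through the mean-value inequalities obeyed by compositions of harmonic maps with convex functions. Suppose the conclusion fails; after passing to a subsequence, $D_n:=\sup_{B_{R_n}(x_n)}\mathrm{dist}(f_n,h_n)=\mathrm{dist}(y_n,h_n(x_n))\to\infty$, where $y_n:=f_n(x_n)$. I would begin with two reductions. First, mollify each $f_n$ at unit scale: by uniform pinching this is again uniformly Lipschitz and at a uniformly bounded distance from $f_n$ (so by Remark~\ref{remark:non-collapsing} the family remains uniformly non-collapsing and the hypotheses on the $h_n$ survive up to an additive constant), with in addition uniformly bounded Hessians and tension fields. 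Second, reduce to the regime $\sup_n D_n/R_n<\infty$: if $D_n/R_n\to\infty$ along a subsequence, then $f_n(B_{R_n}(x_n))$ lies in a ball about $y_n$ of radius $o(D_n)$, and the requirement that $\mathrm{dist}(f_n,h_n)$ attain its maximum at $x_n$ squeezes that image into a cone about the direction $\xi_n\in\partial_\infty Y_n$ from $y_n$ to $h_n(x_n)$ whose half-angle shrinks to $0$, contradicting condition (2) with any fixed $\varepsilon$. In the surviving regime, Cheng-type interior gradient estimates for harmonic maps bound $|dh_n|$ uniformly on $B_{R_n/2}(x_n)$, and a local elliptic estimate applied to $\mathrm{dist}(f_n,h_n)$ (which is almost-subharmonic, $\tau(f_n)$ being bounded) gives $\mathrm{dist}(f_n,h_n)\ge D_n-C_0$ on a unit ball around $x_n$, so that near $x_n$ the map $h_n$ behaves like a nearly-constant map with value at distance $\approx D_n$ from $y_n$ in the direction $\xi_n$.

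The natural comparison object is a Busemann potential. Let $b_n$ be the Busemann function of $Y_n$ at $\xi_n$, normalised so that $b_n(y_n)=0$, hence $b_n(h_n(x_n))=-D_n$, and set $\psi_n:=b_n\circ f_n-b_n\circ h_n$ on $B_{R_n}(x_n)$. Then $\psi_n(x_n)=D_n$; $\psi_n\le\mathrm{dist}(f_n,h_n)\le D_n$ everywhere, since $b_n$ is $1$-Lipschitz; $b_n\circ h_n$ is subharmonic, by convexity of Busemann functions and harmonicity of $h_n$, so $b_n\circ h_n$ and $\mathrm{dist}(y_n,h_n(\cdot))$ obey mean-value inequalities on every sphere $\partial B_r(x_n)$; and $b_n\circ f_n$ is harmonic up to a uniformly bounded defect coming from the mollification. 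The aim is to play these mean-value inequalities --- which, through the maximality at $x_n$, say that $h_n$ spreads the spheres $\partial B_r(x_n)$ and keeps $\psi_n$ close to $D_n$ on much of each of them --- against the two non-collapsing conditions on $f_n$, so as to produce, for a well-chosen large radius $r_n<R_n/2$, the opposing bound $\int_{\partial B_{r_n}(x_n)}\psi_n\,d\sigma_{x_n,r_n}\le(1-\eta)D_n+C'$ for a fixed $\eta>0$; such a bound is incompatible with $\psi_n(x_n)=D_n$ and $D_n\to\infty$ and collapses $D_n$ to a uniform constant.

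Producing this opposing bound is exactly the generalisation of Benoist--Hulin's interior estimate \cite[\S4]{Benoist2017HarmonicQM} that the introduction advertises, and it is the step I expect to be the main obstacle. Condition (1) forces $f_n$, hence $h_n$ which is $D_n$-close to it, to push a definite fraction of $\partial B_{r_n}(x_n)$ a distance $\gtrsim r_n\gg D_n$ off $y_n$; condition (2), applied with $\xi=\xi_n$ and $\varepsilon$ fixed small in terms of that fraction, makes a definite fraction of those points $f_n(z)$ subtend angle $\ge\theta$ with the ray $y_n\xi_n$ at $y_n$, hence (thin triangles in pinched negative curvature) lie laterally far from that ray. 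For such a $z$ the direction toward $\xi_n$ at the far lateral point $f_n(z)$ is exponentially close to the direction back toward $y_n$, so a geodesic $[f_n(z),h_n(z)]$ of length $\le D_n$ can keep $\psi_n(z)$ near $D_n$ only by running essentially straight from $f_n(z)$ toward $y_n$ --- i.e.\ $h_n$ would have to shadow the composition of $f_n$ with a radial retraction toward $y_n$ of amount $\approx D_n$ on a positive-measure part of the sphere. Excluding this configuration is where the harmonic, equivalently barycentric, property of $h_n$ must be exploited at all intermediate scales $\rho<r_n$ simultaneously: near $x_n$ the map $h_n$ is almost the constant $h_n(x_n)$, far from $y_n$, whereas an honest radial retraction of $f_n$ would be pinned near $y_n$ there, and one must quantify the clash of these behaviours uniformly in the family, with only the pinching bounds and the Lipschitz constant of $f_n$ as parameters. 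With the opposing bound established, comparing it against the mean-value inequalities yields $\eta D_n\le\mathrm{const}$, the required contradiction.
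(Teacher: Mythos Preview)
The overall strategy --- contradiction, subharmonicity, then exploit the two non-collapsing conditions --- is right, but the Busemann potential $\psi_n=b_n\circ f_n-b_n\circ h_n$ does not carry the mean-value inequality you need. For your final step you require $\int_{\partial B_{r_n}(x_n)}\psi_n\,d\sigma\ge D_n-O(1)$ to clash with the opposing bound $\int\psi_n\le(1-\eta)D_n+C'$, but $b_n\circ h_n$ is \emph{sub}harmonic (so $-b_n\circ h_n$ is superharmonic), while $b_n\circ f_n$, even after mollification, only satisfies a lower Laplacian bound $\Delta(b_n\circ f_n)\ge -C$, since its Hessian term is nonnegative. Thus $\psi_n$ obeys $\Delta\psi_n\le C$ but not $\Delta\psi_n\ge -C$, and the mean-value inequality delivers only an \emph{upper} bound on $\int\psi_n$, which cannot be played against another upper bound. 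The Busemann coordinate also discards exactly the angular information that condition~(2) speaks to, so the link your third paragraph tries to forge between ``$\psi_n(z)$ near $D_n$'' and ``$f_n(z)$ in a thin cone'' is indirect and, by your own account, left as the main unfinished step.

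The paper replaces $\psi_n$ by $\rho_h(y):=\mathrm{dist}(f_n(x_n),h_n(y))$, which is genuinely subharmonic since $f_n(x_n)$ is a \emph{fixed point}: no tension bound on $f_n$ is needed, so your mollification step drops out. Then $\int_{\partial B_R}\rho_h\ge\rho_h(x_n)=D_n$, and one bounds the integrand pointwise: maximality at $x_n$ gives $\rho_h(y)-D_n\le\rho_h(y)-\mathrm{dist}(f_n(y),h_n(y))$, and the hyperbolic law of cosines on the triangle $f_n(x_n)\,f_n(y)\,h_n(y)$ converts the right-hand side into $2a^{-1}\log\frac{\pi}{\theta(y)}-\rho_f(y)+O(1)$, with $\theta(y)=\measuredangle_{f_n(x_n)}(h_n(x_n),f_n(y))$ and $\rho_f(y)=\mathrm{dist}(f_n(x_n),f_n(y))$. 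Integrating and using condition~(1) together with the uniform Lipschitz bound $\rho_f\lesssim R$ forces a definite $\sigma_{x_n,R}$-mass of points with $\theta(y)$ small and $\rho_f(y)\ge\omega(R)$, contradicting condition~(2). All of this runs at one \emph{fixed} radius $R$ chosen large in terms of $\mathcal{F}$ alone; your case split on $D_n/R_n$ is unnecessary, and the argument you sketch for $D_n/R_n\to\infty$ (which does not explain why $f_n$'s image, as opposed to $h_n$'s, should land in a thin cone) is not needed either.
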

\subsection{Lipschitz quasi-isometries are non-collapsing}\label{subsec:qi-nc}
In this brief subsection, we outline why Lipschitz quasi-isometries are non-collapsing. The entire argument is essentially contained in \cite[\S 4.5]{Benoist2017HarmonicQM}. 
\par Let $f:X\to Y$ be a map between pinched Hadamard manifolds, with sectional curvatures between $-b^2$ and $-a^2$, such that 
\begin{align*}
    L^{-1}\mathrm{dist}(x,y)-A\mathrm{dist}(f(x), f(y))\leq L \mathrm{dist}(x,y),
\end{align*}
for some constants $L\geq 1$ and $A\geq 0$. Then $f$ is Lipschitz and the first condition in Definition \ref{dfn:qia} clearly holds, and the second condition follows immediately from the claim below and \cite[Theorem 1.1]{Benoist2020HarmonicMO}. 
\begin{claim}
    For any $\varepsilon>0$, there exists $R_0>0$ large enough such that the following holds. For any $x\in X$ and $R>R_0$, and any $y_1, y_2\in \partial B_R(x)$ such that $\measuredangle_{f(x)}(f(y_1), f(y_2))<\varepsilon$, we have $\measuredangle_x(y_1, y_2)\leq 4\varepsilon^{\frac{a}{Lb}}$.
\end{claim}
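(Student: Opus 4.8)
The plan is to compare the geodesic spread of $y_1,y_2$ in $X$ with that of their images in $Y$, using that $f$ is an $(L,A)$-quasi-isometry together with the thin-triangles (Gromov-hyperbolicity) estimates available in pinched Hadamard manifolds. The key point is that a small angle at $f(x)$ between $f(y_1)$ and $f(y_2)$ forces the Gromov product $(f(y_1)\mid f(y_2))_{f(x)}$ to be large — comparable to $\min(\mathrm{dist}(f(x),f(y_1)),\mathrm{dist}(f(x),f(y_2)))$ up to an additive error depending on $a$, for $\varepsilon$ small — and then the quasi-isometry inequality transfers a lower bound on the Gromov product $(y_1\mid y_2)_x$, which in turn controls $\measuredangle_x(y_1,y_2)$ from above.

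First I would recall the standard comparison estimates in a Hadamard manifold with curvature $\le -a^2$: if $\measuredangle_p(q_1,q_2)=\alpha$ is small and $d_i=\mathrm{dist}(p,q_i)$, then the Gromov product $(q_1\mid q_2)_p = \tfrac12(d_1+d_2-\mathrm{dist}(q_1,q_2))$ satisfies, for the appropriate normalization, a bound of the shape $(q_1\mid q_2)_p \ge \tfrac1a\log\tfrac{c}{\alpha} - C$ (this is just the hyperbolic law of cosines applied in the $-a^2$ comparison space, via Toponogov/Rauch), and conversely in curvature $\ge -b^2$ one has $\measuredangle_p(q_1,q_2)\le C' e^{-b(q_1\mid q_2)_p}$ when the two distances $d_i$ are both at least, say, $(q_1\mid q_2)_p+1$. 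So: applying the first estimate in $Y$ to $p=f(x)$, $q_i=f(y_i)$, $\alpha<\varepsilon$, gives $(f(y_1)\mid f(y_2))_{f(x)}\ge \tfrac1a\log\tfrac{c}{\varepsilon}-C$, valid provided $\mathrm{dist}(f(x),f(y_i))$ is large, which holds once $R_0$ is large since $\mathrm{dist}(f(x),f(y_i))\ge L^{-1}R-A$.

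Next I would transfer this across $f$. Quasi-isometries distort Gromov products by a bounded multiplicative-and-additive amount: from $L^{-1}d_X-A\le d_Y\le Ld_X$ one gets $(y_1\mid y_2)_x \ge L^{-1}\big((f(y_1)\mid f(y_2))_{f(x)}\big) - C''$ for a constant $C''$ depending only on $L,A$ and the hyperbolicity constants of $X,Y$ — this is the usual "quasi-isometries quasi-preserve Gromov products" lemma, whose proof uses that geodesics map to quasigeodesics which fellow-travel geodesics (Morse lemma) in the Gromov-hyperbolic spaces $X,Y$. Combining, $(y_1\mid y_2)_x\ge \tfrac{1}{La}\log\tfrac{c}{\varepsilon}-C'''$. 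Finally, feeding this into the converse comparison estimate in $X$ (curvature $\ge -b^2$), using again that $\mathrm{dist}(x,y_i)=R$ is large relative to the Gromov product, yields
\[
\measuredangle_x(y_1,y_2)\le C' e^{-b(y_1\mid y_2)_x}\le C' e^{bC'''}\,\varepsilon^{\frac{a}{Lb}}\le 4\,\varepsilon^{\frac{a}{Lb}},
\]
the last inequality holding once $\varepsilon$ is small (equivalently once $R_0$ is large, so that $\varepsilon$ is effectively the controlling small parameter; one may also first shrink $\varepsilon$ and absorb $C'e^{bC'''}$ into the constant $4$ by elementary manipulation, noting the statement is only claimed for $\varepsilon$ small enough via the dependence of $R_0$ on $\varepsilon$).

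The main obstacle is bookkeeping the additive constants so that they genuinely get absorbed into the clean exponent $\varepsilon^{a/(Lb)}$ with leading constant $4$: the comparison estimates naturally produce $(q_1\mid q_2)_p\gtrsim \tfrac1a\log(1/\alpha)$ only up to additive error, and the quasi-isometry transfer adds more additive error, so one must be careful that these errors affect only the multiplicative constant in front (which is why the bound $4\varepsilon^{a/(Lb)}$, rather than $\varepsilon^{a/(Lb)}$, is what is claimed) and that the requirement "$\mathrm{dist}(\cdot,\cdot)$ large" in each comparison estimate is met — this is exactly why the hypothesis is $R>R_0$ with $R_0=R_0(\varepsilon)$ large. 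A secondary technical point is to state the Gromov-product comparison lemmas in the precise form needed in a pinched Hadamard manifold (as opposed to a general $\delta$-hyperbolic space), which follows from Rauch/Toponogov comparison; I would either cite these from the literature on CAT$(-a^2)$ and Busemann-type estimates or include a short self-contained derivation from the hyperbolic law of cosines.
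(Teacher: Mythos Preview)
Your strategy is the paper's: both arguments pass through Gromov products and the quasi-isometry transfer lemma (the paper cites \cite[Lemma 2.2]{Benoist2017HarmonicQM}), with the paper arguing by contrapositive (assume $\measuredangle_x(y_1,y_2)\ge\theta:=4\varepsilon^{a/(Lb)}$ and deduce $\measuredangle_{f(x)}(f(y_1),f(y_2))\ge\varepsilon$) rather than directly.

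There is, however, a genuine error in your comparison estimates: you have the roles of $a$ and $b$ reversed. In curvature $\le -a^2$ (Alexandrov/CAT($-a^2$)), angles are \emph{smaller} than in the $\mathbb{H}^2(-a^2)$ comparison triangle, which gives the \emph{upper} bound $(q_1\mid q_2)_p\le a^{-1}\log(C/\alpha)$, equivalently $\alpha\le Ce^{-a(q_1\mid q_2)_p}$; this is exactly \cite[Lemma 2.1(a)]{Benoist2017HarmonicQM}. The \emph{lower} bound $(q_1\mid q_2)_p\ge b^{-1}\log(c/\alpha)$ (small angle forces large Gromov product) comes from the \emph{upper} curvature bound $\ge -b^2$ via Toponogov. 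So your Step~1 in $Y$ should use $b$, not $a$, and your Step~3 in $X$ should use $a$, not $b$. If one follows your labels literally, the chain produces exponent $b/(La)$, not the $a/(Lb)$ you write down; with the corrected labels the chain does give $\varepsilon^{a/(Lb)}$ as required.

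A secondary point: the paper's contrapositive route makes the constant $4$ fall out exactly, since $4$ is the constant in \cite[Lemma 2.1(a)]{Benoist2017HarmonicQM} and no additive error enters the Gromov-product transfer in that direction. In your direct route the additive constants $C,C'',C'''$ exponentiate to a multiplicative constant in front of $\varepsilon^{a/(Lb)}$, and it is not clear without further work that this constant can be taken to be $4$; your remark about ``absorbing'' it is where this would need to be made precise.
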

\begin{proof}Denote the Gromov product by $(a|b)_c:=\frac{1}{2}(\mathrm{dist}(a,c)+\mathrm{dist}(b,c)-\mathrm{dist}(a,b))$ for $a,b,c$ points in a pinched Hadamard manifold. Denote $\theta:=4\varepsilon^{\frac{a}{Lb}}$.
\par Let $y_1, y_2\in\partial B_R(x)$ be such that $\measuredangle_x(y_1, y_2)\geq \theta$. By \cite[Lemma 2.1.a]{Benoist2017HarmonicQM}, we see that $(y_1|y_2)_x\leq a^{-1}\log\frac{4}{\theta}$. Moreover, $(y_1|x)_{y_2}=(y_2|x)_{y_1}=\frac{1}{2}\mathrm{dist}(y_1, y_2)$. Note that 
\begin{align*}
    \cosh(a\mathrm{dist}(y_1, y_2))\geq 1+2\sinh^2 R_0\sin^2\frac{\theta}{2},
\end{align*}
by comparison to the hyperbolic plane, and hence (after possibly increasing $R_0$) we have $(f(y_1)| f(x))_{f(y_2)}, (f(y_2)|f(x))_{f(y_1)}\geq b^{-1}$ by \cite[Lemma 2.2]{Benoist2017HarmonicQM}. Again by \cite[Lemma 2.2]{Benoist2017HarmonicQM}, we see that $(f(y_1)|f(y_2))_{f(x)}\leq L(y_1|y_2)_x\leq L a^{-1}\log\frac{4}{\theta}$. Then $\measuredangle_{f(x)}(f(y_1), f(y_2))\geq \left(\frac{\theta}{4}\right)^{Lb/a}=\varepsilon$, as desired.
\end{proof}
\subsection{Organization and a brief outline}
Here we briefly describe the contents of each section in the paper.
\par In \S\ref{sec:deforming}, we show that any weakly non-collapsing Lipschitz map can be deformed to a smooth weakly non-collapsing map with bounds on the first two derivatives. This is achieved by using the same argument as in \cite[\S 3]{tosic}, that is in turn a slight generalization of the argument of Benoist--Hulin \cite[\S 2]{Benoist2017HarmonicQM}. In particular, here we merely verify that the property of being weakly non-collapsing is preserved under finite distance deformations (although the size function is not preserved). This is an important step, as the proofs of both Theorem \ref{thm:main-qia-omega} and Theorem \ref{thm:main-general} depend on computations of the Laplacian of the distance function, using the classical computation of Schoen--Yau \cite{SCHOEN1979361}. For this we need the underlying maps to be at least $C^2$, and moreover we need control on the tension field of the map that we are trying to deform to a harmonic map.
\par In \S\ref{sec:generalized-interior-estimate} we prove Theorem \ref{thm:generalized-interior-estimate}. The main technical result in this section is Lemma \ref{lm:fundamental-inequality}, that easily implies Theorem \ref{thm:generalized-interior-estimate}, and that we believe is of independent interest. Lemma \ref{lm:fundamental-inequality} is a more precise quantitative version of the ``interior estimate'' of \cite[\S 4]{Benoist2017HarmonicQM}. The proof of Theorem \ref{thm:generalized-interior-estimate} boils down to the observation that since $\mathrm{dist}(f_n(x_n), h_n(\cdot))$ is a subharmonic function, we have 
\begin{align*}
    \int_{\partial B_{R_n}(x_n)} \left(\mathrm{dist}(f_n(x_n), h_n(y)) - \mathrm{dist}(f_n(x_n), h_n(x_n))\right) d\sigma_{x_n, R_n}(y)\geq 0,
\end{align*}
followed by an estimate of the integrand on the left-hand side in the regime where $\mathrm{dist}(f_n(x_n), h_n(x_n))\to\infty$ as $n\to\infty$, and reach a contradiction, along the lines of \cite[\S 4]{Benoist2017HarmonicQM}. This section is the heart of the paper, and a more detailed outline can be found at the start of \S\ref{sec:generalized-interior-estimate}.
\par In \S\ref{sec:weakly-non-collapsing} we derive Theorem \ref{thm:main-qia-omega} from Theorem \ref{thm:generalized-interior-estimate}. Given the generalized interior estimate, this is similar to the arguments in \cite{Benoist2017HarmonicQM} or \cite{tosic}. The idea is to take an exhaustive sequence of nested balls $B_1\subset B_2\subset ...\subset X$, and let $h_n:B_n\to Y$ be the harmonic map that agrees with $f$ on $\partial B_n$. We then extract a limit of the $h_n$. Using Theorem \ref{thm:generalized-interior-estimate}, in combination with an appropriate boundary estimate \cite[Proposition 3.7]{Benoist2017HarmonicQM} (along with control on the second derivative of $f$, obtained in \S\ref{sec:deforming}), we show that $\sup_n\sup_{B_n}\mathrm{dist}(f, h_n)<\infty$. Given this bound, the Arzela--Ascoli theorem combined with some classical results on harmonic maps (namely Schauder estimates \cite{petersen} and Cheng's lemma \cite{cheng}), allows us to extract a limit of $h_n$, that gives the desired harmonic map at a finite distance from $f$. 
\par In \S\ref{sec:nearest-point} we show Theorem \ref{thm:main-general}. The overall strategy is similar to the proof of Theorem \ref{thm:main-qia-omega}. We still have an exhaustive sequence of nested balls $B_n$, with harmonic maps $h_n:B_n\to X$, and wish to prove $\sup_n\sup_{B_n}\mathrm{dist}(r, h_n)<\infty$. The proof of this bound is again naturally divided into two pieces: one follows from Theorem \ref{thm:generalized-interior-estimate} and the fact that $r$ is uniformly non-collapsing in a neighbourhood of the convex set $C$ (which follows from admissibility), and the other follows from the arguments in the previous paper of the author \cite[\S 4]{tosic}.
\par Finally in \S\ref{sec:admissible} we show Theorem \ref{thm:main-admissible}. We give here a brief outline of the proof. Firstly, it is easy to see that the only way admissibility can fail is along a sequence of points $x_i$ converging to the boundary at infinity $\partial_\infty\mathbb{H}^n$. If this sequence converges to a point in $U$, admissibility holds. Assume therefore that the sequence converges to a point $\xi$ in $\partial U$. We rescale by isometries $A_i$ of $\mathbb{H}^n$ to map $x_i$ to a fixed compact set. Near the point $\xi$, there is a locally defined quasiconformal map $f$ that straightens $U$. By rescaling $f$ by $A_i$ and extracting a limit by standard compactness properties of quasiconformal maps, we see that $A_i^{-1}U$ converges to an open set, which provides the desired contradiction. This key argument is contained in Lemma \ref{lm:boundary-analysis} shown in \S\ref{subsec:bdry-analysis}. We note here that the significance of the condition on quasiconformal regularity of the boundary is related to the work of Tukia--V\"ais\"al\"a \cite{Tukia19829uasiconformalEF}. 
\subsection{Acknowledgements} This paper is an exposition of a part of my PhD thesis at the University of Oxford, and as such would not be possible without the help of my advisor Vladimir Markovi\'c. I would also like to thank my thesis committee consisting of Melanie Rupflin and Peter Topping for their thoughtful comments on this work. Finally, I would like to thank the anonymous referee for numerous useful suggestions that have improved the quality of this paper.
\subsection*{Notation}
We write $A\lesssim B$ when there exists a constant $C>0$ that depends only on the pinching constants and dimension of the relevant pinched Hadamard manifolds, such that $A\leq CB$. We similarly write $A\gtrsim B$ when $B\lesssim A$, and $A\approx B$ when $A\lesssim B\lesssim A$. 
\par We collect below some pieces of notation that appear throughout the paper for the reader's convenience, 
\begin{itemize}
    \item[--] given a Riemannian manifold $M$, the distance function $\mathrm{dist}:M\times M\to\mathbb{R}_+=\{x\in\mathbb{R}: x\geq 0\}$ always refers to the path metric induced by the Riemannian metric on $M$,
    \item[--] we denote by $B_R(x)$ the ball of radius $R$ centered at $x$, under the metric given by $\mathrm{dist}$,
    \item[--] we denote by $\sigma_{x,R}$ the harmonic measure on the sphere $\partial B_R(x)$, as seen from $x$, i.e. the measure defined by the equality 
    \begin{align*}
        h(x)=\int_{\partial B_R(x)} h(y)d\sigma_{x,R}(y)
    \end{align*}
    for all bounded harmonic functions $h:B_R(x)\to \mathbb{R}$,
    \item[--] when $X$ is a pinched Hadamard manifold, we denote by $\partial_\infty X$ the visual boundary at infinity of $X$,
    \item[--] for $x, y\in X\cup\partial_\infty X$, we denote by $[x,y]$ the geodesic joining $x$ and $y$,
    \item[--] for $a\in X, b,c\in X\cup\partial_\infty X\setminus\{a\}$, we denote by $\measuredangle_a(b,c)$ the angle at $a$ between the geodesics $[a,b]$ and $[a, c]$,
    \item[--] for $x\in X, \xi\in X\cup\partial_\infty X\setminus\{a\}$ and $\theta>0$, we denote by $\mathrm{Cone}(x\xi, \theta)$ the set of points $y\in X\cup\partial_\infty X$ such that $\measuredangle_x(\xi, y)<\theta$,
    \item[--] we denote by $\mathbb{H}^n$ the $n$-dimensional hyperbolic space, and by $\partial_\infty\mathbb{H}^n=\mathbb{S}^{n-1}$ the $(n-1)$-dimensional sphere at infinity,
    \item[--] we denote by $\norm{f}_\infty$ the supremum of some function $f$ (if $f$ is a section of some vector bundle equipped with a natural metric, we still denote by $\norm{f}_\infty$ the supremum of the norm of $f$).
\end{itemize}
\section{Preliminaries on the geometric analysis of harmonic maps}
Here we collect some estimates on harmonic maps between pinched Hadamard manifolds. Our first result is due to Cheng \cite[equation (2.9)]{cheng} (a simplified version is stated in \cite[Lemma 3.4]{Benoist2017HarmonicQM}). Denote by $B_R(x)$ the metric ball of radius $R$ centered at $x$ belonging to some metric space.
\begin{lemma}[Cheng's lemma]\label{lm:cheng}
    Let $M, N$ be Hadamard manifolds with sectional curvatures between $-b^2$ and $0$. Then for any $R>\varepsilon>0$, there exists a constant $C$ that depends only on $\varepsilon, b, \dim M, \dim N$, such that for any harmonic map $h:B_R(x)\to N$ with $x\in M$, we have 
    \begin{align*}
        \norm{Dh}_{L^\infty(B_{R-\varepsilon}(x))}\leq C\;\mathrm{diam}\left(h\left(B_R(x)\right)\right).
    \end{align*}
\end{lemma}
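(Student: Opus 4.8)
The plan is to follow Cheng's original Schwarz-type gradient estimate, which proceeds by applying the Bochner formula to the energy density $e(h) = \tfrac{1}{2}|Dh|^2$ and running a maximum-principle argument against a suitable cutoff. First I would recall the Bochner–Eells–Sampson formula for a harmonic map $h:B_R(x)\to N$: since $N$ has nonpositive curvature and $M$ has Ricci curvature bounded below (here by $-(m-1)b^2$ with $m=\dim M$, a consequence of the sectional curvature bound $\geq -b^2$), one gets the differential inequality
\begin{align*}
    \Delta e(h) \geq -C_1\, e(h),
\end{align*}
where $C_1$ depends only on $b$ and $\dim M$ — the curvature term from $N$ has the favorable sign and is discarded. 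The point is that $e(h)$ is a nonnegative subsolution of a linear elliptic equation with coefficients controlled purely by $M$'s geometry.

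Next I would introduce the standard auxiliary function. Fix a point $p\in B_{R-\varepsilon}(x)$ where we want the bound, let $\rho(\cdot) = \mathrm{dist}(x,\cdot)$, and on the ball $B_{R'}(x)$ with, say, $R' = R - \varepsilon/2$, consider $F = (R'^2 - \rho^2)^2\, e(h)$, which vanishes on $\partial B_{R'}(x)$ and hence attains an interior maximum at some point $q$. At $q$ one has $\nabla \log F = 0$ and $\Delta \log F \leq 0$; expanding $\Delta \log F = \Delta\log\big((R'^2-\rho^2)^2\big) + \Delta\log e(h)$ and using the Laplacian comparison theorem for $\rho$ (which needs only $\mathrm{Ric}_M \geq -(m-1)b^2$, again controlled by $b$ and $\dim M$) to bound $\Delta \rho \leq (m-1)b\coth(b\rho)$ and $|\nabla\rho|=1$, together with the Bochner inequality rewritten as $\Delta\log e(h) \geq -C_1 - |\nabla\log e(h)|^2$, one obtains an algebraic inequality at $q$ of the form
\begin{align*}
    (R'^2-\rho(q)^2)^2\, e(h)(q) \leq C_2\, R'^{\,?}
\end{align*}
— more precisely a bound on $F(q)$ by a constant depending only on $\varepsilon, b, R'$ (equivalently on $\varepsilon, b, R$) and the dimensions, after absorbing the gradient cross-terms via Cauchy–Schwarz. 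Since $F(p) \leq F(q)$ and $R'^2 - \rho(p)^2 \geq (R-\varepsilon)\cdot(\varepsilon/2)$ is bounded below in terms of $\varepsilon$ and $R$, this yields $e(h)(p)\leq C_3$, i.e. a pointwise bound on $|Dh|$ over $B_{R-\varepsilon}(x)$, with $C_3$ depending only on $\varepsilon, b, \dim M, \dim N$.

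Finally, to get the stated inequality with $\mathrm{diam}(h(B_R(x)))$ on the right-hand side rather than an absolute constant, I would run the above argument after rescaling the target metric, or more simply observe that the maximum-principle argument actually controls $e(h)(q)$ by $C_4/(R'^2-\rho(q)^2)^2$ times a quantity bounded by $\mathrm{diam}(h(B_{R'}(x)))^2$ — this comes from comparing $e(h)$ against the function $u(\cdot) = \mathrm{dist}_N(h(\cdot), h(x))^2$, which is subharmonic on the domain because $N$ has nonpositive curvature and $h$ is harmonic, is bounded by $\mathrm{diam}(h(B_R(x)))^2$, and can be fed into the cutoff estimate; tracking this dependence through the algebra replaces the absolute constant by $C\cdot\mathrm{diam}(h(B_R(x)))$. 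Taking square roots gives the claim.

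The main obstacle I anticipate is bookkeeping the dependence of constants: one must check carefully that every curvature term entering the Bochner formula and the Laplacian comparison is controlled solely by $b$ and $\dim M, \dim N$ (in particular that the target curvature only ever enters with a sign that lets us drop it), and that the cutoff-function exponents are chosen so the gradient cross-terms from $\Delta\log F$ at the maximum point can be absorbed without generating a dependence on anything else. The geometric comparison input — Laplacian comparison for $\rho$ and convexity of $\mathrm{dist}_N(h(\cdot),h(x))^2$ — is standard, so the real work is purely in the estimate's constant-tracking, which is why I would not grind through it here and instead cite Cheng \cite{cheng} for the precise computation.
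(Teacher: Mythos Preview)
The paper does not supply its own proof of this lemma; it is quoted as a known result with citations to Cheng \cite[equation (2.9)]{cheng} and Benoist--Hulin \cite[Lemma 3.4]{Benoist2017HarmonicQM}. Your sketch is exactly the outline of Cheng's original Bochner-plus-cutoff argument, so it matches what the paper invokes; the one soft spot is your explanation of how $\mathrm{diam}(h(B_R(x)))$ enters the right-hand side --- in Cheng's actual computation this comes from building a convex function of $h$ (essentially your $u$) multiplicatively into the test function $F$, not from a separate comparison afterward --- but since you already plan to cite \cite{cheng} for the precise algebra, this is not a gap.
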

Our second result follows from Schauder elliptic estimates \cite[Theorem 70, pp. 303]{petersen} for linear elliptic operators of second order. We want to apply these results to harmonic maps, that are solutions to a second order semilinear elliptic equation, so a slight modification is required. This modification is well-known, but we include a brief proof for completeness.
\begin{theorem}[Nonlinear Schauder elliptic estimates]\label{thm:schauder}
    Let $M, N$ be pinched Hadamard manifolds, and let $\Omega_0\subset\Omega\subset M$ be open sets with compact closures, such that $\bar{\Omega}_0\subset\Omega_1$. Suppose $h:\Omega\to N$ is a harmonic map with bounded image. Then for any $\alpha\in(0,1)$, we have 
    \begin{align*}
        \norm{h}_{C^{2,\alpha}(\Omega_0)}\leq C=C\left(\Omega, \Omega_0, N, \mathrm{diam}\left(h\left(\Omega\right)\right), \alpha\right)
    \end{align*}
\end{theorem}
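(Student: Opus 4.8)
The plan is the standard elliptic bootstrap for harmonic maps; since a harmonic map into a nonpositively curved target is automatically smooth, the only content is to make the constants effective. Fix $\alpha\in(0,1)$. I would first localise: as $\bar\Omega_0$ is compact and contained in $\Omega$, cover it by finitely many geodesic balls whose concentric doubles have compact closure in $\Omega$, so that it suffices to bound $\norm{h}_{C^{2,\alpha}}$ on each such ball; fix one, say $B$, with $B\subset\subset B'\subset\subset 2B\subset\subset\Omega$. On $2B$ work in a fixed smooth chart of $M$ (normal coordinates at the centre), in which $\Delta_M$ and the (inverse) metric of $M$ have coefficients bounded in every $C^j$-norm in terms of $\Omega$ and the pinching and dimension of $M$. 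Since $h(\Omega)$ has finite diameter its closure is compact (Hopf--Rinow), hence contained in a geodesic ball $B^N$ of $N$ of radius $\mathrm{diam}(h(\Omega))+1$ about $h(x_0)$ for a fixed $x_0\in\Omega_0$; in normal coordinates of $N$ on $B^N$ (a diffeomorphism, as $N$ is Hadamard) the map $h$ becomes a tuple $(h^1,\dots,h^m)$ of functions on $2B$, and the metric of $N$, its Christoffel symbols $\Gamma^\gamma_{\alpha\beta}$, and their first derivatives are bounded on $B^N$ by constants depending only on $N$ and $\mathrm{diam}(h(\Omega))$. In these coordinates harmonicity of $h$ becomes the \emph{semilinear} elliptic system
\[
\Delta_M h^\gamma \;=\; -\,g_M^{ij}(x)\,\Gamma^\gamma_{\alpha\beta}\!\big(h(x)\big)\,\partial_i h^\alpha\,\partial_j h^\beta \;=:\; F^\gamma .
\]

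For the first bootstrap step I would invoke Cheng's lemma (Lemma \ref{lm:cheng}), applicable since pinched Hadamard manifolds have sectional curvature in $[-b^2,-a^2]\subset[-b^2,0]$: it gives $\norm{Dh}_{L^\infty(2B)}\lesssim\mathrm{diam}(h(\Omega))$, with implied constant depending on $B$, hence on $\Omega$ and $\Omega_0$. Since $|F^\gamma|\lesssim\norm{\Gamma}_{L^\infty(B^N)}\,\norm{Dh}_{L^\infty(2B)}^2$, the right-hand side $F^\gamma$ lies in $L^\infty(2B)$ with a bound of the asserted type, and $h^\gamma$ now solves the \emph{linear} equation $\Delta_M h^\gamma=F^\gamma$. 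Interior $L^p$ estimates for second-order elliptic operators then give $h\in W^{2,p}(B')$ for every $p<\infty$, with $\norm{h}_{W^{2,p}(B')}$ controlled by $\norm{F}_{L^\infty(2B)}$ and $\norm{h}_{L^\infty(2B)}\le\mathrm{diam}(h(\Omega))$ (the zeroth-order term entering since the $h^\gamma$ need not have zero mean). Choosing $p$ large and applying the Sobolev--Morrey embedding $W^{2,p}\hookrightarrow C^{1,\beta}$, with $\beta=\beta(p)$ as large as we like below $1$, yields a quantitative bound on $\norm{h}_{C^{1,\beta}(B')}$.

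With $h\in C^{1,\beta}(B')$ in hand the right-hand side improves: $x\mapsto g_M^{ij}(x)\,\Gamma^\gamma_{\alpha\beta}(h(x))$ is the product of a smooth function of $x$ with the composition of the smooth $\Gamma$ and the $C^{1,\beta}$ (hence $C^{0,\beta}$) map $h$, so it is $C^{0,\beta}$, and $\partial_i h^\alpha\in C^{0,\beta}$; hence $F^\gamma\in C^{0,\beta}(B')$ with norm controlled by $\norm{h}_{C^{1,\beta}(B')}$ and the geometry of $M,N$. Taking $\beta\ge\alpha$ and applying the interior Schauder estimate \cite[Theorem 70]{petersen} to $\Delta_M h^\gamma=F^\gamma$ gives
\[
\norm{h}_{C^{2,\alpha}(B)}\;\lesssim\;\norm{F}_{C^{0,\alpha}(B')}+\norm{h}_{C^{0}(B')},
\]
and the right-hand side is bounded by a constant depending only on $\Omega,\Omega_0,N,\mathrm{diam}(h(\Omega))$ and $\alpha$. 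Summing over the finite cover of $\bar\Omega_0$ completes the argument.

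I do not expect a genuine obstacle --- this is the textbook bootstrap --- but the step requiring the most care is the passage from the \emph{semilinear} system to a \emph{linear} equation to which \cite[Theorem 70]{petersen} applies: the nonlinearity $F$ grows quadratically in $Dh$, so one must first produce an $L^\infty$ gradient bound via Cheng's lemma to place $F$ in $L^\infty$, and then run one round of $L^p$/Sobolev bootstrapping to reach $C^{1,\alpha}$ (so that $F\in C^{0,\alpha}$) before the linear Schauder estimate becomes available. A second point to watch is the bookkeeping of the target geometry: since $h(\Omega)$ only has bounded diameter, one must fix a geodesic ball of $N$ containing its closure and make sure that all bounds on $\Gamma^\gamma_{\alpha\beta}$, and (for the Schauder step) their H\"older seminorms along $h$, are taken over that ball, whose size is governed by $\mathrm{diam}(h(\Omega))$.
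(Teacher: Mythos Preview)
Your proposal is correct and follows essentially the same approach as the paper: write the harmonic map equation in coordinates on domain and target, use Cheng's lemma for the gradient bound, and then apply the linear Schauder estimate \cite[Theorem~70]{petersen}. If anything, your write-up is more careful than the paper's: the paper jumps directly from Cheng's $L^\infty$ gradient bound to the Schauder estimate, whereas you correctly insert the $L^p$/Sobolev--Morrey step to upgrade $F\in L^\infty$ to $F\in C^{0,\alpha}$ before Schauder applies.
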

\begin{proof}
    Let $B$ be a closed ball containing $h(\Omega)$ of radius comparable to $\mathrm{diam}\left(h(\Omega)\right)$. Let $\Psi:\mathrm{int}(B)\to \mathbb{R}^{\dim N}$ be an embedding with the properties 
    \begin{align*}
        \norm{D\Psi^{\pm 1}}_\infty, \norm{D^2\Psi^{\pm 1}}_\infty < c_0.
    \end{align*} 
    Such coordinates exist by \cite[Lemma 5.2]{Benoist2017HarmonicQM}, and here $c_0$ depends only on curvature bounds and dimension of $N$, and $\mathrm{diam}\left(h(\Omega)\right)$. We write the harmonic map equation in the coordinates given by $\Psi$. The Riemannian metric only depends on the first derivative of $\Psi^{-1}$, and the Christoffel symbols only on the first two derivatives of $\Psi^{-1}$, so in particular we obtain a pointwise bound on both. 
    \par Pick arbitrary local coordinates for $\Omega$. We denote by $\mu=1,2,...,\dim N$ indices that refer to coordinates on $N$, and by $i=1,2,...,\dim M$ indices that refer to coordinates on $M$. We also denote by $h^\mu_i$ the derivative in the $i$-th direction of the $\mu$-component of $h$, and by $\left(h_{ij}^\mu\right)_{i, j=1,2,...,\dim M}$ the second derivative of the $\mu$-component. The harmonic map equation is 
    \begin{align*}
        \Delta\left(h^\mu\right)+g^{ij}h_i^\nu h_j^\eta\Gamma^\mu_{\nu\eta}=0,
    \end{align*}
    where $g_{ij}$ is the Riemannian metric on $M$, $g^{ij}$ is its inverse, and $\Gamma^\mu_{\nu\eta}$ are Christoffel symbols on $N$. Note that by Lemma \ref{lm:cheng}, we have a bound on the derivative of $h$. Since $\Gamma^{\mu}_{\nu\eta}$ is bounded, and since the Laplacian is elliptic, by the standard Schauder estimates \cite[Theorem 70, pp. 303]{petersen} we get a bound on the $C^{2,\alpha}$-norm of $h$. 
\end{proof}
\section{Deforming to smooth maps}\label{sec:deforming} Our aim here is to show that any weakly non-collapsing map can be deformed to a smooth weakly non-collapsing map, with control on the first two derivatives. Note that from \cite[Lemma 3.1]{tosic}, any Lipschitz map can be deformed to a smooth map with first two derivatives bounded. The following proposition is thus the aim of this section. 
\begin{proposition}\label{prop:qia-deform}
    Let $\mathcal{F}$ be a uniformly non-collapsing family with size function $\omega$, let $D>0$, and let $\tilde{\mathcal{F}}$ be a uniformly Lipschitz family of maps between pointed pinched Hadamard manifolds. Assume that for any $\tilde{f}:(X,x)\to (Y,y)$ in $\tilde{\mathcal{F}}$, there exists a map $f:(X,x)\to(Y,y)$ in $\mathcal{F}$, such that $\sup_X \mathrm{dist}(f, \tilde{f})<D$. Then $\tilde{\mathcal{F}}$ is uniformly non-collapsing with size function $\tilde{\omega}+2D$.
\end{proposition}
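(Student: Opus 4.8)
The plan is to verify the three requirements of Definition~\ref{dfn:uniformly-inner} for $\tilde{\mathcal{F}}$, with the new size function $\omega+2D$ (still sublinear, since adding a constant to $\omega$ changes neither $\omega(R)\to\infty$ nor $\omega(R)/R\to 0$); throughout, for each $\tilde f:(X,x)\to(Y,y)$ in $\tilde{\mathcal{F}}$ fix a companion $f:(X,x)\to(Y,y)$ in $\mathcal{F}$ with $\sup_X\mathrm{dist}(f,\tilde f)<D$. Uniform Lipschitz control is assumed of $\tilde{\mathcal{F}}$, and since $f$ and $\tilde f$ have the same domain and range, the pinching constants occurring in $\tilde{\mathcal{F}}$ are exactly those occurring in $\mathcal{F}$, hence uniformly bounded; in particular the harmonic measures $\sigma_{x,R}$ on $\partial B_R(x)$ are the same for $f$ and $\tilde f$. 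For condition~(1): pointwise on $\partial B_R(x)$ one has $\mathrm{dist}(\tilde f(x),\tilde f(y))\ge\mathrm{dist}(f(x),f(y))-2D$, so integrating against the probability measure $\sigma_{x,R}$ and invoking condition~(1) for $\mathcal{F}$ gives $\int_{\partial B_R(x)}\mathrm{dist}(\tilde f(x),\tilde f(y))\,d\sigma_{x,R}(y)\ge cR-2D$, which exceeds $\tfrac{c}{2}R$ once $R>\max(R_0,4D/c)$.

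The real content is condition~(2). It suffices to establish the following transfer statement: for every $\theta_1>0$ there are $\theta>0$ and $R_1>0$ so that, for all $\tilde f\in\tilde{\mathcal{F}}$ with companion $f$, all $R>R_1$, all $\xi\in\partial_\infty Y$ and all $y\in\partial B_R(x)$,
\begin{align*}
    \measuredangle_{\tilde f(x)}(\xi,\tilde f(y))<\theta \ \text{ and } \ \mathrm{dist}(\tilde f(x),\tilde f(y))\ge\omega(R)+2D \ \Longrightarrow\ \measuredangle_{f(x)}(\xi,f(y))<\theta_1 \ \text{ and } \ \mathrm{dist}(f(x),f(y))\ge\omega(R).
\end{align*}
Granting this, fix $\varepsilon>0$, let $\theta_1$ and $R_0'$ be the parameters provided by condition~(2) for $\mathcal{F}$ and this $\varepsilon$, apply the transfer statement to this $\theta_1$ to obtain $\theta$ and $R_1$, and set $R_0=\max(R_0',R_1)$: then for $R>R_0$ the set figuring in condition~(2) for $\tilde f$ (with size function $\omega+2D$) is contained, by the transfer statement, in the set figuring in condition~(2) for $f$ (with size function $\omega$ and angle $\theta_1$), and the latter has $\sigma_{x,R}$-measure $<\varepsilon$ by condition~(2) for $\mathcal{F}$.

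To prove the transfer statement, the distance half is once more the triangle inequality. For the angle half, the point is that for a far-away target a thin cone condition is equivalent to a large lower bound on a Gromov product, and that Gromov products change by at most $\mathrm{dist}$ under bounded moves of either entry. Precisely, by the standard comparison estimates for pinched Hadamard manifolds relating visual angles to Gromov products (cf.\ \cite[Lemma 2.1]{Benoist2017HarmonicQM}; we extend the Gromov product $(\cdot|\cdot)_\cdot$ to boundary points in the usual way), there are constants $\kappa,\kappa',C_0>0$ depending only on the pinching data with
\[
    (\xi|z)_a\ \ge\ \min\!\big(\kappa\,\mathrm{dist}(a,z),\ \kappa'\log\tfrac1{\measuredangle_a(\xi,z)}\big)-C_0
    \qquad\text{and}\qquad
    \measuredangle_a(\xi,z)\ \le\ C_0\,e^{-\kappa(\xi|z)_a}
\]
for all points $a\ne z$ and $\xi\in\partial_\infty$. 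Applying the first inequality at $a=\tilde f(x),\,z=\tilde f(y)$ and using $\mathrm{dist}(\tilde f(x),\tilde f(y))\ge\omega(R)+2D$ — which for $R$ large exceeds $\kappa^{-1}\kappa'\log\tfrac1\theta$ since $\omega(R)\to\infty$ — yields $(\xi|\tilde f(y))_{\tilde f(x)}\ge\kappa'\log\tfrac1\theta-C_0$. Moving the basepoint from $\tilde f(x)$ to $f(x)$ and the target from $\tilde f(y)$ to $f(y)$ costs at most $D$ each, so $(\xi|f(y))_{f(x)}\ge\kappa'\log\tfrac1\theta-C_0-2D$; plugging this into the second inequality gives $\measuredangle_{f(x)}(\xi,f(y))\le C_0 e^{\kappa(C_0+2D)}\,\theta^{\kappa\kappa'}$, which is $<\theta_1$ for $\theta$ small enough. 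Taking $R_1$ large enough that all the ``$R$ large'' requirements above (and the hypotheses under which the comparison estimates apply) hold — possible uniformly in $\tilde{\mathcal{F}}$ because every constant depends only on the uniform pinching data and $\omega$ is fixed for the family — finishes the transfer statement, hence the proposition.

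The step I expect to be the main obstacle is this angle transfer, and in particular resisting the temptation to argue that $\tilde f(y)$ is uniformly close to the geodesic ray $[\tilde f(x),\xi]$: in negative curvature two geodesics issuing from a point at a fixed positive angle diverge exponentially, so points of a thin cone are \emph{not} uniformly near its central ray, and the naive estimate loses a factor growing with $R$. Passing instead through the Gromov product — equivalently, through the point at which the two geodesics cease to fellow-travel — avoids this, and what remains is only the bookkeeping of keeping every constant pinching-dependent and of meeting the finitely many ``$R$ sufficiently large'' conditions simultaneously and uniformly over the family.
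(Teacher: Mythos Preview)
Your proof is correct and follows the same overall skeleton as the paper's: condition~(1) via the triangle inequality, then for condition~(2) a ``transfer statement'' showing that the bad set for $\tilde f$ with size function $\omega+2D$ is contained in the bad set for $f$ with size function $\omega$.

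Where you diverge is in the mechanism for the angle transfer. The paper isolates a purely geometric lemma, Proposition~\ref{proposition:moving-cone}, asserting that the $D$-neighbourhood of a sufficiently thin truncated cone with apex $x$ is contained in a prescribed cone with apex $y$ whenever $\mathrm{dist}(x,y)\le D$; it proves this by comparison estimates and then applies it with $x=\tilde f(x)$, $y=f(x)$. You instead go straight through Gromov products: a small angle together with a large distance forces $(\xi\,|\,\tilde f(y))_{\tilde f(x)}$ to be large, Gromov products move by at most $D$ under a $D$-move of either argument, and a large Gromov product forces a small angle back. This is a genuinely different packaging of the same comparison geometry---your argument is more direct for this proposition and avoids introducing an auxiliary lemma, while the paper's formulation has the advantage that Proposition~\ref{proposition:moving-cone} is reused verbatim in \S\ref{subsec:prop-interior-estimate}. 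Your closing remark about why the naive ``close to the central ray'' estimate fails is exactly the point: passing through the Gromov product (equivalently, the fellow-travelling time of the two geodesics) is what makes the constants independent of $R$.
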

\begin{proof}
    To check Definition \ref{dfn:uniformly-inner}(1), we write, for any $\tilde{f}\in\mathcal{F}$,
    \begin{align*}
        \int_{\partial B_R(x)}\mathrm{dist}(\tilde{f}(x), \tilde{f}(y))d\sigma_{x,R}(y)&\geq \int_{\partial B_R(x)}\left(\mathrm{dist}(f(x), f(y))-2D\right)d\sigma_{x, R}(y)\\
        &\geq cR-2D\geq \frac{c}{2}R
    \end{align*}
    for $R>\max(R_0, 4c^{-1}D)$, where $R_0, c$ are constants from Definition \ref{dfn:qia-omega}(1) for $\mathcal{F}$, and where $f\in\mathcal{F}$ is such that $\mathrm{dist}(f,\tilde{f})\leq D$. 
    \par It remains to show Definition \ref{dfn:uniformly-inner}(2). Fix an arbitrary $\varepsilon>0$, and let $\theta, R_0$ be as in Definition \ref{dfn:uniformly-inner}(2) for $\mathcal{F}$. We will make use of the following proposition on cones in negative curvature, shown in the next subsection.
    \begin{proposition}\label{proposition:moving-cone}
        For any $D, \theta>0$, there exist $\hat{D}, \hat{\theta}>0$ such that for any two points $x, y\in X$ at a distance at most $D$ and any $\xi\in\partial_\infty X$, we have 
        \begin{align*}
            N_D\left(\mathrm{Cone}(x\xi, \hat{\theta})\setminus B_{\hat{D}}(x)\right)\subseteq \mathrm{Cone}(y\xi, \theta).
        \end{align*}
    \end{proposition}
    We choose $\tilde{\theta}, \tilde{R}_0>0$ as in Proposition \ref{proposition:moving-cone}, so that 
    \begin{align*}
        N_D(\mathrm{Cone}(x\xi, \tilde{\theta})\setminus B_{\tilde{R}_0}(x))\subset\mathrm{Cone}(y\xi, \theta),
    \end{align*}
    for any $x, y\in X$ of distance at most $D$, and any $\xi\in\partial_\infty X$.
    \par Let $\tilde{f}\in\tilde{\mathcal{F}}$ now be arbitrary, and let $f\in\mathcal{F}$ be such that $\mathrm{dist}(f, \tilde{f})\leq D$. By choice of $\tilde{\theta},\tilde{R}_0$, we have 
    \begin{align*}
        N_D\left(\mathrm{Cone}(\tilde{f}(x)\xi, \tilde{\theta})\setminus B_{\tilde{R}_0}(\tilde{f}(x))\right)\subseteq\mathrm{Cone}(f(x)\xi, \theta).
    \end{align*}
    We now have, for $R$ large enough such that $\omega(R)>\tilde{R}_0-2D$, 
    \begin{align*}
        N_D\left(\mathrm{Cone}(\tilde{f}(x)\xi, \tilde{\theta})\setminus B_{\omega(R)+2D}(\tilde{f}(x))\right)&\subseteq N_D\left(\mathrm{Cone}(\tilde{f}(x)\xi, \tilde{\theta})\setminus B_{\tilde{R}_0}(\tilde{f}(x))\right) \\
        &\subseteq \mathrm{Cone}(f(x)\xi, \theta),
    \end{align*}
    and hence 
    \begin{align}
        N_D\left(\mathrm{Cone}(\tilde{f}(x)\xi, \tilde{\theta})\setminus B_{\omega(R)+2D}(\tilde{f}(x))\right)&\subseteq \mathrm{Cone}(f(x)\xi, \theta)\setminus B_{\omega(R)+D}(\tilde{f}(x)) \nonumber \\
        &\subseteq \mathrm{Cone}(f(x)\xi, \theta)\setminus B_{\omega(R)}(f(x)). \nonumber
    \end{align}
    Combined with the fact that $\tilde{f}^{-1}(S)\subset N_D(f^{-1}(S))$ for any $S\subseteq Y$, and Definition \ref{dfn:uniformly-inner}(2) for $f\in\mathcal{F}$, we see that 
    \begin{align*}
        \sigma_{x,R}\left(\{y\in\partial B_R(x):\measuredangle_{\tilde{f}(x)}(\xi, \tilde{f}(y))<\tilde{\theta}\text{ and }\mathrm{dist}(\tilde{f}(x), \tilde{f}(y))\geq\omega(R)+2D\}\right)<\varepsilon,
    \end{align*}
    for any $x\in X, \xi\in\partial_\infty X$ and $R$ sufficiently large (depending only on $\theta,R_0,\omega$). 
\end{proof}
\subsection{Moving the apex of a cone}
Here we show Proposition \ref{proposition:moving-cone}.
    We fix $D, \theta>0$. Let $\hat{D}$ (resp. $\hat{\theta}$) be an arbitrary positive constant, that we will freely increase (resp. decrease) over the course of the proof. By \cite[Proposition 5.4]{tosic}, it suffices to show
    \begin{align}\label{eq:prop-moving-cone-main-1}
        \mathrm{Cone}(x\xi, \hat{\theta})\setminus B_{\hat{D}}(x)\subseteq\mathrm{Cone}(y\xi, \theta).
    \end{align}
    \begin{remark}
        Note that in \cite{tosic}, the author works with the visual metric on $\partial_\infty X$, whereas here we are interested in the angle metric. It is classical that the two are H\"older equivalent, and the direction we need follows readily from Claim \ref{claim:def-angle} and \cite[\S 2.5]{bourdon1993actions}.
    \end{remark}
    \par Let $z\in \mathrm{Cone}(x\xi,\hat{\theta})\setminus B_{\hat{D}}(x)$ and let $w$ be the point on $x\xi$ closest to $z$. Our first assertion is that 
    \begin{align}\label{eq:footpoint-far}
        \mathrm{dist}(x, w)\geq \min\left(\hat{D}, a^{-1}\log\frac{1}{\hat{\theta}}\right)+O(1),
    \end{align}
    where $a>0$ is a constant such that $M$ has all sectional curvatures at most $-a^2$. By comparison with the hyperbolic plane for the triangle $xzw$, we see that 
    \begin{align}\label{eq:cmp-1}
        \mathrm{sinh}\left(a\mathrm{dist}(z, w)\right)\leq \sin\measuredangle_{x}(z, w)\sinh\left(a\mathrm{dist}(x, z)\right).
    \end{align}
    This in particular shows that 
    \begin{align}\label{eq:dist-estimate-sinh}
        \mathrm{dist}(z, w)\leq \max\left(0, \mathrm{dist}(x, z)+a^{-1}\log \measuredangle_x(z, w)\right) + O(1).
    \end{align}
    Therefore by the triangle inequality 
    \begin{align*}
        \mathrm{dist}(x, w)&\geq \mathrm{dist}(x, z)-\mathrm{dist}(z, w) \\ 
        &\geq\min\left(\mathrm{dist}(x, z), a^{-1}\log \frac{1}{\measuredangle_x(z,w)}\right)+O(1)
        \\ &\geq \min\left(\hat{D}, a^{-1}\log\frac{1}{\hat{\theta}}\right)+O(1),
    \end{align*}
    thus showing (\ref{eq:footpoint-far}). 
    \par Let $\delta$ be the Gromov constant of $X$ as a hyperbolic metric space. By (\ref{eq:footpoint-far}), since $\mathrm{dist}(x, y)\leq D$, by choosing $\hat{D}$ large enough and $\hat{\theta}$ small enough, we can arrange it so that $\mathrm{dist}(w, xy)>10\delta$. Thus, by considering the ideal triangle $x\xi y$, we see that $\mathrm{dist}(w, y\xi)\leq\delta$. Therefore \begin{align}\label{eq:diff-dist-bounded}\mathrm{dist}(z, y\xi)\leq \mathrm{dist}(z, x\xi)+\delta. \end{align} 
    Similarly to (\ref{eq:cmp-1}), by comparison to the hyperbolic plane, we see that 
    \begin{align*}
        \sinh(b\mathrm{dist}(z ,y\xi))\geq \sinh(b\mathrm{dist}(z, y))\sin\measuredangle_y(z,\xi)\gtrsim e^{b(\mathrm{dist}(x, z)-D)}\measuredangle_y(z, \xi).
    \end{align*}
    It follows from (\ref{eq:diff-dist-bounded}) that 
    \begin{align*}
        \measuredangle_y(z,\xi)\lesssim e^{b(\mathrm{dist}(z, x\xi)-\mathrm{dist}(x, z))},
    \end{align*}
    where we absorbed $e^{b(D+\delta)}$ into the implicit constant. Applying (\ref{eq:dist-estimate-sinh}), we get 
    \begin{align*}
        \measuredangle_y(z, \xi)&\lesssim \exp\left(\max\left(-b\mathrm{dist}(x, z), ba^{-1}\log\measuredangle_x(z, w)\right)\right)\\
        &\lesssim\exp\left(-\min\left(b\hat{D}, ba^{-1}\log\frac{1}{\hat{\theta}}\right)\right).
    \end{align*}
    By increasing $\hat{D}$ and decreasing $\hat{\theta}$ further, we can ensure that $\measuredangle_y(z, \xi)<\theta$. Since $z$ was arbitrary, and none of our constants or choices of $\hat{D}, \hat{\theta}$ depended on $z$, this concludes the proof of (\ref{eq:prop-moving-cone-main-1}).
\section{Generalized interior estimate}\label{sec:generalized-interior-estimate}
This section is devoted to proving Theorem \ref{thm:generalized-interior-estimate}, which follows from the technical Lemma \ref{lm:fundamental-inequality} below. 
\begin{lemma}\label{lm:fundamental-inequality}
    For any constants $a,b,r,\varepsilon,\delta>0$ and integer $n$, there exist positive real numbers $M, N$, such that the following holds. Suppose $X, Y$ are pinched Hadamard manifolds of dimension at most $n$ with pinching constants $-b^2\leq -a^2<0$. Let $x\in X$, and let $f:B_{r+\varepsilon}(x)\to Y$ and $h:B_{r+\varepsilon}(x)\to Y$ be a smooth and harmonic map, respectively, such that $\mathrm{dist}(h, f)$ achieves its maximum over $B_{r+\varepsilon}(x)$ at $x$. Then either 
    \begin{align}\label{eq:fundamental-inequality-1}
        \mathrm{dist}(h(x), f(x))\leq M \mathrm{diam}(f(B_r))+N,
    \end{align}
    or 
    \begin{align*}
        \int_{\partial B_r(x)} \min\left(a\rho(y), \log\frac{\pi}{\theta(y)}\right)d\sigma_{x,r}(y)\geq \frac{1}{2}\int_{\partial B_r(x)} a\rho(y)d\sigma_{x,r}(y)-\delta,
    \end{align*}
    where
    \begin{align*}
        \rho(y)=\mathrm{dist}(f(x), f(y))\text{ and }\theta(y)=\measuredangle_{f(x)}(h(x), f(y)).
    \end{align*}
\end{lemma}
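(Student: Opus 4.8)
The plan is to study the subharmonic function $u(y) := \mathrm{dist}(f(x), h(y))$ on $B_{r+\varepsilon}(x)$ and exploit the mean-value inequality at $x$. Since $u$ is subharmonic (as the composition of the convex distance-to-a-point function with the harmonic map $h$, in nonpositive curvature), we have
\begin{align*}
    u(x) \leq \int_{\partial B_r(x)} u(y)\, d\sigma_{x,r}(y),
\end{align*}
that is, $\int_{\partial B_r(x)} (u(y) - u(x))\, d\sigma_{x,r}(y) \geq 0$. Now $u(x) = \mathrm{dist}(f(x), h(x))$, and I want to compare $u(y) = \mathrm{dist}(f(x), h(y))$ against $\mathrm{dist}(f(x), f(y))$, using the hypothesis that $\mathrm{dist}(h, f)$ is maximized at $x$, so $\mathrm{dist}(h(y), f(y)) \leq \mathrm{dist}(h(x), f(x)) = u(x)$ for all $y \in \partial B_r(x)$.

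First I would set $r(x) = \mathrm{dist}(f(x), f(x)) = 0$ aside and write $\rho(y) = \mathrm{dist}(f(x), f(y))$, $\theta(y) = \measuredangle_{f(x)}(h(x), f(y))$ as in the statement, and also let $d(y) = u(x) = \mathrm{dist}(f(x), h(x))$ be the (constant in $y$) quantity we are trying to bound. The key geometric estimate is a lower bound for $u(y) = \mathrm{dist}(f(x), h(y))$ in terms of $d$, $\rho(y)$ and $\theta(y)$: since $\mathrm{dist}(h(y), f(y)) \leq d$, the point $h(y)$ lies in the ball of radius $d$ about $f(y)$, which in turn sits at distance $\rho(y)$ from $f(x)$ in a direction making angle $\theta(y)$ with the direction of $h(x)$. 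In a CAT($-a^2$) space, comparison with $\mathbb{H}^2$ (or the hyperbolic law of cosines) gives a lower bound of the shape
\begin{align*}
    \mathrm{dist}(f(x), h(y)) \geq \rho(y) - d - C\quad\text{when } a\rho(y) \lesssim \log\tfrac{\pi}{\theta(y)},
\end{align*}
i.e. when $f(y)$ is ``in front of'' $h(x)$ as seen from $f(x)$ relative to the scale $\rho(y)$, while always $\mathrm{dist}(f(x), h(y)) \geq 0$. Combined with $u(y) \leq \rho(y) + d$ (triangle inequality), the quantity $u(y) - u(x) = u(y) - d$ is then controlled by something like $\rho(y) - 2d - C$ on the ``good'' set where $\min(a\rho(y), \log\frac{\pi}{\theta(y)}) = a\rho(y)$, and by roughly $\rho(y) - C'$ on its complement; plugging into the mean-value inequality and splitting the integral over these two sets should yield: either $d$ is bounded by a multiple of $\int a\rho\, d\sigma$ plus a constant (which by the obvious bound $\rho(y) \leq \mathrm{diam}(f(B_r)) + \mathrm{dist}(f(x), f(y)) \leq$ (Lipschitz constant)$\cdot\, r$, or more simply $\rho(y) \leq 2\,\mathrm{diam} f(B_r)$ since $f(x), f(y) \in f(B_r)$, gives alternative \eqref{eq:fundamental-inequality-1}), or else the ``good'' set must carry at least half the mass of $a\rho$ up to $\delta$, which is exactly the second alternative.

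The main obstacle I expect is making the comparison-geometry lemma quantitatively sharp enough: I need the lower bound for $\mathrm{dist}(f(x), h(y))$ to degrade by exactly $d$ (not a multiple of $d$) when $\theta(y)$ is small relative to $e^{-a\rho(y)}$, and to kick in precisely at the threshold $a\rho(y) \asymp \log\frac{1}{\theta(y)}$ that appears in the statement — this is the place where the pinching lower bound $-a^2$ on curvature enters, via a $\delta$-hyperbolicity / thin-triangles estimate for the geodesic triangle $f(x)\, h(x)\, f(y)$ together with a tube/ball estimate around the geodesic $[f(x), f(y)]$. Concretely: a ball of radius $d$ around $f(y)$ projects to a segment of length $O(d) + O(1)$ near $f(y)$ on $[f(x), f(y)]$, so unless $h(y)$ is pushed toward $f(x)$, its distance to $f(x)$ is $\geq \rho(y) - d - O(1)$; and $h(y)$ can only get genuinely closer to $f(x)$ if the geodesic $[f(x), h(x)]$ (hence the direction $\theta(y)$) bends toward $[f(x), f(y)]$ within distance $O(1)$ for a parameter-length $\gtrsim \rho(y)$, which forces $\theta(y) \lesssim e^{-a\rho(y)}$. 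Handling the error terms $O(1)$ uniformly (they depend only on $a, b, n$) and absorbing them into $M, N, \delta$ is the bookkeeping that takes care; I would lean on \cite[Lemma 2.1, Lemma 2.2]{Benoist2017HarmonicQM} for the precise hyperbolic comparison inequalities relating angles, Gromov products and distances. Once that estimate is in hand, the rest is the dichotomy above plus elementary manipulation of the mean-value inequality.
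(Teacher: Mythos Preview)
Your overall strategy --- use subharmonicity of $\rho_h(y)=\mathrm{dist}(f(x),h(y))$, the mean-value inequality at $x$, and then estimate the integrand via hyperbolic comparison in a triangle with vertex $f(x)$ --- is indeed the skeleton of the paper's proof. But there is a genuine gap in the geometric estimate you outline.

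The angle $\theta(y)=\measuredangle_{f(x)}(h(x),f(y))$ involves $h(x)$, not $h(y)$, and nothing you have written ties the position of $h(y)$ to that angle. Your claimed bound ``$\mathrm{dist}(f(x),h(y))\ge \rho(y)-d-C$ when $a\rho(y)\lesssim \log\frac{\pi}{\theta(y)}$'' is just the triangle inequality $u(y)\ge\rho(y)-d$, which holds for every $y$ irrespective of $\theta(y)$; conversely the upper bound $u(y)\le\rho(y)+d$ can be attained however small $\theta(y)$ is. So the dichotomy you set up on the ``good'' versus ``bad'' set does not yield the asserted control on $u(y)-d$, and the heuristic in your last paragraph (``$h(y)$ can only get genuinely closer to $f(x)$ if $[f(x),h(x)]$ bends toward $[f(x),f(y)]$'') conflates $h(x)$ with $h(y)$. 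The lemmas \cite[Lemma 2.1, 2.2]{Benoist2017HarmonicQM} you plan to invoke are purely comparison-geometric and cannot by themselves link $h(y)$ to $h(x)$.

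What is missing is an analytic step that the paper carries out via Cheng's lemma and Claim~\ref{claim:supinf-convexity}: one first bounds $\|Dh\|_{L^\infty(B_r(x))}$ by a multiple of $\rho_h(x)+\|\rho_f\|_\infty$, then uses a subharmonic sup/inf estimate to get $\inf_{B_r}\rho_h\ge \rho_h(x)-C\|\rho_f\|_\infty$, and finally compares to $\mathbb{H}^2$ to obtain
\[
\sup_{y\in B_r(x)}\measuredangle_{f(x)}(h(x),h(y))\lesssim \frac{\rho_h(x)+\|\rho_f\|_\infty}{\sinh\!\big(a\rho_h(x)-Ca\|\rho_f\|_\infty\big)},
\]
which is (\ref{eq:visual-angle-bound}). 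Only after knowing that $h(y)$ is \emph{angularly} close to $h(x)$ as seen from $f(x)$ can one replace the angle $\tilde\theta(y)=\measuredangle_{f(x)}(h(y),f(y))$ --- which is the one that actually sits in the triangle $f(x)f(y)h(y)$ and hence in the deficiency estimate of Claim~\ref{claim:def-angle} --- by $\theta(y)$ up to an error absorbed into $\delta$. Without this ingredient your argument does not close; once you add it, what remains is essentially the paper's proof of (\ref{eq:int-deficiency}) followed by the passage from $\tilde\theta$ to $\theta$.
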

The proof of Lemma \ref{lm:fundamental-inequality} is a quantitative version of the proof of the ``interior estimate'' \cite[\S 4]{Benoist2017HarmonicQM}. We first outline the proof of Lemma \ref{lm:fundamental-inequality} briefly. We divide the outline into three steps.
\begin{enumerate}
    \item We first observe that $\mathrm{dist}(f(x), h(\cdot))$ is a subharmonic function, so in particular 
    \begin{align}\label{eq:outline-subh-int}
        \int_{\partial B_r(x)} \left(\mathrm{dist}(f(x), h(y))-\mathrm{dist}(f(x), h(x))\right)d\sigma_{x, r}(y)\geq 0.
    \end{align}
    The entirety of the proof of Lemma \ref{lm:fundamental-inequality} is estimating the integrand on the left-hand side under the assumption that $\mathrm{dist}(h(x), f(x))$ is very large.
    \item If $\mathrm{dist}(h(x), f(x))=:D$ is large enough, we have 
    \begin{gather}
        \inf_{y\in B_r(x)}\mathrm{dist}(f(x), h(y))\geq \frac{D}{2},\label{eq:outline-inf-dist} \\
        \sup_{y\in B_r(x)}\measuredangle_{f(x)}(h(x), h(y))\leq C\exp\left({-\frac{a}{2}D}\right).\label{eq:outline-sup-angle}
    \end{gather}
    Inequality (\ref{eq:outline-inf-dist}) follows from the fact that $\mathrm{dist}(f(x), h(\cdot))$ is a positive subharmonic function defined on $B_r(x)$ that takes the value $D$ at the center $x$, and is bounded above by $D+2\mathrm{diam}(f(B_r(x)))$, along with a gradient bound on $\mathrm{dist}(f(x), h(\cdot))$ that follows from Cheng's lemma (see Claim \ref{claim:supinf-convexity} and (\ref{eq:inf-rhoh-estimate})). For $D$ large enough, $D^{-1}\mathrm{diam}(f(B_r(x)))$ is very small, which forces $\inf_{y\in B_r(x)}\mathrm{dist}(f(x), h(y))$ to be comparable to $D$. Inequality (\ref{eq:outline-sup-angle}) then follows from (\ref{eq:outline-inf-dist}) and Cheng's lemma.
    \item We then have the chain of inequalities 
    \begin{gather*}
        \mathrm{dist}(f(x), h(y))-\mathrm{dist}(f(x), h(x))\leq \mathrm{dist}(f(x), h(y)) - \mathrm{dist}(f(y), h(y))\\
        \leq 2a^{-1}\log\frac{1}{\measuredangle_{f(x)}(f(y), h(y))} - \mathrm{dist}(f(x), f(y)) + O(1).
    \end{gather*}
    The inequality in the first line follows from the fact that \[\mathrm{dist}(f(x), h(x))=\sup_{B_r(x)}\mathrm{dist}(h, f),\] and the inequality in the second line follows from the comparison of the triangle with vertices $f(x), f(y), h(y)$ with the hyperbolic plane. Plugging the final inequality into (\ref{eq:outline-subh-int}) along with the bound (\ref{eq:outline-sup-angle}) yields Lemma \ref{lm:fundamental-inequality}.
\end{enumerate}
\par We first show Theorem \ref{thm:generalized-interior-estimate} assuming Lemma \ref{lm:fundamental-inequality} below, and then we show Lemma \ref{lm:fundamental-inequality} in \S\ref{subsec:proof-of-lm-fundamental-inequality}.
\begin{proof}[Proof of Theorem \ref{thm:generalized-interior-estimate}]
    We assume that $\sup_{B_{R_n}(x_n)}\mathrm{dist}(f_n, h_n)\to\infty$, possibly after passing to a subsequence. Fix a large constant $R\geq 1$, that we will choose later, and pass to a subsequence such that $R_n>R$ for all $n$. Our proof strategy is to apply Lemma \ref{lm:fundamental-inequality} to $B_R(x_n)$. 
    \par Since $\sup_{B_R(x_n)}\mathrm{dist}(f_n, h_n)\to\infty$, we eventually have violation of (\ref{eq:fundamental-inequality-1}). Thus for large $n$, we have 
    \begin{align*}
        \int_{\partial B_R(x_n)} \min\left(a\rho_n(y), \log\frac{\pi}{\theta_n(y)}\right)d\sigma_{x_n, R}(y)\gtrsim R,
    \end{align*}
    where 
    \begin{align*}
        \rho_n(y)=\mathrm{dist}(f_n(x_n), f_n(y))\text{ and }\theta_n(y)=\measuredangle_{f_n(x_n)}(h_n(x_n), f_n(y)).
    \end{align*}
    Note that in this proof, we suppress the dependence of implicit constants on the constants of $\mathcal{F}$ coming from Definition \ref{dfn:uniformly-inner}. We observe that since $f_n$ are uniformly Lipschitz, we have $\rho_n(y)\lesssim R$.
    \par Let $\omega$ be the size function of $\mathcal{F}$. We now let \[S_n=\{y\in\partial B_R(x_n):\theta_n(y)\leq \pi e^{-a\omega(R)}\text{ and }\rho_n(y)\geq \omega(R)\}.\] Then 
    \begin{align*}
        R&\lesssim \int_{\partial B_R(x_n)} \min\left(a\rho_n(y), \log\frac{\pi}{\theta_n(y)}\right)d\sigma_{x_n, R}(y)\\ &\leq \int_{S_n} Rd\sigma_{x_n, R}+\int_{\partial B_R(x_n)\setminus S_n} a\omega(R) d\sigma_{x_n, R}\leq R\sigma_{x_n, R}(S_n)+a\omega(R).
    \end{align*}
    By sublinearity of $\omega$, we have $\sigma_{x_n, R}(S_n)\gtrsim 1$. However, observe that 
    \begin{align*}
        S_n=f_n^{-1}\left(\mathrm{Cone}\left(f_n(x_n)h_n(x_n), e^{-a\omega(R)}\right)\setminus B_{\omega(R)}(f_n(x_n))\right)\cap\partial B_R(x_n).
    \end{align*}
    Thus for $R$ large enough, depending on the constants of $\mathcal{F}$, we reach a contradiction with Definition \ref{dfn:uniformly-inner}(2) for $\mathcal{F}$.
\end{proof}
\subsection{Proof of Lemma \ref{lm:fundamental-inequality}} \label{subsec:proof-of-lm-fundamental-inequality}

\begin{figure}
    \includegraphics[width=\textwidth]{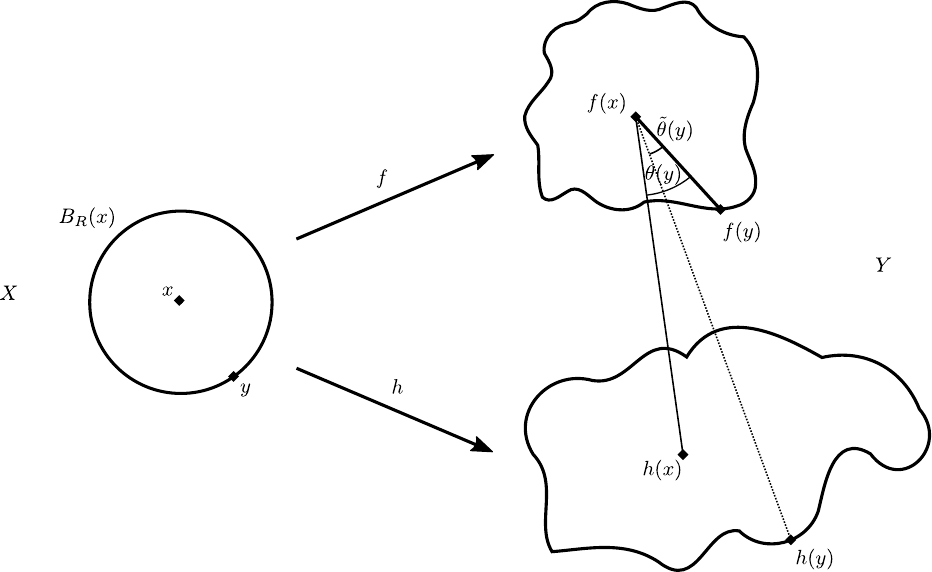}
    \caption{\label{fig:only} Setup of the proof of Lemma \ref{lm:fundamental-inequality}.}
\end{figure}
For clarity, we introduce the notation  
    \begin{gather*}
        \rho_f(y)=\mathrm{dist}(f(x), f(y)),\\
        \rho_h(y)=\mathrm{dist}(f(x), h(y)),\\
        \theta(y)=\measuredangle_{f(x)}(h(x), f(y)),\\
        \tilde{\theta}(y)=\measuredangle_{f(x)}(h(y), f(y)).
    \end{gather*}
    We will also denote by $\norm{\cdot}_\infty$ the $L^\infty$ norm over $B_{r+\varepsilon}(x)$. When referring to norms over smaller balls, we will specify it explicitly. The reader may wish to consult Figure \ref{fig:only} for the proof setup.
    \par The proof will follow from the following two inequalities. The first is 
    \begin{gather}
        \label{eq:visual-angle-bound}\measuredangle_{f(x)}(h(x), h(y))\leq Ca\frac{\rho_h(x)+\norm{\rho_f}_\infty}{\sinh\left(a\rho_h(x)-Ca\norm{\rho_f}_\infty\right)},
    \end{gather}
    for $y\in B_r(x)$, where $C=C(r,\varepsilon,a,b,n)>0$. The second is 
    \begin{gather}
        \label{eq:int-deficiency}\int_{\partial B_r(x)} \min\left(a\rho_f(y), \log\frac{\pi}{\tilde{\theta}(y)}\right)\geq \frac{1}{2}\int_{\partial B_r(x)} a\rho_f,
    \end{gather}
    provided $\rho_h(x)\geq (C+2)\norm{\rho_f}_\infty$. Here we are integrating against $\sigma_{x,r}$, but we drop the $d\sigma_{x,r}(y)$ in formulas for brevity.
    We first prove Lemma \ref{lm:fundamental-inequality} assuming inequalities (\ref{eq:visual-angle-bound}) and (\ref{eq:int-deficiency}), that we show in the next two subsections.
    \par Let $\xi=\frac{\pi}{4} \exp\left(-a\norm{\rho_f}_\infty\right)$, and 
    \begin{align*}
        \mathcal{C}=\{y\in \partial B_r: f(y)\neq f(x)\text{ and }\theta(y)\leq\xi\}.
    \end{align*} 
    By (\ref{eq:visual-angle-bound}), if $\rho_h(x)\geq M\norm{\rho_f}_\infty+N$ for some suitable constants $M, N$, we have 
    \begin{align*}
        \sup_{y\in \partial B_r(x)} \measuredangle_{f(x)}(h(x), h(y))\leq (1-e^{-\delta})\xi.
    \end{align*}
    We observe that for $y\in\partial B_r(x)\setminus\mathcal{C}$, we have 
    \begin{align*}
        \tilde{\theta}(y) \geq \theta(y)- \measuredangle_{f(x)}(h(x), h(y))\geq e^{-\delta}\theta(y).
    \end{align*}
    Thus for $y\in\partial B_r(x)\setminus\mathcal{C}$, we have 
    \begin{align*}
        \log \frac{\pi}{\tilde{\theta}(y)}\leq \delta+\log\frac{\pi}{\theta(y)}.
    \end{align*}
    We now estimate the integral on the left-hand side of (\ref{eq:int-deficiency}) by splitting the domain of integration,  
    \begin{align}\label{eq:a+b-estimate}
        \int_{\partial B_r(x)} \min\left(a\rho_f(y), \log\frac{\pi}{\tilde{\theta}(y)}\right)\leq \int_{\partial B_r(x)\setminus\mathcal{C}} \min\left(a\rho_f(y), \delta+\log\frac{\pi}{\theta(y)}\right)+\int_{\mathcal{C}} a\rho_f.
    \end{align}
    We note that by choice of $\xi$, for $y\in\mathcal{C}$, 
    \begin{align*}
        \delta+\log \frac{\pi}{\theta(y)}\geq \delta+\log\frac{\pi}{\xi}\geq a\norm{\rho_f}_\infty \geq a\rho_f(y).
    \end{align*}
    By (\ref{eq:a+b-estimate}) and (\ref{eq:int-deficiency}), we have 
    \begin{align*}
        \frac{1}{2}\int_{\partial B_r} a\rho_f\leq \int_{\partial B_r} \min\left(a\rho_f(y), \delta+\log\frac{\pi}{\theta(y)}\right),
    \end{align*}
    from which the result follows immediately.
    \subsubsection{Proof of (\ref{eq:visual-angle-bound}).}
    The proof of (\ref{eq:visual-angle-bound}) depends on the following estimate. 
    \begin{claim}\label{claim:supinf-convexity}
        Let $f:B_r(x)\to\mathbb{R}$ be a subharmonic $L$-Lipschitz function. Then there exists $\lambda=\lambda(r, L,a,b,n)$ with $0<\lambda<1$, such that 
        \begin{align*}
            f(x)\leq \lambda \inf_{B_r(x)}f+(1-\lambda)\sup_{B_r(x)}f.
        \end{align*}
    \end{claim}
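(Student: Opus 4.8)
The plan is to bound $f(x)$ by an average of $f$ over a sphere through the point where $f$ is smallest, and then use the Lipschitz control to see that a definite fraction of that average is strictly below $\sup_{B_r(x)}f$. After subtracting a constant we may assume $m:=\inf_{B_r(x)}f=0$, and write $M:=\sup_{B_r(x)}f$; since $f$ is $L$-Lipschitz and $\overline{B_r(x)}$ has diameter $2r$ we have $M\le 2Lr$. If $M=0$ then $f\equiv 0$ and any $\lambda\in(0,1)$ works, so assume $M>0$; the goal is $f(x)\le(1-\lambda)M$. Because $f$ is Lipschitz it extends continuously to $\overline{B_r(x)}$, so the infimum $0$ is attained at some $p\in\overline{B_r(x)}$; if $p=x$ we are done, so set $\rho_0:=\mathrm{dist}(x,p)\in(0,r]$. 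The Lipschitz bound (with $f(p)=0$) gives $0\le f\le\min\bigl(M,\,L\,\mathrm{dist}(\cdot,p)\bigr)$ on all of $B_r(x)$.

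Now apply the sub-mean-value inequality against harmonic measure on the sphere $\partial B_{\rho_0}(x)$, which passes through $p$ (for $\rho_0<r$ this is immediate, and the case $\rho_0=r$ follows by letting $\rho\uparrow r$):
\begin{align*}
    f(x)&\le\int_{\partial B_{\rho_0}(x)}f\,d\sigma_{x,\rho_0}\le\int_{\partial B_{\rho_0}(x)}\min\bigl(M,\,L\,\mathrm{dist}(y,p)\bigr)\,d\sigma_{x,\rho_0}(y)\\
    &=M-\int_{\partial B_{\rho_0}(x)}\bigl(M-L\,\mathrm{dist}(y,p)\bigr)_{+}\,d\sigma_{x,\rho_0}(y).
\end{align*}
The last integrand is at least $M/2$ on the set $E:=\{y\in\partial B_{\rho_0}(x):\mathrm{dist}(y,p)<M/(2L)\}$, so $f(x)\le M\bigl(1-\tfrac12\sigma_{x,\rho_0}(E)\bigr)$, and the whole statement reduces to a lower bound $\sigma_{x,\rho_0}(E)\ge c>0$ with $c$ depending only on $r,L,a,b,n$.

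The main point — and the step I expect to be the only genuinely technical one — is this harmonic-measure estimate. Here $E$ is the portion of $\partial B_{\rho_0}(x)$ lying within the fixed distance $\delta:=M/(2L)\le r$ of the point $p\in\partial B_{\rho_0}(x)$. If $\delta\ge 2\rho_0$ then $E$ is the whole sphere and $f(x)\le M/2$, so we may assume $\delta<2\rho_0$, in which case $E$ contains a geodesic cap of $\partial B_{\rho_0}(x)$ of radius $\approx\min(\delta,\rho_0)$ centred at $p$. The harmonic measure $\sigma_{x,\rho_0}$ viewed from the centre is comparable, with constants depending only on $\rho_0\le r$ and the curvature bounds, to the normalized Riemannian surface measure on $\partial B_{\rho_0}(x)$ — this is a boundary-Harnack statement for $B_{\rho_0}(x)$ (equivalently, comparison of the Poisson kernel with pole at the centre against the rotationally symmetric models, using the upper curvature bound $-a^2$). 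The surface measure of a cap of radius $t$ is $\gtrsim (t/\rho_0)^{n-1}$ by volume comparison on the sphere, using nonpositivity of curvature for the lower bound on the cap and the lower curvature bound $-b^2$ for the upper bound on the total area. Combining these gives $\sigma_{x,\rho_0}(E)\gtrsim (M/(2Lr))^{n-1}$, hence the claim with an explicit $\lambda$. Everything outside this comparison is routine bookkeeping with the Lipschitz bound and the mean-value inequality.
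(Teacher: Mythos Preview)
Your argument follows the paper's exactly: normalize, apply the sub-mean-value inequality on the sphere through the minimizer $p$, use the Lipschitz bound to show $f\le M/2$ on a cap around $p$, and bound the harmonic measure of that cap from below. The only substantive difference is in this last step: the paper identifies the cap with a cone $\mathrm{Cone}(xp,\theta)\cap\partial B_\rho(x)$ and invokes the Benoist--Hulin lower bound on harmonic measures of cones in pinched Hadamard manifolds, whereas you assert a comparison of harmonic measure from the centre with normalized surface area. That comparison is true, but it is essentially the nontrivial content of the Benoist--Hulin result rather than a routine consequence of ``boundary Harnack'', so your justification there is thinner than the paper's citation.

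One further point worth flagging: your final estimate $\sigma_{x,\rho_0}(E)\gtrsim (M/(2Lr))^{n-1}$ visibly depends on the oscillation $M$, so the $\lambda$ you extract is not uniform in $f$ as the claim demands. The paper's proof hides the same dependence---after rescaling to $\sup f=1$ the Lipschitz constant becomes $L/M$, and the cone angle $\theta$ (hence $\mu(\theta)$) then depends on $M$---so this is an issue shared with the paper's write-up rather than one peculiar to yours.
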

    \begin{proof}
        In the proof, all bounds depend on $a,b,n$, and we suppress this in the notation.
        \par If $f$ is constant, there is nothing to prove. Therefore we post-compose $f$ with a linear function so that $\inf_{B_r(x)}f=0$ and $\sup_{B_r(x)}f=1$. Let $y\in B_r(x)$ be such that $f(y)=0$, and let $\rho=\mathrm{dist}(x, y)$. We will show the existence of $\lambda=\lambda(L)<1$ such that $f(x)\leq \lambda$. 
        \par For some $\theta=\theta(L, r)$, the following holds by comparison to the hyperbolic plane: given $y_1, y_2\in \partial B_{\tilde{r}}(x)$ for $\tilde{r}\leq r$ with $\measuredangle_x(y_1, y_2)<\theta$, we have $\mathrm{dist}(y_1, y_2)<\frac{1}{2L}$.
        \par We now analyze the inequality 
        \begin{align}\label{eq:supinf-convexity-int-subh}
            f(x)\leq\int_{\partial B_\rho(x)}f(y)d\sigma_{x, \rho}(y).
        \end{align}
        For any $z\in \partial B_\rho(x)\cap \mathrm{Cone}(xy, \theta)$, have $\mathrm{dist}(z, y)\leq \frac{1}{2L}$, and hence $\abs{f(z)}=\abs{f(z)-f(y)}\leq\frac{1}{2}$, since $f$ is $L$-Lipschitz. From (\ref{eq:supinf-convexity-int-subh}), we get 
        \begin{align*}
            f(x)&\leq \frac{1}{2}\sigma_{x,\rho}\left({\partial B_\rho(x)\cap\mathrm{Cone}(xy, \theta)} \right)+1-\sigma_{x,\rho}\left({\partial B_\rho(x)\cap\mathrm{Cone}(xy, \theta)} \right)\\
            &= 1-\frac{1}{2}\sigma_{x,\rho}\left({\partial B_\rho(x)\cap\mathrm{Cone}(xy, \theta)} \right).
        \end{align*}
        By \cite{Benoist2020HarmonicMO}, there is a $\mu=\mu(\theta)$ such that \[\sigma_{x,\rho}\left({\partial B_\rho(x)\cap\mathrm{Cone}(xy, \theta)} \right)\geq\mu.\] Therefore $f(x)\leq 1-\frac{1}{2}\mu$, and the claim is shown with $\lambda=\frac{1}{2}\mu$. 
    \end{proof}
    Since $\mathrm{dist}(h(y), f(y))\leq \rho_h(x)$, we have 
    \begin{align*}
        \rho_h(y)\leq \rho_f(y)+\mathrm{dist}(h(y), f(y))\leq\rho_f(y)+\rho_h(x),
    \end{align*}
    and hence $\norm{\rho_h}_{\infty}\leq \rho_h(x)+\norm{\rho_f}_{\infty}$. By Cheng's lemma, we have 
    \begin{align*}
        \norm{\nabla h}_{L^\infty(B_r(x))}\leq L(\varepsilon, a, b, n) \left(\rho_h(x)+\norm{\rho_f}_{L^\infty(B_{r+\varepsilon}(x))}\right).
    \end{align*}
    From Claim \ref{claim:supinf-convexity} applied to $\rho_h$ on $B_r(x)$, it follows that 
    \begin{align*}
        \rho_h(x)\leq \lambda \inf_{B_r} \rho_h+(1-\lambda) \left(\norm{\rho_f}_\infty+\rho_h(x)\right).
    \end{align*}
    In particular, we have 
    \begin{align}\label{eq:inf-rhoh-estimate}
        \inf_{B_r}\rho_h\geq \rho_h(x)-\frac{1-\lambda}{\lambda}\norm{\rho_f}_\infty\geq \rho_h(x)-C\norm{\rho_f}_\infty.
    \end{align}
    By comparison to the hyperbolic plane, we have for any $y\in B_r(x)$, 
    \begin{align*}
        a\mathrm{length}(h([x, y]))\geq \sinh(a\inf_{B_r}\rho_h) \measuredangle_{f(x)}(h(x), h(y)).
    \end{align*}
    By Cheng's lemma, 
    \[\mathrm{length}(h([x, y]))\leq r\norm{D h}_\infty\leq C\norm{\rho_h}_\infty\leq C\left(\rho_h(x)+\norm{\rho_f}_\infty\right).\] 
    Therefore, 
    \begin{align*}
        \measuredangle_{f(x)}(h(x), h(y))\leq Ca\frac{\rho_h(x)+\norm{\rho_f}_\infty}{\sinh\left(a\rho_h(x)-Ca\norm{\rho_f}_\infty\right)}.
    \end{align*}
    \subsubsection{Proof of (\ref{eq:int-deficiency}).} We first relate the deficiency (i.e. the slack in the triangle inequality) of the triangle with vertices $h(x),f(y), h(y)$ and the angle $\measuredangle_{f(x)}(h(y),f(y))$, that we remind the reader is denoted by $\tilde{\theta}(y)$. Note that a slight weakening of the following claim is stated in \cite[Lemma 2.1(b)]{Benoist2017HarmonicQM}.
    \begin{claim}\label{claim:def-angle}
        Let $D(y)=\rho_f(y)+\rho_h(y)-\mathrm{dist}(f(y), h(y))$. Then $\log\frac{\pi}{\tilde{\theta}(y)}\geq \frac{a}{2}D(y)$.
    \end{claim}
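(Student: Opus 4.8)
The plan is to read $D(y)$ as twice the Gromov product $(f(y)\,|\,h(y))_{f(x)}$, so that the assertion becomes a (slightly sharpened) version of the standard estimate bounding a Gromov product by the logarithm of the reciprocal of the corresponding Alexandrov angle in a $\mathrm{CAT}(-a^2)$ space, and to prove it by a direct hinge comparison. Write $p=f(x)$, $q=f(y)$, $s=h(y)$, so that $\rho_f(y)=\mathrm{dist}(p,q)$, $\rho_h(y)=\mathrm{dist}(p,s)$, $\tilde\theta(y)=\measuredangle_p(q,s)$, and $D(y)=\mathrm{dist}(p,q)+\mathrm{dist}(p,s)-\mathrm{dist}(q,s)$. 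Since $X$ has sectional curvature $\le -a^2$ it is $\mathrm{CAT}(-a^2)$, and comparing the hinge at $p$ in the triangle $p,q,s$ with the hinge in the model plane $\mathbb{H}^2_{-a^2}$ having the same two leg lengths and the same included angle $\tilde\theta(y)$ gives a lower bound on the opposite side:
\[
    \cosh\!\big(a\,\mathrm{dist}(q,s)\big)\ \ge\ \cosh(a\rho_f)\cosh(a\rho_h)-\sinh(a\rho_f)\sinh(a\rho_h)\cos\tilde\theta(y).
\]
The one subtlety worth flagging is the direction of this comparison: it is precisely the \emph{upper} curvature bound $-a^2$ (equivalently the $\mathrm{CAT}(-a^2)$ property) that forces $\mathrm{dist}(q,s)$ to be at least the model value, hence forces $\tilde\theta(y)$ to be small when the deficiency $D(y)$ is large.

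Next I would substitute $\mathrm{dist}(q,s)=\rho_f+\rho_h-D(y)$, expand $\cos\tilde\theta=1-2\sin^2(\tilde\theta/2)$, and use $\cosh(a\rho_f)\cosh(a\rho_h)-\sinh(a\rho_f)\sinh(a\rho_h)=\cosh(a(\rho_f-\rho_h))$ together with the product-to-sum identity $\cosh A-\cosh B=2\sinh\tfrac{A+B}{2}\sinh\tfrac{A-B}{2}$ (with $A=a(\rho_f+\rho_h-D)$, $B=a(\rho_f-\rho_h)$) to rewrite the inequality as
\[
    \sinh\!\Big(a\rho_f-\tfrac{a}{2}D(y)\Big)\sinh\!\Big(a\rho_h-\tfrac{a}{2}D(y)\Big)\ \ge\ \sin^2\!\Big(\tfrac{\tilde\theta(y)}{2}\Big)\,\sinh(a\rho_f)\sinh(a\rho_h).
\]
Here one must check that the two arguments on the left are nonnegative: the triangle inequality $\mathrm{dist}(q,s)\ge|\rho_f-\rho_h|$ gives $D(y)\le 2\min(\rho_f,\rho_h)$, so $\tfrac{1}{2}D(y)\le\min(\rho_f,\rho_h)$ and the manipulation is legitimate.

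Finally I would estimate both sides. For the left side, $\dfrac{\sinh(u-v)}{\sinh u}=\cosh v-\coth(u)\sinh v\le\cosh v-\sinh v=e^{-v}$ for $0\le v\le u$ (since $\coth u\ge 1$), applied with $u=a\rho_f$ and $u=a\rho_h$ and $v=\tfrac{a}{2}D(y)$; for the right side, Jordan's inequality $\sin(\tfrac{\tilde\theta}{2})\ge\tfrac{\tilde\theta}{\pi}$, valid since $\tilde\theta(y)\in[0,\pi]$, gives $\sin^2(\tfrac{\tilde\theta}{2})\ge\tilde\theta(y)^2/\pi^2$. Combining, $\tilde\theta(y)^2/\pi^2\le e^{-aD(y)}$, i.e. $\log\frac{\pi}{\tilde\theta(y)}\ge\frac{a}{2}D(y)$, as claimed. (If $\rho_f=0$ or $\rho_h=0$ then $D(y)=0$ and the inequality is trivial, using only $\tilde\theta(y)\le\pi$.) I do not expect a genuine obstacle here — the argument is a short comparison-geometry computation — with the only care points being the direction of the hinge comparison and the sign check $D(y)\le 2\min(\rho_f,\rho_h)$ noted above.
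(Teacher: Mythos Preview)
Your proof is correct and is essentially the same as the paper's: both use the $\mathrm{CAT}(-a^2)$ hinge comparison at $f(x)$ to get the hyperbolic law-of-cosines inequality, rewrite it as $\sin^2(\tilde\theta/2)\le \dfrac{\sinh(a\rho_f-\tfrac{a}{2}D)\sinh(a\rho_h-\tfrac{a}{2}D)}{\sinh(a\rho_f)\sinh(a\rho_h)}$, bound each factor by $e^{-aD/2}$ via $\sinh(u-v)\le e^{-v}\sinh u$, and finish with $\sin(\tilde\theta/2)\ge\tilde\theta/\pi$. Your framing via the Gromov product and your explicit flagging of the direction of the comparison and of the sign check $D(y)\le 2\min(\rho_f,\rho_h)$ are helpful clarifications, but the argument is identical.
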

    \begin{proof}
        This follows from comparison with the hyperbolic plane. By the hyperbolic law of cosines, we have 
        \begin{align*}
            \cosh(a\mathrm{dist}(f(y), h(y)))&\geq \cosh(a\rho_f(y))\cosh(a\rho_h(y))-\sinh(a\rho_f(y))\sinh(a\rho_h(y))\cos\tilde{\theta}(y)\\
            &=\cosh(a(\rho_f(y)-\rho_h(y)))+2\sin^2\frac{\tilde{\theta}(y)}{2}\sinh(a\rho_f(y))\sinh(a\rho_h(y))
        \end{align*}
        Therefore 
        \begin{align*}
            \sin^2\frac{\tilde{\theta}(y)}{2}&\leq \frac{\sinh\left(\frac{a}{2}(\mathrm{dist}(f,h)+\rho_f-\rho_h)\right)\sinh\left(\frac{a}{2}(\mathrm{dist}(f,h)+\rho_h-\rho_f)\right)}{\sinh(a\rho_f)\sinh(a\rho_h)}\\
            &\leq \frac{\sinh\left(a\rho_f-\frac{a}{2}D(y)\right)}{\sinh(a\rho_f)}\frac{\sinh\left(a\rho_h-\frac{a}{2}D(y)\right)}{\sinh(a\rho_h)}\\
            &\leq e^{-aD(y)},
        \end{align*}
        where we used the inequality $\sinh(x-y)\leq e^{-y}\sinh(x)$ for $y\geq 0$ twice, and the facts that $D(y)\leq \rho_f(y)+\rho_h(y)-\abs{\rho_f(y)-\rho_h(y)}=2\min(\rho_f(y), \rho_h(y))$. Taking logarithms, we see that $\frac{a}{2}D(y)\leq \log\frac{1}{\sin\frac{\tilde{\theta}}{2}}\leq\log\frac{\pi}{\tilde{\theta}(y)}$,
        since $\sin(x)\geq\frac{2x}{\pi}$ for $0\leq x\leq \frac{\pi}{2}$ by concavity. 
    \end{proof}
    By (\ref{eq:inf-rhoh-estimate}), and the assumption that $\rho_h(x)\geq(C+2)\norm{\rho_f}_\infty$, we have $\inf_{B_r}(a\rho_h)\geq 2\norm{\rho_f}_\infty$. We observe that for $y\in \partial B_r(x)$, we have by Claim \ref{claim:def-angle}
    \begin{align*}
        \log \frac{\pi}{\tilde{\theta}(y)}\geq \frac{a}{2}D(y).
    \end{align*}
    Note that $D(y)=\rho_f(y)+\rho_h(y)-\mathrm{dist}(f(y),h(y))\leq 2\rho_f(y)$ by the triangle inequality, and hence $\frac{a}{2}D(y)\leq a\rho_f(y)$.
    Integrating this inequality over $\partial B_r(x)$, we get 
    \begin{align*}
        \int_{\partial B_r(x)} \min\left(a\rho_f(y), \log\frac{\pi}{\tilde{\theta}(y)}\right)&\geq \frac{1}{2}\int_{\partial B_r(x)} aD(y)\\
        &=\frac{1}{2}\int_{\partial B_r(x)} \left(a\rho_h(x)+a\rho_f(y)-a\mathrm{dist}(f(y), h(y))\right)\\
        &\geq \frac{1}{2}\int_{\partial B_r(x)} a\rho_f,
    \end{align*}
    where we used subharmonicity of $\rho_h$ in going from the first to the second line. 
    \section{Weakly non-collapsing maps}\label{sec:weakly-non-collapsing}
    In this section we prove Theorem \ref{thm:main-qia-omega}, that in turn immediately implies Theorem \ref{thm:main-qia}, as explained in Remark \ref{remark:non-collapsing}. The proof of Theorem \ref{thm:main-qia-omega} has four steps. 
    \begin{enumerate}
        \item By combining \cite[Lemma 3.1]{tosic} and Proposition \ref{prop:qia-deform}, it follows immediately that the map $f$ is at a finite distance from a map that is weakly non-collapsing with first two derivatives bounded. 
        \item We then construct harmonic maps $h_n$ on larger and larger balls $B_n$ that agree with $f$ on $\partial B_n$.
        \item The boundary estimate of Benoist--Hulin \cite[Proposition 3.7]{Benoist2017HarmonicQM}, stated below as Proposition \ref{prop:boundary-estimate}, then shows that in any finite distance neighbourhood of the boundary $\partial B_n$, the distance between $f$ and $h_n$ remains bounded. The generalized interior estimate Theorem \ref{thm:generalized-interior-estimate} shows that the distance between $f$ and $h_n$ remains bounded far from the boundary of $B_n$. 
        \item The limiting argument following Benoist--Hulin, \cite[\S 3.3]{Benoist2017HarmonicQM}, that we state and prove below as Proposition \ref{prop:limiting}, then shows that a limit of $h_n$ can be extracted to get a harmonic map at a finite distance from $f$. This argument follows from the Arzela--Ascoli theorem, combined with some classical results on harmonic maps: Schauder elliptic estimates and Cheng's lemma.
    \end{enumerate}
    \begin{proposition}\label{prop:boundary-estimate}
        Let $f:X\to Y$ be a smooth map between two pinched Hadamard manifolds with first two derivatives bounded. Let $x_0\in X, R>0$, and let $h:B_R(x_0)\to Y$ be a harmonic map that agrees with $f$ on $\partial B_R(x_0)$. Then there exists a constant $C$ that depends only on $\norm{Df}_\infty, \norm{D^2f}_\infty$, and the pinching constants of $X, Y$, such that 
        \begin{align*}
            \mathrm{dist}(h(x), f(x))\leq C\mathrm{dist}(x, \partial B_R(x_0))
        \end{align*}
    \end{proposition}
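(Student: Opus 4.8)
The plan is to realize $\rho(x):=\mathrm{dist}(f(x),h(x))$ as an (almost) subharmonic function on $B_R(x_0)$ and then to trap it between $0$ and a \emph{linear} barrier built out of the distance to $\partial B_R(x_0)$. The strictly negative upper curvature bound on $X$ is precisely what makes the resulting constant independent of $R$.

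\emph{Step 1: a differential inequality for $\rho$.} Put $F=(f,h)\colon B_R(x_0)\to Y\times Y$ and let $\delta\colon Y\times Y\to\mathbb{R}_+$ be the distance function of $Y$. Since $Y$ is a Hadamard manifold, $\delta$ is convex, hence $\mathrm{Hess}\,\delta\geq 0$ wherever $\delta$ is twice differentiable, which is off the diagonal; there $\rho=\delta\circ F$ is smooth on the open set $\{\rho>0\}$. The Schoen--Yau formula for the Laplacian of the distance between two maps \cite{SCHOEN1979361} gives, on $\{\rho>0\}$,
\[
\Delta\rho=\mathrm{tr}_g\!\big(F^{*}(\mathrm{Hess}\,\delta)\big)+\langle d\delta,\tau(F)\rangle\ \geq\ \langle d\delta,\tau(F)\rangle,
\]
and $\tau(F)=(\tau(f),\tau(h))=(\tau(f),0)$ because $h$ is harmonic, while the $f$-component of $d\delta$ is a unit covector. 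Hence $\Delta\rho\geq -|\tau(f)|\geq -c_0$ on $\{\rho>0\}$, where, by a routine tensorial computation, $|\tau(f)|=|\mathrm{tr}_g\nabla df|\leq c_0$ for a constant $c_0$ depending only on $\norm{Df}_\infty$, $\norm{D^2f}_\infty$ and the pinching constants of $X$ and $Y$. Where $\rho=0$ the continuous function $\rho$ attains a global minimum, so in the barrier sense $\rho$ is a subsolution of $\Delta u=-c_0$ on all of $B_R(x_0)$.

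\emph{Step 2: the barrier and the maximum principle.} Fix $a>0$ with $\mathrm{Sec}_X\leq -a^2$, let $n=\dim X$, and set $d(x):=\mathrm{dist}(x,\partial B_R(x_0))=R-\mathrm{dist}(x_0,x)$, the second equality coming from the triangle inequality in $X$. Laplacian comparison gives $\Delta\,\mathrm{dist}(x_0,\cdot)\geq (n-1)a\coth\!\big(a\,\mathrm{dist}(x_0,\cdot)\big)\geq (n-1)a$ on $B_R(x_0)\setminus\{x_0\}$, so $\Delta d\leq -(n-1)a$ there. For $\eta>0$ set $\psi_\eta:=\tfrac{c_0+\eta}{(n-1)a}\,d$; then $\psi_\eta\geq 0$ on $B_R(x_0)$, $\psi_\eta=0$ on $\partial B_R(x_0)$, and $\Delta\psi_\eta\leq -(c_0+\eta)$ away from $x_0$. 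If $\rho-\psi_\eta$ attained a positive interior maximum at $p$, then $\rho(p)>\psi_\eta(p)\geq 0$, so $\rho$ and $\psi_\eta$ are smooth near $p$; moreover $p\neq x_0$, because $\rho-\psi_\eta$ differs by a constant from $\rho+\tfrac{c_0+\eta}{(n-1)a}\,\mathrm{dist}(x_0,\cdot)$, whose radial summand a smooth function cannot dominate to first order near $x_0$. At such a $p$ we get $\Delta(\rho-\psi_\eta)(p)\geq -c_0+(c_0+\eta)=\eta>0$, contradicting maximality. Hence $\max_{\overline{B_R(x_0)}}(\rho-\psi_\eta)=\max_{\partial B_R(x_0)}(\rho-\psi_\eta)=0$, i.e.\ $\rho\leq\psi_\eta$ on $B_R(x_0)$; letting $\eta\downarrow 0$ yields
\[
\mathrm{dist}(f(x),h(x))\ \leq\ \frac{c_0}{(n-1)a}\,\mathrm{dist}(x,\partial B_R(x_0)),
\]
which is the assertion with $C=\tfrac{c_0}{(n-1)a}$.

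\emph{The main obstacle.} The argument is classical in spirit; the two points requiring attention are (i) the tensorial bound $|\tau(f)|\lesssim\norm{D^2f}_\infty$ in terms of the data permitted by the statement, and, more importantly, (ii) the fact that $C$ does not blow up as $R\to\infty$. The latter genuinely uses strictly negative (not merely nonpositive) upper curvature of $X$: it rests on the uniform estimate $\coth(a\,\mathrm{dist}(x_0,\cdot))\geq 1$, hence $\Delta d\leq -(n-1)a$ with an $R$-independent constant. The Euclidean analogue is false, since the solution of $\Delta u=-c_0$ vanishing on a sphere of radius $R$ has boundary slope of order $R$. The residual non-smoothness (of $\delta$ along the diagonal of $Y\times Y$ and of $d$ at $x_0$) is harmless, since a positive interior maximum of $\rho-\psi_\eta$ can occur at neither.
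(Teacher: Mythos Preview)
Your argument is correct. The paper does not supply its own proof of this proposition: it simply cites \cite[Proposition 3.7]{Benoist2017HarmonicQM} and omits the argument. What you have written is essentially a reconstruction of the Benoist--Hulin proof---the inequality $\Delta\rho\geq -\norm{\tau(f)}_\infty$ from \cite{SCHOEN1979361} combined with a linear barrier built from $R-\mathrm{dist}(x_0,\cdot)$ via the Hessian/Laplacian comparison under $\mathrm{Sec}_X\leq -a^2$---so there is nothing to contrast. Two cosmetic remarks: the identity $d(x)=R-\mathrm{dist}(x_0,x)$ uses, besides the triangle inequality, that the radial geodesic from $x_0$ through $x$ actually meets $\partial B_R(x_0)$ (trivial here since $X$ is Hadamard); and the bound $|\tau(f)|\leq c_0$ only needs $\norm{D^2f}_\infty$ (interpreting $D^2f=\nabla df$) together with $\dim X$, so the dependence on $\norm{Df}_\infty$ and the pinching constants is harmless but unnecessary.
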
 
    \begin{proposition}\label{prop:limiting}
        Let $X, Y$ be pinched Hadamard manifolds, and let $f:X\to Y$ be a smooth Lipschitz map with bounded second derivative. We fix a point $x\in X$, and let $h_n:B_n(x)\to Y$ be harmonic maps such that $\sup_n\sup_{B_n(x)} \mathrm{dist}(f, h_n)<\infty$. Then there exists a harmonic map $h:X\to Y$ such that $\sup\mathrm{dist}(h, f)<\infty$.  
    \end{proposition}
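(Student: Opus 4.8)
The plan is to produce $h$ as a $C^{2}_{\mathrm{loc}}$ subsequential limit of the $h_{n}$, using Cheng's lemma (Lemma~\ref{lm:cheng}) and the nonlinear Schauder estimates (Theorem~\ref{thm:schauder}) for uniform interior regularity, and Arzel\`a--Ascoli together with a diagonal argument for compactness. Set $D:=\sup_{n}\sup_{B_{n}(x)}\mathrm{dist}(f,h_{n})<\infty$. Fix $R>0$; for every $n>R+1$ the restriction $h_{n}|_{B_{R+1}(x)}$ is a harmonic map whose image lies within distance $D$ of the bounded set $f(B_{R+1}(x))$, so
\[
\mathrm{diam}\!\left(h_{n}(B_{R+1}(x))\right)\leq 2D+2(R+1)\norm{Df}_{\infty}=:\Lambda(R),
\]
a bound independent of $n$.

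Next I would apply Lemma~\ref{lm:cheng} on $B_{R+1}(x)$ to get $\norm{Dh_{n}}_{L^{\infty}(B_{R}(x))}\leq C_{1}(R)$ with $C_{1}(R)$ depending only on $R$, $\Lambda(R)$ and the pinching data, hence not on $n$; feeding this into Theorem~\ref{thm:schauder} with $\Omega_{0}=B_{R}(x)\subset\Omega=B_{R+1}(x)$ yields $\norm{h_{n}}_{C^{2,\alpha}(B_{R}(x))}\leq C_{2}(R)$, uniformly in $n$ (for $n$ large), for some fixed $\alpha\in(0,1)$. Taking $R=1,2,3,\dots$ and extracting nested subsequences, the diagonal subsequence $h_{n_{k}}$ converges in $C^{2}_{\mathrm{loc}}(X)$ to a map $h:X\to Y$; the limit genuinely lands in $Y$ because $Y$ is complete and, on each $B_{R}(x)$, the images $h_{n_{k}}(B_{R}(x))$ lie in a fixed compact set. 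Since each $h_{n_{k}}$ solves the harmonic map equation $\Delta(h^{\mu})+g^{ij}h_{i}^{\nu}h_{j}^{\eta}\Gamma^{\mu}_{\nu\eta}=0$ in local coordinates and the coefficients are continuous, $C^{2}$ convergence lets me pass to the limit, so $h$ is harmonic. Finally, for each $y\in X$ we have $\mathrm{dist}(f(y),h_{n_{k}}(y))\leq D$ for all large $k$, whence $\mathrm{dist}(f(y),h(y))\leq D$; thus $\sup\mathrm{dist}(h,f)\leq D<\infty$.

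The argument is essentially standard --- it is the limiting step of \cite[\S 3.3]{Benoist2017HarmonicQM} --- so I do not expect a genuine obstacle; the only point requiring care is that every constant produced along the way be independent of $n$. This is ensured by always working on a fixed ball $B_{R}(x)$ with $n$ restricted to be large, so that Cheng's lemma and the Schauder estimate see only $R$, $\Lambda(R)$, and the uniform pinching constants. Completeness of the Hadamard manifold $Y$ then guarantees that the $C^{2}_{\mathrm{loc}}$ limit is a well-defined harmonic map into $Y$ at distance at most $D$ from $f$.
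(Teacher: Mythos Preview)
Your proof is correct and follows the same route as the paper: bound $\mathrm{diam}(h_n(K))$ on fixed compacts, apply Cheng's lemma for a uniform gradient bound, upgrade via the Schauder estimate to a uniform $C^{2,\alpha}$ bound, extract a $C^2_{\mathrm{loc}}$ limit by Arzel\`a--Ascoli (with a diagonal argument), and conclude harmonicity and $\sup\mathrm{dist}(h,f)\le D$. The only cosmetic difference is that the paper phrases things in terms of an arbitrary compact $K$ and applies Arzel\`a--Ascoli twice (once for $h_n$, once for $D^2h_n$) rather than making the diagonal argument explicit.
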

    \par Proposition \ref{prop:boundary-estimate} is exactly stated in \cite{Benoist2017HarmonicQM}, so we omit the proof. Proposition \ref{prop:limiting} appears in \cite{Benoist2017HarmonicQM} as well, however it was never explicitly stated, so for the reader's convenience we include the proof below in \S\ref{subsec:pf-prop-limiting}. We then prove Theorem \ref{thm:main-general} in \S\ref{subsec:pf-thm-main-qia-omega}.
    \subsection{Proof of Proposition \ref{prop:limiting}}\label{subsec:pf-prop-limiting}
    For any fixed compact set $K\subset X$, we have 
    \begin{align*}
        \mathrm{diam}(h_n(K))\leq 2\sup_{B_n(x)}\mathrm{dist}(h_n, f)+\mathrm{diam}(f(K))
    \end{align*}
    Hence $\mathrm{diam}(h_n(K))$ is bounded, and hence by Cheng's lemma \[\sup_{n>\mathrm{diam}(K)+1}\norm{D h_n}_{L^\infty(K)}<\infty.\]
    Therefore by the Arzela--Ascoli theorem, we may pass to a subsequence and extract a limit $h_n\to h$, that is uniform on compact subsets of $X$. 
    \par From the fact that $\sup_n\norm{D h_n}_{L^\infty(K)}<\infty$ for any compact set $K\subset X$, we see that for $\alpha\in(0,1)$, we have 
    \begin{align*}
        \sup_n\norm{h_n}_{C^\alpha(K)}<\infty
    \end{align*}
    From Schauder elliptic estimates (see Theorem \ref{thm:schauder}) and the fact that $h_n$ is harmonic, we see that $\sup_n \norm{h_n}_{C^{2,\alpha}(K)}<\infty$ for any compact $K\subset X$. Applying Arzela--Ascoli again, we may extract a further subsequence such that $D^2 h_n\to H$. It is easy to see that $H=D^2 h$, so in particular $h$ is harmonic. 
    \par Finally, we have 
    \begin{align*}
        \sup \mathrm{dist}(h, f)\leq\sup_n\sup_{B_n(x)}\mathrm{dist}(h_n, f)<\infty,
    \end{align*}
    which concludes the proof of Proposition \ref{prop:limiting}.
    \subsection{Proof of Theorem \ref{thm:main-qia-omega}}\label{subsec:pf-thm-main-qia-omega}
    Let $f:X\to Y$ be an $\omega$-weakly non-collapsing map between pinched Hadamard manifolds. By \cite[Lemma 3.1]{tosic} there exists a smooth $\tilde{f}:X\to Y$ such that $D\tilde{f}, D^2\tilde{f}$ are bounded and $\sup\mathrm{dist}(f, \tilde{f})<\infty$. Proposition \ref{prop:qia-deform} then guarantees that $\tilde{f}$ is a weakly non-collapsing map, possibly with a different size function.
    \par Fix an arbitrary point $x\in X$. Then let $h_n:B_n(x)\to Y$ be the harmonic map that agrees with $\tilde{f}$ on $\partial B_n(x)$. If $\sup \mathrm{dist}(\tilde{f}, h_n)$ is a bounded sequence, by Proposition \ref{prop:limiting}, we are done. Assume therefore, after passing to a subsequence, that $\sup\mathrm{dist}(\tilde{f}, h_n)\to\infty$. 
    \par Let $x_n\in B_n(x)$ be a sequence of points such that the maximum of $\mathrm{dist}(\tilde{f}, h_n)$ is achieved at $x_n$. By Proposition \ref{prop:boundary-estimate}, we have \[R_n=\mathrm{dist}(x_n, \partial B_n(x))\to\infty.\] 
    We observe that the family of maps $\{\tilde{f}:(X, x_n)\to (Y, f(x_n))\text{ for }n=1,2,...\}$ is uniformly non-collapsing by definition. Applying Theorem \ref{thm:generalized-interior-estimate} to the harmonic maps $h_n: B_{R_n}(x_n)\to Y$, we get that 
    \begin{align*}
        \sup_n\sup_{B_n(x)}\mathrm{dist}(\tilde{f}, h_n)=\sup_n\mathrm{dist}(\tilde{f}(x_n), h_n(x_n))<\infty,
    \end{align*} 
    which is a contradiction. 
    \section{Nearest-point projections to admissible convex sets}\label{sec:nearest-point}
    This section is devoted to showing Theorem \ref{thm:main-general}. We first give a rough outline of the proof. As in the proof of Theorem \ref{thm:main-qia-omega}, we construct harmonic maps $h_n$ defined on larger and larger balls $B_n(o)$ for some fixed $o\in X$, agreeing with $r$ on the boundaries $\partial B_n(o)$. The goal is to use the limiting argument in Proposition \ref{prop:limiting} to get a harmonic map defined on all of $X$. It therefore suffices to show that $\sup_X\mathrm{dist}(h_n, r)$ is a bounded sequence. We do this in two steps.
    \begin{enumerate}
        \item We first show that for some fixed $D>0$, we have \[\sup_{X\setminus N_D(C)}\mathrm{dist}(h_n, r)\leq\sup_{N_D(C)}\mathrm{dist}(h_n, r)+O(1).\] 
        This inequality is derived analogously to \cite[\S 4]{tosic}, and it follows from the existence of a bounded subharmonic function $\Phi$ such that $\Delta\Phi\gtrsim e^{-a\mathrm{dist}(\cdot, C)}$ on $X\setminus N_D(C)$, for some fixed $D>0$, and from the classical inequality of Schoen--Yau \cite{SCHOEN1979361} on the Laplacian of the distance between two functions.
        \item It therefore remains to show that $\sup_{N_D(C)}\mathrm{dist}(h_n, r)$ is bounded. This follows from our generalized interior estimate Theorem \ref{thm:generalized-interior-estimate}, since the map $r$ is non-collapsing near the convex set $C$. This bound is contained in Proposition \ref{prop:interior-estimate} below.
    \end{enumerate}
    
    To state Proposition \ref{prop:interior-estimate}, we first note that, given any admissible convex set $C$ and a nearest-point projection map $r:X\to C$, by \cite[Corollary 3.7]{tosic}, there exists a smooth map $\tilde{r}:X\to X$ such that $\mathcal{D}:=\sup_X\mathrm{dist}(r, \tilde{r})<\infty$, and 
    \begin{gather*}
        \norm{D\tilde{r}}\lesssim e^{-a\mathrm{dist}(\cdot, C)},\\
        \norm{\tau(\tilde{r})}\lesssim e^{-a\mathrm{dist}(\cdot, C)}.
    \end{gather*}
    \begin{proposition}\label{prop:interior-estimate}
        Let $D>0$. There exist constants $R_0=R_0(D)>0$ and $M=M(D)>0$, such that, for any $x\in N_D(C)$ and $R>R_0$ we have the following property. Given a harmonic map $h:B_R(x)\to X$ such that $\mathrm{dist}(h, \tilde{r})$ achieves its maximum at $x$, we have $\mathrm{dist}(h, \tilde{r})<M$.
    \end{proposition}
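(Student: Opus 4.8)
The plan is to deduce this directly from the generalized interior estimate, Theorem \ref{thm:generalized-interior-estimate}. Suppose the conclusion fails: then there exist sequences $x_n\in N_D(C)$, $R_n\to\infty$ (if the $R_n$ stayed bounded the claim is immediate by compactness and the gradient bound on $\tilde r$), and harmonic maps $h_n:B_{R_n}(x_n)\to X$ with $\mathrm{dist}(h_n,\tilde r)$ maximized at $x_n$, but $\mathrm{dist}(h_n(x_n),\tilde r(x_n))\to\infty$. Consider the pointed family $\mathcal F=\{\tilde r:(X,x_n)\to (X,\tilde r(x_n)):n=1,2,\dots\}$. The domains and ranges all have the same (fixed) pinching constants, and $\tilde r$ is uniformly Lipschitz since $\|D\tilde r\|\lesssim 1$. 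If I can show $\mathcal F$ is uniformly non-collapsing in the sense of Definition \ref{dfn:uniformly-inner}, then Theorem \ref{thm:generalized-interior-estimate} gives $\sup_n\mathrm{dist}(\tilde r(x_n),h_n(x_n))<\infty$, a contradiction, and the proposition follows.

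So the core of the argument is verifying Definition \ref{dfn:uniformly-inner}(1) and (2) for $\mathcal F$, using only that $x_n\in N_D(C)$ and that $C$ is admissible. For part (1): fix $x\in N_D(C)$ and let $p=r(x)\in C$ be the nearest point, so $\mathrm{dist}(x,p)\le D$. By admissibility, for $R>R_0$ there is $\xi\in\partial_\infty X$ with $\partial B_R(p)\cap\mathrm{Cone}(p\xi,\theta)\subseteq C$. By Proposition \ref{proposition:moving-cone} (moving the apex from $p$ to $x$, both within distance $D$), a slightly smaller cone $\mathrm{Cone}(x\xi,\theta')$ minus a bounded ball around $x$ lands inside $\mathrm{Cone}(p\xi,\theta)$, hence — after intersecting with spheres of large enough radius — inside $C$; on that set $r$ is the identity, so $\mathrm{dist}(r(y),r(x))=\mathrm{dist}(y,x)-O(1)\gtrsim R$ for $y$ in that cone at radius $R$ from $x$. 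By the harmonic-measure-of-cones lower bound of Benoist--Hulin \cite{Benoist2020HarmonicMO}, $\sigma_{x,R}$ of the relevant angular sector is $\gtrsim 1$, which yields $\int_{\partial B_R(x)}\mathrm{dist}(r(x),r(y))\,d\sigma_{x,R}(y)\gtrsim R$; passing from $r$ to $\tilde r$ costs only $2\mathcal D$, absorbed for $R$ large, giving (1) with a uniform $c$. For part (2): this is where a genuine non-collapsing phenomenon must be exhibited, namely that the $\tilde r$-preimage of a thin cone with apex $\tilde r(x)$, far from $\tilde r(x)$, has small harmonic measure. Since $\tilde r$ is at bounded distance $\mathcal D$ from the \emph{retraction} $r$ onto a convex set, $r$ maps $X$ onto $C$ and collapses each normal fibre; the preimage $r^{-1}(\mathrm{Cone}(r(x)\eta,\theta)\setminus B_\omega(r(x)))$ is contained in a region of $X$ that, viewed from $x$, subtends a controlled angular sector whose harmonic measure is small when $\theta$ is small — this is exactly the content of the estimates in \cite[\S 4]{tosic}, and I would cite or adapt that. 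Taking $\omega$ to be any fixed sublinear function (e.g. $\omega(R)=\sqrt R$) suffices, since the constraint $\mathrm{dist}(\tilde r(x),\tilde r(y))\ge\omega(R)$ is only used to push the offending set away from the diameter-bounded "near $C$" part; then transfer from $r$ to $\tilde r$ using Proposition \ref{proposition:moving-cone} and the inclusion $\tilde r^{-1}(S)\subseteq N_{\mathcal D}(r^{-1}(S))$, exactly as in the proof of Proposition \ref{prop:qia-deform}.

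The main obstacle is part (2) of uniform non-collapsing: one must extract, uniformly over $x\in N_D(C)$ and over all directions $\eta\in\partial_\infty X$, a bound on the harmonic measure seen from $x$ of the preimage under $r$ of a thin distant cone. The key geometric input is that the nearest-point projection $r$ is $1$-Lipschitz and that $C$ is convex, so $r^{-1}$ of a small-angle cone based at $r(x)$ stays within a small-angle cone based at $x$ up to bounded error (intuitively, geodesics from $x$ into $C$ and their normal translates do not fan out), and then the harmonic-measure estimates of Benoist--Hulin for cones close the argument. I expect this to reduce cleanly to \cite[\S 4]{tosic}, where precisely this kind of estimate for retractions to convex hulls was carried out; the only new point is checking it holds uniformly for all basepoints within a fixed neighbourhood $N_D(C)$, which follows because the relevant constants depend only on $D$, the pinching constants, and the dimension.
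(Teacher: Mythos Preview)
Your proposal is correct and follows essentially the same route as the paper: reduce to Theorem \ref{thm:generalized-interior-estimate} by showing the family $\{\tilde r:(X,x)\to(X,\tilde r(x)):x\in N_D(C)\}$ is uniformly non-collapsing, and by Proposition \ref{prop:qia-deform} it suffices to do this for $r$ itself. Your argument for Definition \ref{dfn:uniformly-inner}(1) matches the paper's almost verbatim (admissibility, move the cone apex via Proposition \ref{proposition:moving-cone}, Benoist--Hulin lower bound on cone harmonic measure).

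The only substantive divergence is in your treatment of Definition \ref{dfn:uniformly-inner}(2). You defer to \cite[\S 4]{tosic}, but that section concerns the bounded subharmonic function used \emph{outside} $N_D(C)$ (the piece invoked in step (1) of the paper's outline of \S\ref{sec:nearest-point}), not the cone-preimage estimate you need here. The paper instead supplies a direct, elementary comparison: for $w,y\in C$ with $\mathrm{dist}(w,y)=R$ and any $z\in r^{-1}(y)$, one has $\measuredangle_w(y,z)\le\pi e^{-aR}$, using only that $\measuredangle_y(z,w)\ge\pi/2$ (since $y$ is the foot of the perpendicular from $z$ to $C$) and the dual law of cosines in a comparison triangle. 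This immediately gives
\[
r^{-1}\bigl(\mathrm{Cone}(r(x)\xi,\theta)\setminus B_M(r(x))\bigr)\subseteq\mathrm{Cone}(r(x)\xi,\theta+\pi e^{-aM}),
\]
and with the size function $\omega(R)=\sqrt{R}$ you already guessed, Proposition \ref{proposition:moving-cone} plus \cite{Benoist2020HarmonicMO} finish exactly as you outlined. So your geometric intuition (``$r^{-1}$ of a thin distant cone stays within a thin cone from $x$'') is precisely the right one; the missing ingredient is this half-page comparison claim, not any machinery from \cite{tosic}.
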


    We first show Theorem \ref{thm:main-general} assuming Proposition \ref{prop:interior-estimate} in \S\ref{subsec:pf-thm-main-general}. We then show Proposition \ref{prop:interior-estimate} in \S\ref{subsec:prop-interior-estimate}.
    \subsection{Proof of Theorem \ref{thm:main-general}}\label{subsec:pf-thm-main-general}
     From \cite[Proposition 4.4]{tosic}, for some $D>0$ large enough, there exist subharmonic functions $\phi_n:X\to\mathbb{R}$ for $n>D-1$, such that 
    \begin{align*}
        \Delta\phi_n\geq 1\text{ on }N_{n+1}(C)\setminus N_n(C),
    \end{align*}
    and $\sup_n\norm{\phi_n}_\infty<\infty$. We now construct the function 
    \begin{align*}
        \Phi=\sum_{n=\lfloor D\rfloor}^\infty e^{-an}\phi_n,
    \end{align*}
    such that $\Phi$ is a bounded subharmonic function, with the property that $\Delta\Phi\gtrsim e^{-a\mathrm{dist}(\cdot, C)}$ on $X\setminus N_D(C)$.
    \par We now fix an arbitrary point $o\in X$, and let $h_N:B_N(o)\to X$ be the harmonic map that agrees with $\tilde{r}$ on $\partial B_N(o)$. By Proposition \ref{prop:limiting}, it suffices to show the following claim.
    \begin{claim}
        The sequence $\sup_{B_N(o)}\mathrm{dist}(h_N, \tilde{r})$ is bounded.
    \end{claim}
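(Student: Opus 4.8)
The plan is to argue by contradiction, combining the bounded subharmonic function $\Phi$ constructed above with the Schoen--Yau Laplacian comparison, the boundary estimate (Proposition~\ref{prop:boundary-estimate}), and the interior estimate (Proposition~\ref{prop:interior-estimate}); once the Claim is proved, Proposition~\ref{prop:limiting} finishes the proof of Theorem~\ref{thm:main-general}. Write $u_N=\mathrm{dist}(h_N,\tilde{r})$, so that $u_N\equiv 0$ on $\partial B_N(o)$ since $h_N=\tilde{r}$ there. First I would record a differential inequality for $u_N$: since $h_N$ is harmonic, $\tilde{r}$ is smooth, and the target $X$ has non-positive curvature, the classical computation of Schoen--Yau \cite{SCHOEN1979361} gives $\Delta u_N\geq-\norm{\tau(\tilde{r})}$ in the barrier (equivalently distributional) sense, with no cut-locus issue since $X$ is Hadamard. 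Using the decay estimate $\norm{\tau(\tilde{r})}\lesssim e^{-a\,\mathrm{dist}(\cdot,C)}$ and the bound $\Delta\Phi\gtrsim e^{-a\,\mathrm{dist}(\cdot,C)}$ on $X\setminus N_D(C)$, one may then fix a constant $A>0$ large enough that $v_N:=u_N+A\Phi$ is strictly subharmonic on $X\setminus N_D(C)$.

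Now assume for contradiction that $\sup_{B_N(o)}u_N\to\infty$, after passing to a subsequence, and let $z_N\in\overline{B_N(o)}$ be a point where $v_N$ attains its maximum. Since $v_N(z_N)\geq\sup_{B_N(o)}u_N-A\norm{\Phi}_\infty\to\infty$ whereas $v_N\leq A\norm{\Phi}_\infty$ on $\partial B_N(o)$, and since a strictly subharmonic function has no interior local maximum, the point $z_N$ must lie in $N_D(C)\cap B_N(o)$. Moreover $u_N(z_N)\geq v_N(z_N)-A\norm{\Phi}_\infty\geq\sup_{B_N(o)}u_N-2A\norm{\Phi}_\infty$, so $z_N$ is a maximizer of $u_N$ over $B_N(o)$, and hence over the concentric ball $B_{R_N}(z_N)\subseteq B_N(o)$ where $R_N:=\mathrm{dist}(z_N,\partial B_N(o))$, up to the fixed additive error $2A\norm{\Phi}_\infty$. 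Proposition~\ref{prop:boundary-estimate}, applied to the harmonic map $h_N$ with boundary value $\tilde{r}$, gives $u_N(z_N)\lesssim R_N$, so $u_N(z_N)\to\infty$ forces $R_N\to\infty$.

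For $N$ large enough that $R_N>R_0(D)$, applying Proposition~\ref{prop:interior-estimate} to $h_N|_{B_{R_N}(z_N)}$ at the point $z_N\in N_D(C)$ then gives $u_N(z_N)\lesssim 1$, contradicting $u_N(z_N)\to\infty$ and establishing the Claim. The one point requiring care is that $z_N$ is only an \emph{approximate} maximizer of $u_N$ on $B_{R_N}(z_N)$; this is harmless because the maximizing hypothesis enters the proof of Proposition~\ref{prop:interior-estimate} — and of Theorem~\ref{thm:generalized-interior-estimate}, on which it rests — only through the inequality $\mathrm{dist}(f(x),h(x))\geq\mathrm{dist}(f(y),h(y))$, which may be weakened to $\geq\mathrm{dist}(f(y),h(y))-O(1)$ at the cost of enlarging the output constants. (Alternatively one may first prove the inequality $\sup_{B_N(o)}u_N\leq\sup_{N_D(C)\cap B_N(o)}u_N+O(1)$ of step~(1) in the outline directly, by the same maximum-principle argument, and then apply the interior estimate at the exact maximizer of $u_N$ over the compact set $N_D(C)\cap\overline{B_N(o)}$.)

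The main obstacle is precisely the two-region bookkeeping: one must verify that the distributional Schoen--Yau inequality for the merely Lipschitz function $u_N$ is strong enough, in combination with the decay of $\norm{\tau(\tilde{r})}$ and the structure of $\Phi$, both to make $v_N$ strictly subharmonic off $N_D(C)$ and to force any large maximum of $v_N$ into $N_D(C)$ — and to do so with constants independent of $N$. It is this step that converts the \emph{unconditional} statement of the Claim into the \emph{conditional} interior estimate of Proposition~\ref{prop:interior-estimate}, whose hypotheses concern a genuine harmonic map on a large ball centered in the admissible neighbourhood of $C$; once the maximum is localized to $N_D(C)$ and the radius $R_N$ is shown to diverge, the contradiction is immediate.
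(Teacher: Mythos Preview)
Your argument is correct and follows the same strategy as the paper: use the Schoen--Yau inequality together with $\Phi$ to force the relevant maximum into $N_D(C)$, use the boundary estimate to send the available radius to infinity, and then invoke Proposition~\ref{prop:interior-estimate} for the contradiction. The only difference is that the paper takes $x_N$ to be the maximizer of $u_N=\mathrm{dist}(h_N,\tilde r)$ itself rather than of $v_N=u_N+A\Phi$, so that Proposition~\ref{prop:interior-estimate} applies at an exact maximum; your choice makes the maximum-principle localization to $N_D(C)$ cleaner at the cost of the approximate-maximizer caveat, which you address correctly (and your parenthetical alternative is essentially the paper's route).
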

    \begin{proof}
        Assume that, possibly after passing to a subsequence, we have \[\sup_{B_N(o)}\mathrm{dist}(h_N, \tilde{r})\to\infty.\] 
        Note that from \cite{SCHOEN1979361}, we have 
        \begin{align*}
            \Delta\mathrm{dist}(h_N, \tilde{r})\gtrsim -\norm{\tau(\tilde{r})}\gtrsim -e^{-a\mathrm{dist}(\cdot, C)}.
        \end{align*}
        Therefore, for a suitably chosen constant $c$, the function
        \begin{align*}
            \mathrm{dist}(h_N, \tilde{r})+c\Phi 
        \end{align*}
        is subharmonic on $X\setminus N_D(C)$. 
        \par Let $x_N\in B_N(o)$ be the point where the maximum of $\mathrm{dist}(h_N, \tilde{r})$ is achieved. If $x_N\in X\setminus N_D(C)$ for infinitely many $N$, then 
        \begin{align*}
            \mathrm{dist}(h_N(x_N), \tilde{r}(x_N))+c\Phi(x_N)\leq \sup_{\partial B_N(o)}(\mathrm{dist}(h_N, \tilde{r})+c\Phi)\lesssim \norm{\Phi}_\infty\lesssim 1,
        \end{align*}
        which is a contradiction since $\Phi$ is bounded. Thus, for infinitely many $N$, we have $x_N\in N_D(C)$. Proposition \ref{prop:boundary-estimate} shows that 
        $\mathrm{dist}(x_N, \partial B_N(o))\to\infty$ as $N\to\infty$. In particular, for $N$ large enough, we may apply Proposition \ref{prop:interior-estimate}, to get that $\sup_N\sup_{B_N(o)}\mathrm{dist}(h_N, \tilde{r})<\infty$. This is a contradiction.
    \end{proof}
    \subsection{Proof of Proposition \ref{prop:interior-estimate}}\label{subsec:prop-interior-estimate}
    This follows immediately from Theorem \ref{thm:generalized-interior-estimate}, once we show that the family \[\{\tilde{r}:(X,x)\to (X, \tilde{r}(x))\text{ for }x\in N_D(C)\}\] is uniformly non-collapsing. Recall that $\tilde{r}$ is the smooth approximation of the nearest-point projection $r:X\to C$. By Proposition \ref{prop:qia-deform}, it suffices to show that the family
    \begin{align*}
        \{r:(X, x)\to (X, r(x))\text{ for }x\in N_D(C)\}
    \end{align*}
    is uniformly non-collapsing. The rest of this subsection is devoted to showing this.
    \par We first check Definition \ref{dfn:uniformly-inner}(1). Let $\theta, R_0$ be as in Definition \ref{dfn:admissible}. 
     Fix some $x\in N_D(C)$, and set $\rho(y)=\mathrm{dist}({r}(x), {r}(y))$. Let $\hat{x}\in C$ be such that $\mathrm{dist}(x, \hat{x})\leq 2D$.
    From Definition \ref{dfn:admissible}, we see that there exists some $\xi\in\partial_\infty X$, such that $\partial B_R(\hat{x})\cap\mathrm{Cone}(\hat{x}\xi, \theta)\subseteq \partial B_R(\hat{x})\cap C$ for all $R>R_0$. By Proposition \ref{proposition:moving-cone}, we have 
    \begin{align*}
        \mathrm{Cone}(x\xi, \hat{\theta})\cap\partial B_R(x)\subset \mathrm{Cone}(\hat{x}\xi, \theta)\setminus B_{R-2D}(\hat{x})\subset C,
    \end{align*}
    for $R>\max(\hat{D}, R_0+D)$, where $\hat{\theta},\hat{D}$ are the constants from Proposition \ref{proposition:moving-cone}. Then we have 
    \begin{align*}
        \int_{\partial B_R(x)} \rho(y)&\geq \int_{\partial B_R(x)\cap C} \rho(y)=\int_{\partial B_R(x)\cap C}\mathrm{dist}({r}(x), y)\\ 
        &\geq \sigma_{x,R}(\mathrm{Cone}(x\xi, \hat{\theta})\cap \partial B_R(x)) (R-\mathrm{dist}(x, \tilde{r}(x)))\\
        &\gtrsim R-D\approx R,
    \end{align*}
    where we used the fundamental estimate of Benoist--Hulin \cite{Benoist2020HarmonicMO} that $\sigma_{x, R}(\mathrm{Cone}(x\xi, \hat{\theta})\cap\partial B_R(x))\gtrsim 1$, and where we assumed $R>2D$. 
    \par We now turn to Definition \ref{dfn:uniformly-inner}(2).
    \begin{claim}\label{claim:angle-bound}
        Let $w, y\in C$ be such that $\mathrm{dist}(w, y)=R$. Then for any $z\in r^{-1}(y)$, we have $\measuredangle_{w}(y, z)\leq\pi e^{-aR}$.
    \end{claim}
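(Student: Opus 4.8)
The plan is to reduce the claim to an elementary trigonometric estimate in the model surface of constant curvature $-a^2$, using that nearest-point projections onto convex subsets of CAT$(0)$ spaces are orthogonal. We may assume $z\notin C$ and that $w,y,z$ are three distinct, non-collinear points; in all remaining configurations $\measuredangle_w(y,z)=0$ and there is nothing to prove (note that $z\notin C$ rules out $z=y$, since $r$ fixes $C$ pointwise, and $w$ lying strictly between $y$ and $z$ is impossible, as it would force $\measuredangle_y(z,w)<\tfrac{\pi}{2}$, contradicting the projection property used below).

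First I would invoke the standard fact (e.g.\ Bridson--Haefliger, Proposition II.2.4) that, since $y=r(z)$ is the nearest point of the convex set $C$ to $z$ and $w\in C$, one has $\measuredangle_y(z,w)\ge\tfrac{\pi}{2}$. This is the only point at which convexity of $C$, and the hypothesis $w\in C$, enter. Since $X$ has sectional curvature $\le -a^2$ it is a CAT$(-a^2)$ space, so comparing the geodesic triangle with vertices $w,y,z$ with its comparison triangle $\bar w\bar y\bar z$ in the plane of curvature $-a^2$ gives $\measuredangle_w(y,z)\le\bar\alpha:=\measuredangle_{\bar w}(\bar y,\bar z)$ and $\bar\beta:=\measuredangle_{\bar y}(\bar w,\bar z)\ge\measuredangle_y(z,w)\ge\tfrac{\pi}{2}$. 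It therefore suffices to bound $\bar\alpha$ in a hyperbolic triangle with $\mathrm{dist}(\bar w,\bar y)=R$ and an angle at least $\tfrac{\pi}{2}$ at $\bar y$.

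Next I would run the hyperbolic trigonometry. Set $t=\mathrm{dist}(\bar y,\bar z)=\mathrm{dist}(y,z)>0$ and $\bar b=\mathrm{dist}(\bar w,\bar z)$. The law of cosines at $\bar y$ together with $\cos\bar\beta\le 0$ gives $\cosh(a\bar b)\ge\cosh(aR)\cosh(at)$, and since $\cosh^2(aR)\cosh^2(at)-1\ge\cosh^2(aR)\sinh^2(at)$ this yields $\sinh(a\bar b)\ge\cosh(aR)\sinh(at)$. Combining with the law of sines,
\[
\sin\bar\alpha=\frac{\sin\bar\beta\,\sinh(at)}{\sinh(a\bar b)}\le\frac{\sinh(at)}{\cosh(aR)\sinh(at)}=\frac{1}{\cosh(aR)}.
\]
Because $\bar\beta\ge\tfrac{\pi}{2}$ forces $\bar\alpha<\tfrac{\pi}{2}$ (angle sum $<\pi$), the concavity bound $\sin x\ge\tfrac{2x}{\pi}$ on $[0,\tfrac{\pi}{2}]$ and the inequality $\cosh(aR)\ge\tfrac12 e^{aR}$ give $\bar\alpha\le\tfrac{\pi}{2}\sin\bar\alpha\le\tfrac{\pi}{2\cosh(aR)}\le\pi e^{-aR}$, hence $\measuredangle_w(y,z)\le\pi e^{-aR}$.

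I do not anticipate a genuine obstacle: the whole content is the one-line law-of-sines estimate above. The only steps needing care are quoting the correct orthogonality statement for projections onto convex sets (and being explicit that this is where $w\in C$ is used) and checking that, once the degenerate configurations are discarded, the comparison triangle is a bona fide hyperbolic triangle so that the angle-sum inequality and the elementary bounds apply.
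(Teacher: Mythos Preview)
Your proof is correct and follows essentially the same strategy as the paper: use the orthogonality of the nearest-point projection to get $\measuredangle_y(z,w)\geq\tfrac{\pi}{2}$, pass to a comparison triangle in the plane of curvature $-a^2$, and bound the angle at $\bar w$ by elementary hyperbolic trigonometry. The only cosmetic differences are that the paper uses an SAS (rather than your SSS) comparison triangle and applies the dual law of cosines (after introducing an auxiliary point to force a right angle at $\bar y$) in place of your law-of-sines estimate.
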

    \begin{proof}
        We define a comparison triangle $\bar{y}\bar{z}\bar{w}$ in the hyperbolic plane with the properties 
        \begin{gather*}
            \measuredangle_{\bar{y}}(\bar{z}, \bar{w})=\measuredangle_{y}(z, w),\text{ and } \\ 
            \mathrm{dist}(y,z)=\mathrm{dist}(\bar{y},\bar{z})\text{ and }\mathrm{dist}(y,w)=\mathrm{dist}(\bar{y},\bar{w}).
        \end{gather*}
        Then $\measuredangle_w(y,z)\leq\measuredangle_{\bar{w}}(\bar{y},\bar{z})$, so it suffices to estimate $\measuredangle_{\bar{w}}(\bar{y}, \bar{z})$. Note that $\measuredangle_{\bar{y}}(\bar{z}, \bar{w})\geq\measuredangle_y(z,w)\geq\frac{\pi}{2}$, so there exists a point $\bar{x}$ on the segment $\bar{z}\bar{w}$ such that $\measuredangle_{\bar{y}}(\bar{w},\bar{x})=\frac{\pi}{2}$. 
        Applying the dual hyperbolic law of cosines to the triangle $\bar{x}\bar{y}\bar{w}$ at the vertex $\bar{x}$, we see that 
        \begin{align*}
            \cos\measuredangle_{\bar{x}}(\bar{y}, \bar{w})=\sin\measuredangle_{\bar{w}}(\bar{y},\bar{x})\cosh(aR)\geq \frac{2\measuredangle_{\bar{w}}(\bar{y}, \bar{x})}{\pi}\frac{e^{aR}}{2}.
        \end{align*}
        Thus $\measuredangle_{\bar{w}}(\bar{y}, \bar{z})=\measuredangle_{\bar{w}}(\bar{y}, \bar{x})\leq \pi e^{-aR}$, concluding the proof.
    \end{proof}
    We now fix arbitrary $x\in N_D(C), \xi\in\partial_\infty X$ and $M>0$. Note that for any $y\in X$ such that $r(y)\in\mathrm{Cone}(r(x)\xi,\theta)\setminus B_M(r(x))$, applying Claim \ref{claim:angle-bound} to the points $r(x),r(y),y$, we see that $\measuredangle_{r(x)}(\xi, y)\leq\measuredangle_{r(x)}(\xi, r(y))+\pi e^{-aM}\leq \theta+\pi e^{-aM}$. In other words,
    \begin{align*}
        r^{-1}\left(\mathrm{Cone}(r(x)\xi, \theta)\setminus B_{M}(r(x))\right)\subseteq \mathrm{Cone}(r(x)\xi, \theta+\pi e^{-aM}),
    \end{align*}
    and hence in particular 
    \begin{align*}
        \partial B_R(x)\cap r^{-1}\left(\mathrm{Cone}(r(x)\xi,\theta)\setminus B_{\sqrt{R}}(r(x))\right)&\subseteq \partial B_R(x)\cap \mathrm{Cone}(r(x)\xi, \theta+\pi e^{-a\sqrt{R}})\\ 
        &\subseteq \partial B_R(x)\cap \mathrm{Cone}(x\xi,\tilde{\theta}(R, \theta)),
    \end{align*}
    where $\tilde{\theta}(R, \theta)\to 0$ as $R\to\infty, \theta\to 0$. Here in going from the first to the second line, we used Proposition \ref{proposition:moving-cone}. From the work of Benoist--Hulin \cite{Benoist2020HarmonicMO}, we see that $\sigma_{x,R}(\partial B_R(x)\cap \mathrm{Cone}(x\xi, \tilde{\theta}))\to 0$ as $\theta\to 0, R\to\infty$. Therefore Definition \ref{dfn:uniformly-inner}(2) holds, and Proposition \ref{prop:interior-estimate} is shown.
    \section{Admissible convex sets in hyperbolic spaces}\label{sec:admissible}
    In this section we prove Theorem \ref{thm:main-admissible}, that readily follows from the lemma below. For a set $S\subseteq\partial_\infty\mathbb{H}^n$, denote by $\mathrm{CH}(S)$ the closed convex hull of $S$.
    \begin{lemma}\label{lm:boundary-analysis}
        Let $S\subseteq\mathbb{S}^{n-1}$ be an open set with quasiconformal boundary. Then for any $D>0$, there exists an angle $\theta=\theta(D)>0$, such that for any $x\in N_D(\mathrm{CH}(S))$, there exists $\xi\in S$ such that $\mathrm{Cone}(x\xi, \theta)\cap\mathbb{S}^{n-1}\subseteq S$.
    \end{lemma}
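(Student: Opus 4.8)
The plan is to argue by contradiction via a blow‑up (rescaling) argument, as sketched in the introduction. Suppose the conclusion fails for some fixed $D>0$. Then there are points $x_i\in N_D(\mathrm{CH}(S))$ and angles $\theta_i\downarrow 0$ such that for every $\xi\in S$ we have $\mathrm{Cone}(x_i\xi,\theta_i)\cap\mathbb{S}^{n-1}\not\subseteq S$. Fix a basepoint $o\in\mathbb{H}^n$ and, for each $i$, an isometry $A_i$ of $\mathbb{H}^n$ with $A_i(x_i)=o$. Since $\measuredangle_x(\xi,\eta)$ is an isometry invariant and $\measuredangle_o$ is the round distance on $\mathbb{S}^{n-1}=\partial_\infty\mathbb{H}^n$ (under the identification of $\partial_\infty\mathbb{H}^n$ with the unit tangent sphere at $o$), the set $\mathrm{Cone}(x_i\xi,\theta_i)\cap\mathbb{S}^{n-1}$ is carried by $A_i$ onto the spherical ball of radius $\theta_i$ about $A_i\xi$. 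Hence our assumption translates into: the open set $A_i(S)\subseteq\mathbb{S}^{n-1}$ contains no open spherical ball of radius $\theta_i$. The goal is to contradict this by exhibiting, after passing to a subsequence, a spherical ball of radius bounded below inside $A_i(S)$.

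First I would rule out the possibility that the $x_i$ stay in a compact set. If they do, the $A_i$ lie in a compact subset of the isometry group, so after a subsequence $A_i\to A_\infty$, and then for any compact $\Omega\subseteq A_\infty(S)$ one has $\Omega\subseteq A_i(S)$ for all large $i$; since $A_\infty(S)$ is open and non‑empty it contains a spherical ball, whose slightly smaller concentric ball then lies in $A_i(S)$, contradicting the choice of $x_i$ once $\theta_i$ is below its radius. So, after a subsequence, $x_i\to\zeta\in\partial_\infty\mathbb{H}^n$; since $x_i\in N_D(\mathrm{CH}(S))$ there are $y_i\in\mathrm{CH}(S)$ with $\mathrm{dist}(x_i,y_i)\le D$, so $y_i\to\zeta$ and thus $\zeta\in\overline{\mathrm{CH}(S)}\cap\partial_\infty\mathbb{H}^n=\overline S$, i.e.\ $\zeta\in S$ or $\zeta\in\partial S$. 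The case $\zeta\in S$ is easy: $S$ contains a fixed spherical ball about $\zeta$, and choosing the $A_i$ to additionally fix $\zeta$ (permissible, as $A_i$ is free up to rotations about $o$), the Möbius maps $A_i|_{\mathbb{S}^{n-1}}$ expand near $\zeta$ by a conformal factor comparable to $e^{\mathrm{dist}(o,x_i)}\to\infty$, so $A_i(S)$ contains spherical balls about $\zeta$ of radius tending to $\pi$ — again a contradiction.

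The main case is $\zeta\in\partial S$, which is where the quasiconformality of $\partial S$ enters. Pass to the upper half‑space model with $\zeta=0\in\mathbb{R}^{n-1}=\partial_\infty\mathbb{H}^n\setminus\{\infty\}$ and with $x_i$ the point of height $t_i\to 0$ on the vertical geodesic over $0$, so that $A_i$ acts on the boundary as the Euclidean dilation $z\mapsto z/t_i$. Let $\phi\colon V\to\mathbb{R}^{n-1}$ be a local quasiconformal map with $\phi(0)=0$ and $\phi(S\cap V)=H\cap\phi(V)$, where $V$ is a neighbourhood of $0$ and $H=\mathbb{R}_+\times\mathbb{R}^{n-2}$ is an open half‑space through the origin. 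Because $H$ is invariant under positive dilations, a short computation gives $A_i(S)\cap\frac1{t_i}V=\tilde\phi_i^{-1}(H)$, where $\tilde\phi_i(z):=\mu_i^{-1}\phi(t_i z)$ and $\mu_i>0$ is any renormalizing scalar (the scalar cancels since $H$ is a cone); take e.g.\ $\mu_i=|\phi(t_i e_1)|$, so that $\tilde\phi_i(0)=0$ and $|\tilde\phi_i(e_1)|=1$. The $\tilde\phi_i$ are $K$‑quasiconformal with $K$ depending only on $\phi$, and their domains $\frac1{t_i}V$ exhaust $\mathbb{R}^{n-1}$. By the normal‑family and limit theory for quasiconformal maps (of the kind underlying the work of Tukia–V\"ais\"al\"a), after a further subsequence $\tilde\phi_i\to\phi_\infty$ and $\tilde\phi_i^{-1}\to\phi_\infty^{-1}$ locally uniformly, with $\phi_\infty$ a non‑degenerate $K$‑quasiconformal embedding of $\mathbb{R}^{n-1}$; in particular $\phi_\infty^{-1}(H)$ is open and non‑empty (it contains points near $0=\phi_\infty^{-1}(0)$, since $\phi_\infty$ is open), so it contains a Euclidean ball $B(p,\delta)$ with $p,\delta$ fixed. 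Then $B(p,\delta)\subseteq\tilde\phi_i^{-1}(H)\subseteq A_i(S)$ for all large $i$, and $B(p,\delta)$ contains a spherical ball of radius bounded below — the desired contradiction once $\theta_i$ is small. (When $n=2$ the boundary hypothesis is vacuous: $S$ is a union of open arcs, so near $\zeta$ it is simply an interval with endpoint $\zeta$, and the same rescaling shows $A_i(S)$ contains arcs of spherical length tending to $\pi$, with no quasiconformal input needed.)

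The main obstacle is precisely the quasiconformal compactness step just invoked: one must check that the rescaled straightening maps $\tilde\phi_i$ — which converge only after the scalar renormalization $\mu_i$, and only on domains exhausting $\mathbb{R}^{n-1}$ — subconverge to a genuinely injective, open quasiconformal map, so that $\phi_\infty^{-1}(H)$ has non‑empty interior and the ball $B(p,\delta)$ can be taken of fixed size. This is the only point where the quasiconformal regularity of $\partial S$ is used in an essential way, and it is exactly the normal‑family phenomenon for $K$‑quasiconformal maps of the type established by Tukia–V\"ais\"al\"a; the remaining ingredients (isometry invariance of angles, the identification of $\measuredangle_o$ with the round metric, and $\partial_\infty\mathrm{CH}(S)=\overline S$) are routine.
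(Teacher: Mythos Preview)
Your overall strategy coincides with the paper's: argue by contradiction, dispose of the compact and interior cases, and in the main case $\zeta\in\partial S$ rescale and pass to a quasiconformal limit. Most of this is fine, but there is a genuine gap in the main case. You assert that in the upper half-space model with $\zeta=0$ one may take $x_i$ to be the point of height $t_i$ on the vertical geodesic over $0$, so that the normalizing isometry $A_i$ is the pure dilation $z\mapsto z/t_i$. This is not a valid reduction. Writing $x_i=(p_i,t_i)$, the horizontal part $p_i$ need not vanish, and the ratio $|p_i|/t_i$ can be unbounded even under the hypothesis $x_i\in N_D(\mathrm{CH}(S))$; for instance, with $n=3$, $S=\{z_1>0\}\subset\mathbb{R}^2$ and $x_i=(i^{-3},\,i^{-1},\,i^{-2})$ one has $x_i\in\mathrm{CH}(S)$, $x_i\to 0$, and $|p_i|/t_i\asymp i$. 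With the pure dilation, $A_i(x_i)=(p_i/t_i,1)\neq o$, so your opening translation ``$A_i(S)$ contains no spherical ball of radius $\theta_i$'' breaks down: cones from $x_i$ correspond to balls in the visual metric from $A_i(x_i)$, not from $o$, and these metrics are not uniformly comparable when $|p_i|/t_i\to\infty$. The fixed Euclidean ball $B(p,\delta)\subset A_i(S)$ you produce may then have visual diameter tending to $0$ as seen from $A_i(x_i)$, so no contradiction is reached.

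The paper confronts exactly this point. It takes $A_i$ to fix $\zeta$ \emph{and} send $o$ to $x_i$ (a genuine M\"obius map, not a dilation), and post-composes the local straightening $f$ by a M\"obius map $B_i$ normalized at three fixed boundary points. The cost is that the rescaled half-spaces $G_i=B_i(H)$ are no longer constant and could in principle degenerate to a point or the whole sphere; the paper rules this out via a separate claim that $\sup_i\mathrm{dist}(x_i,\mathrm{GH}(S\cap U))<\infty$, which supplies two points of $S\cap U$ whose rescaled images stay a definite distance apart and hence pin down a genuine limiting hemisphere $G$. Your argument, by collapsing $B_i$ to a scalar and tacitly setting $p_i=0$, sidesteps precisely this non-degeneracy issue; once that assumption is removed, some substitute for the paper's claim is needed.
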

    We prove Lemma \ref{lm:boundary-analysis} in \S\ref{subsec:bdry-analysis}. Theorem \ref{thm:main-admissible} then follows by simple hyperbolic geometry, that we explain in \S\ref{subsec:pf-thm-main-admissible}.
    \subsection{Boundary analysis: Proof of Lemma \ref{lm:boundary-analysis}}\label{subsec:bdry-analysis}
    Suppose that the conclusion of Lemma \ref{lm:boundary-analysis} fails. Then there exists a sequence $x_i\in\mathbb{H}^n$ such that $\sup_i \mathrm{dist}(x_i, \mathrm{CH}(S))<\infty$, and such that 
    \begin{align}\label{eq:contrary-assumption}
        \sup\{\theta:\mathrm{Cone}(x_i\xi, \theta)\cap\mathbb{S}^{n-1}\subseteq S\text{ for some }\xi\in S\}\to 0
    \end{align}
    as $i\to\infty$. Note that if $x_i$ remain in some compact set, after passing to a subsequence, we may assume that $x_i\to x_\infty\in X$. Since $S$ is an open set, this is a contradiction with (\ref{eq:contrary-assumption}). Assume therefore that $x_i\to s\in\partial_\infty X$, possibly after passing to a subsequence. Since $\sup_i \mathrm{dist}(x_i, \mathrm{CH}(S))<\infty$, we have $s\in \bar{S}$.
    If $s\in S\setminus\partial S$, then for any positive $\theta$, we have $\mathrm{Cone}(x_i s,\theta)\cap\mathbb{S}^{n-1}\subseteq S$ for all $i$ large enough, contradicting (\ref{eq:contrary-assumption}). 
    \par Therefore assume $s\in\partial S$. Let $U$ be an open set containing $s$, and $f:U\to V\subseteq\mathbb{R}^{n-1}$ be a quasiconformal homeomorphism, such that $f(s)=0$ and 
        \begin{align*}
            f(S\cap U)=V\cap(\mathbb{R}_{+}\times\mathbb{R}^{n-2}).
        \end{align*}
    Fix a point $o\in\mathbb{H}^n$, and let $A_i$ be an isometry of $\mathbb{H}^n$ with $A_i(o)=x_i$ and $A_i(s)=s$. Fix a point $y\in \mathbb{S}^{n-1}\setminus {U}$, and pass to a subsequence such that $A_i^{-1}(y)\to \hat{y}\in\mathbb{S}^{n-1}$. Note that since $x_i\to s$, by construction we have $A_i^{-1}(\mathbb{S}^{n-1}\setminus U)\to \{\hat{y}\}$ in the sense of Hausdorff distance.
    \par Now consider $f_i=B_i\circ f\circ A_i$, where $B_i$ is an isometry of $\mathbb{H}^n$ with $B_i\circ f_i\circ A_i(x)=x$ for $x\in \{s, u, v\}$, where $u,v\in\mathbb{S}^{n-1}\setminus \{s, \hat{y}\}$ are distinct arbitrarily chosen points. By standard compactness results on quasiconformal mappings (see e.g. \cite[Chapter 14, Theorems 3.1 and 3.2]{2014}), after passing to a subsequence, we may assume that $f_i$ converge uniformly on compact sets to a quasiconformal embedding $\hat{f}:\mathbb{S}^{n-1}\setminus\{\hat{y}\}\to\mathbb{S}^{n-1}$. Then $\hat{f}$ extends to a quasiconformal map $\mathbb{S}^{n-1}\to\mathbb{S}^{n-1}$, that we also denote by $\hat{f}$ (see \cite[Chapter 14, Theorem 10.6]{2014} or \cite[Chapter 14, Theorem 8.6]{2014}). 
    \par Let $G_i=B_i(\mathbb{R}_+\times\mathbb{R}^{n-2})$. Thus $f_i$ maps $A_i^{-1}(S\cap U)$ to $G_i$. 
    Define $\mathrm{GH}(A)$ of a set $A\subseteq\mathbb{S}^{n-1}$ to be the union of all geodesics with both endpoints in $A$. 
    \begin{claim}\label{claim:dist-to-gh}
        We have $\sup_i \mathrm{dist}(x_i, \mathrm{GH}(S\cap U))<\infty$.
    \end{claim}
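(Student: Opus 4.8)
The plan is to transport the statement along the isometries $A_i^{-1}$. Since $A_i^{-1}(\mathrm{GH}(S\cap U))=\mathrm{GH}(A_i^{-1}(S\cap U))$ and $A_i^{-1}(\mathrm{CH}(S))=\mathrm{CH}(A_i^{-1}(S))$, the claim is equivalent to $\sup_i\mathrm{dist}(o,\mathrm{GH}(A_i^{-1}(S\cap U)))<\infty$, while the standing hypothesis reads $\sup_i\mathrm{dist}(o,\mathrm{CH}(A_i^{-1}(S)))<\infty$. For nonempty $T\subseteq\partial_\infty\mathbb{H}^n$ set $\mathrm{diam}_o(T)=\sup_{\xi,\eta\in T}\measuredangle_o(\xi,\eta)$. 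For distinct $\xi,\eta$ the distance $\mathrm{dist}(o,[\xi,\eta])$ depends only on $\measuredangle_o(\xi,\eta)$ and is a continuous strictly decreasing function of it (comparison inside the hyperbolic plane through $o,\xi,\eta$), so $\mathrm{dist}(o,\mathrm{GH}(T))=g(\mathrm{diam}_o(T))$ for a fixed continuous decreasing $g:(0,\pi]\to[0,\infty)$. Thus the claim reduces to showing that the open sets $A_i^{-1}(S\cap U)$ do not shrink to a point as seen from $o$, i.e.\ $\inf_i\mathrm{diam}_o(A_i^{-1}(S\cap U))>0$.

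After passing to a subsequence, the round balls $G_i=B_i(\mathbb{R}_+\times\mathbb{R}^{n-2})$ converge in the round metric of $\mathbb{S}^{n-1}$ to a closed set $G_\infty$, which (being a Hausdorff limit of round balls) is either a single point or a closed ball with nonempty interior, with $\mathrm{int}(G_\infty)\subseteq\liminf_i G_i$. Moreover, since $A_i^{-1}(U)$ exhausts $\mathbb{S}^{n-1}\setminus\{\hat y\}$ and $\hat f$ is a homeomorphism, $B_i(V)=f_i(A_i^{-1}(U))$ exhausts $\mathbb{S}^{n-1}\setminus\{\hat f(\hat y)\}$ and $f_i^{-1}\to\hat f^{-1}$ locally uniformly there. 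If $G_\infty$ has nonempty interior, pick distinct $\xi_0,\eta_0$ in the fixed nonempty open set $\hat f^{-1}(\mathrm{int}(G_\infty)\setminus\{\hat f(\hat y)\})$; they subtend an angle $2\beta>0$ at $o$. For large $i$ one has $\hat f(\xi_0),\hat f(\eta_0)\in G_i\cap B_i(V)=f_i(A_i^{-1}(S\cap U))$, so $f_i^{-1}(\hat f(\xi_0)),f_i^{-1}(\hat f(\eta_0))\in A_i^{-1}(S\cap U)$ and converge to $\xi_0,\eta_0$; hence $\mathrm{diam}_o(A_i^{-1}(S\cap U))\geq\beta$ for large $i$, which gives the claim in this case.

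So everything comes down to excluding the degenerate case $G_\infty=\{g\}$, which I expect to be the main obstacle. One first sees that $g$ must equal $\hat f(s)=s$: since $s\in\partial S\cap U$ and $A_i(s)=s$, the point $s$ lies in $\overline{A_i^{-1}(\partial S\cap U)}$ for every $i$, and $f_i(A_i^{-1}(\partial S\cap U))=\partial G_i\cap B_i(V)$ with $\partial G_i\to\{g\}$, forcing $\hat f(s)=g$, while the normalization forces $\hat f(s)=s$. If in addition $\hat y=s$, then the complementary ball $G_i'$ must fill $\mathbb{S}^{n-1}$, so $A_i^{-1}(U\setminus\overline S)$ exhausts $\mathbb{S}^{n-1}\setminus\{\hat y\}$ and $A_i^{-1}(S)$ Hausdorff-converges to $\{\hat y\}$, contradicting $\sup_i\mathrm{dist}(x_i,\mathrm{CH}(S))<\infty$. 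The remaining possibility --- $\mathrm{radius}(G_i)\to0$ with $G_i$ collapsing onto $s$ and $G_i'$ filling $\mathbb{S}^{n-1}$ --- is the delicate one. Here one must use that $\partial S$ near $s$ is a quasiconformal image of a piece of hyperplane through $s$, hence that its blow-ups $A_i^{-1}(\partial S\cap U)$ (equivalently the round balls $G_i=B_i(\mathbb{R}_+\times\mathbb{R}^{n-2})$) cannot collapse to a point; this is exactly the point at which the quasiconformal regularity hypothesis enters, in the spirit of the Tukia--V\"ais\"al\"a theory of quasiconformal images of hyperplanes. Making this last exclusion precise --- controlling the germ of $\partial S$ at $s$ under the hyperbolic blow-ups $A_i^{-1}$ --- is where I expect most of the work to lie.
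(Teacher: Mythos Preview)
Your reduction to the statement $\inf_i\mathrm{diam}_o(A_i^{-1}(S\cap U))>0$ is correct, and your treatment of the case where the Hausdorff limit $G_\infty$ of the balls $G_i$ has nonempty interior is fine. The problem is that you have not proved the claim: the degenerate case $G_\infty=\{s\}$ with $\hat y\neq s$ is precisely the content, and you leave it as ``where most of the work lies'', gesturing at Tukia--V\"ais\"al\"a. In fact the logical flow of the paper is the reverse of what you attempt: the non-degeneracy of $\lim G_i$ is \emph{deduced from} this claim (via the points $z,t$ obtained from the geodesic close to $x_i$), not used to prove it. So your strategy is trying to establish the harder statement first.

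More importantly, you are bringing in far too much machinery. The paper's proof uses none of $f$, $f_i$, $B_i$, $\hat f$, or $G_i$; it is pure hyperbolic geometry and uses only that $S$ is open near $s$. The argument is: since $\mathrm{CH}(S)\subseteq N_C(\mathrm{GH}(S))$, one has $\sup_i\mathrm{dist}(x_i,\mathrm{GH}(S))=:K<\infty$. Pick nested disks $\tilde U\subset\hat U\subset U$ about $s$. For large $i$ the point $x_i$ is far from $\mathrm{CH}(\mathbb{S}^{n-1}\setminus\tilde U)$, so any geodesic $[\xi,\eta]$ with $\xi,\eta\in S$ that comes within $K$ of $x_i$ must have an endpoint, say $\xi$, in $\tilde U$. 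If also $\eta\in U$ we are done. Otherwise fix once and for all some $s_0\in S\cap U$; then the function
\[
(\xi,\eta)\longmapsto \sup_{z\in[\xi,\eta]\cap\mathrm{CH}(\tilde U)}\mathrm{dist}(z,[\xi,s_0])
\]
is continuous on $\hat U\times(\mathbb{S}^{n-1}\setminus U)$ and hence bounded on the relatively compact set $\tilde U\times(\mathbb{S}^{n-1}\setminus U)$. This bounds $\mathrm{dist}(x_i,[\xi,s_0])$ uniformly, and $[\xi,s_0]\subset\mathrm{GH}(S\cap U)$. That is the whole proof: replace the far endpoint by a fixed nearby one at bounded cost.
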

    We first finish the proof assuming Claim \ref{claim:dist-to-gh}. Note that by Claim \ref{claim:dist-to-gh}, for any $i$, there exist $z_i, t_i\in S\cap U$ with the property that \begin{align}\label{eq:supi}\sup_i\mathrm{dist}(x_i, [z_i, t_i])<\infty. \end{align} After passing to a subsequence, we may assume that $A_i^{-1}(z_i)\to z, A_i^{-1}(t_i)\to t$. By (\ref{eq:supi}), we see that $z\neq t$. Observe that $G_i$ is a sequence of hemispheres in $\mathbb{S}^{n-1}$ such that $f_i(A_i^{-1}(z_i)), f_i(A_i^{-1}(t_i))\in G_i$. Note that $f_i(A_i^{-1}(z_i))\to \hat{f}(z)$ and $f_i(A_i^{-1}(t_i))\to \hat{f}(t)$. Since $z\neq t$, it follows that $\hat{f}(z)\neq\hat{f}(t)$, and thus there exists a hemisphere $G$ such that $G_i\to G$.
    \par We now finish the proof of Lemma \ref{lm:boundary-analysis}. Since $\hat{f}$ is a quasiconformal map, there exist a $\xi\in\mathbb{S}^{n-1}$ and $\theta>0$ such that, for $D=\mathrm{Cone}(o\xi,\theta)\cap\mathbb{S}^{n-1}$, we have $\hat{y}\not\in D$, and $f(D)\subset G$.
    \par For $i$ large enough, it follows that $D\subset A_i^{-1}(U)$, and that $f_i(D)\subset G_i=B_i(\mathbb{R}_+\times\mathbb{R}^{n-2})$. Thus $A_i(D)\subset U$ and $f(A_i(D))\subset\mathbb{R}_+\times\mathbb{R}^{n-2}$. Therefore $\mathrm{Cone}(x_iA_i(\xi), \theta)=A_i(D)\subset S\cap U$, which is a contradiction with (\ref{eq:contrary-assumption}). 
    \begin{proof}[Proof of Claim \ref{claim:dist-to-gh}]
        By \cite[Claim 5.3]{tosic}, we have $\mathrm{CH}(S)\subseteq N_{C}(\mathrm{GH}(S))$. Thus $K:=\sup_i \mathrm{dist}(x_i, \mathrm{GH}(S))<\infty$. Let $\tilde{U}\subset\hat{U}\subset U$ be open disks centered at $s$, such that $S\cap U\setminus \hat{U}\neq\emptyset$. Let $s_0\in S\cap U\setminus \hat{U}$ be arbitrary.
        \par For all $i$ larger than some $N_0$, we have $\mathrm{dist}(x_i, \mathrm{CH}(\partial_\infty\mathbb{H}^n\setminus\tilde{U}))\geq 2K$. For any such $i$, we therefore have $B_K(x_i)\cap \mathrm{CH}(\partial_\infty\mathbb{H}^n\setminus\tilde{U})=\emptyset$. In particular, if $\mathrm{dist}(x_i, [\xi,\eta])\leq K$, we must have $\xi\in \tilde{U}$ or $\eta\in\tilde{U}$.
        \par Therefore, we have 
        \begin{align*}
            \sup_i\mathrm{dist}\left(x_i, \mathrm{GH}(S\cap U)\cup[\tilde{U}, \partial_\infty\mathbb{H}^n\setminus U]\right)<\infty,
        \end{align*}
        where we denote $[A,B]=\bigcup_{a\in A, b\in B}[a, b]$. Let $s_0\in S\cap \tilde{U}\setminus\{s\}$ be arbitrary. Now observe that the function 
        \begin{align*}
            D:\hat{U}\times(\partial_\infty\mathbb{H}^n\setminus U)&\longrightarrow\mathbb{R}_{+}\\
            (\xi,\eta) &\longrightarrow \sup_{z\in [\xi,\eta]\cap \mathrm{CH}(\tilde{U})}\mathrm{dist}(z, [\xi, s_0])
        \end{align*}
        is continuous, and is thus bounded on the relatively compact subset $\tilde{U}\times(\partial_\infty\mathbb{H}^n\setminus U)\subset \hat{U}\times(\partial_\infty\mathbb{H}^n\setminus U)$. We then have for $i\geq N_0$,
        \begin{align*}
            \mathrm{dist}(x_i, \mathrm{GH}(S\cap U))\leq \mathrm{dist}(x_i, \mathrm{GH}(S\cap U)\cup[\tilde{U}, \partial_\infty\mathbb{H}^n\setminus U])+\sup_{\tilde{U}\times(\partial_\infty\mathbb{H}^n\setminus U)}D,
        \end{align*}
        and thus $\sup_i\mathrm{dist}(x_i, \mathrm{CH}(S\cap U))\leq\sup_i\mathrm{dist}(x_i, \mathrm{GH}(S\cap U))<\infty$. 
    \end{proof}
    \subsection{Proof of Theorem \ref{thm:main-admissible}}\label{subsec:pf-thm-main-admissible}
    For any $x\in N_D(C)$, there exists by Lemma \ref{lm:boundary-analysis} an angle $\theta=\theta(D)>0$ and $\xi\in \partial_\infty \mathbb{H}^n$ such that
    \begin{align*}
        \mathrm{Cone}(x\xi, \theta)\cap\partial_\infty \mathbb{H}^n\subseteq U.
    \end{align*}
    We claim that for all $R>R_0=R_0(D)$,  
    \begin{align}\label{eq:cone-containment}
        \mathrm{Cone}\left(x\xi, \frac{\theta}{12}\right)\cap\partial B_R(x)\subseteq\mathrm{CH}\left(\mathrm{Cone}(x\xi, \theta)\cap \partial_\infty X\right).
    \end{align}
    Note that (\ref{eq:cone-containment}) immediately shows admissibility of $\mathrm{CH}(U)$, so the rest of this subsection is devoted to showing (\ref{eq:cone-containment}).
    \par Let $y\in\mathrm{Cone}\left(x\xi, \frac{\theta}{12}\right)\cap\partial B_R(x)$ be arbitrary. Pick any point $\eta_1\in\partial_\infty \mathbb{H}^n$ such that $\frac{\theta}{6}<\measuredangle_x(\eta_1, \xi)<\frac{\theta}{3}$, and let $\eta_2\in\partial_\infty \mathbb{H}^n$ be such that $y\in[\eta_1, \eta_2]$. Then in particular we have 
    \begin{align*}
        \frac{\theta}{12}<\measuredangle_x(\eta_1, y)<\frac{5}{12}\theta.
    \end{align*}
    Claim \ref{claim:tiny-angle-side} below shows that, for $R$ large enough depending on $\theta$, we have 
    \begin{align*}
        \measuredangle_x(y, \eta_2)<\frac{\theta}{12}.
    \end{align*}
    Thus $\measuredangle_{x}(\eta_1, \eta_2)<\frac{\theta}{2}$, and hence $\eta_2\in\mathrm{Cone}(x\xi, \theta)\cap\partial_\infty \mathbb{H}^n$. Then the set $\mathrm{CH}\left(\mathrm{Cone}(x\xi,\theta)\cap\partial_\infty \mathbb{H}^n\right)$ contains the entire geodesic $[\eta_1, \eta_2]$, and hence also contains $y$.
    \begin{claim}\label{claim:tiny-angle-side}
        Let $x, y\in \mathbb{H}^n$ and $\xi, \eta\in\partial_\infty \mathbb{H}^n$ be such that $y\in [\xi, \eta]$. If $\measuredangle_x(\xi, y)=\alpha$ and $\mathrm{dist}(x, y)=R$, we have 
        \begin{align*}
            \measuredangle_x(y, \eta)\lesssim e^{-2R},
        \end{align*}
        where the implicit constant depends on $\alpha$. 
    \end{claim}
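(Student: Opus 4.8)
\emph{Proof plan.} Since everything takes place in $\mathbb{H}^n$, I would first reduce to the plane. If $x\in[\xi,\eta]$ the configuration is degenerate (it forces $\alpha\in\{0,\pi\}$) and may be discarded, so assume $x\notin\ell$, where $\ell=[\xi,\eta]$. Then $x$ and $\ell$ span a totally geodesic $P\cong\mathbb{H}^2$ containing $\xi,\eta,y$ and every geodesic $[x,z]$ with $z\in\ell\cup\{\xi,\eta\}$. Let $p$ be the foot of the perpendicular from $x$ to $\ell$ and $d=\mathrm{dist}(x,p)$. As $z$ runs over $\ell$ from $\xi$ to $\eta$, the direction of $[x,z]$ at $x$ sweeps monotonically through an arc of the circle of directions in $P$ of total angular width $\measuredangle_x(\xi,\eta)=2\arctan(1/\sinh d)<\pi$; since this arc has width $<\pi$, the visual angle between any two of the relevant directions is just the difference of a single signed angular coordinate $\theta(\cdot)$, normalised so that $\theta(p)=0$, $\theta(\eta)=\psi$ and $\theta(\xi)=-\psi$, where $\psi:=\arctan(1/\sinh d)\in(0,\pi/2)$. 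For $z\in\ell$ at signed distance $t(z)$ from $p$ (positive towards $\eta$) the standard hyperbolic right-triangle identities give $\tan\theta(z)=\tanh t(z)/\sinh d$ and $\cosh\mathrm{dist}(x,z)=\cosh d\,\cosh t(z)$.

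Next I would rewrite the hypotheses in this coordinate. Because $\theta(y)\in(-\psi,\psi)$, the equation $\measuredangle_x(\xi,y)=|\theta(y)+\psi|=\alpha$ forces $\theta(y)=\alpha-\psi$, and then the quantity to be bounded is $\beta:=\measuredangle_x(y,\eta)=\psi-\theta(y)=2\psi-\alpha\ge 0$ (in particular $\psi\ge\alpha/2$). Substituting $\psi=\tfrac12(\alpha+\beta)$ and $\theta(y)=\tfrac12(\alpha-\beta)$ into $\tanh t(y)=\sinh d\,\tan\theta(y)=\cot\psi\,\tan\theta(y)$ and applying the product-to-sum formulas to $\cos\psi\sin\theta(y)$ and $\sin\psi\cos\theta(y)$ collapses everything to the clean identity $\tanh t(y)=\dfrac{\sin\alpha-\sin\beta}{\sin\alpha+\sin\beta}$, equivalently $1-\tanh|t(y)|=\dfrac{2\min(\sin\alpha,\sin\beta)}{\sin\alpha+\sin\beta}$. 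This identity is the heart of the matter.

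To finish I would combine three elementary observations. First, $\psi\ge\alpha/2$ gives $\sinh d=\cot\psi\le\cot(\alpha/2)$, hence $\cosh d\le 1/\sin(\alpha/2)$, so from $\cosh|t(y)|=\cosh R/\cosh d$ we get $\cosh|t(y)|\ge\sin(\alpha/2)\cosh R$ and therefore $e^{-2|t(y)|}\lesssim e^{-2R}$ with a constant depending only on $\alpha$; in particular $|t(y)|\to\infty$ as $R\to\infty$. Second, using $e^{-2s}\le 1-\tanh s\le 2e^{-2s}$ for $s\ge0$, the identity above yields $\min(\sin\alpha,\sin\beta)\le e^{-2|t(y)|}(\sin\alpha+\sin\beta)\le 2e^{-2|t(y)|}$; since $\sin\alpha$ is a fixed positive number, once $|t(y)|$ (hence $R$) is large enough in terms of $\alpha$ the minimum must be $\sin\beta$, and the same inequality then rearranges to $\sin\beta\le 2e^{-2|t(y)|}\lesssim e^{-2R}$. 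Third, for such $R$ the angle $\beta$ is small, so $\beta\le\tfrac{\pi}{2}\sin\beta\lesssim e^{-2R}$ (with an $\alpha$-dependent constant); for the remaining bounded range of $R$ the trivial bound $\measuredangle_x(y,\eta)\le\pi$ is absorbed into the implicit constant.

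The one place where genuine care is needed is tracking on which side of the foot $p$ the point $y$ sits — equivalently whether $\sin\beta\le\sin\alpha$ or $\sin\beta>\sin\alpha$ in the boxed identity. The real content of the argument is precisely that the "wrong side'' case $\sin\beta>\sin\alpha$ (that is, $y$ lying between $p$ and $\xi$) cannot survive: there $\min(\sin\alpha,\sin\beta)=\sin\alpha$, so $\sin\alpha\le 2e^{-2|t(y)|}$, which is impossible once $|t(y)|$ — hence $R$ — is large, because $\alpha$ is a fixed positive angle. Everything else is routine hyperbolic trigonometry and bookkeeping of constants.
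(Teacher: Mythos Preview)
Your argument is correct, but it proceeds along a genuinely different path from the paper's two–line proof. The paper applies the dual hyperbolic law of cosines directly to the two ideal triangles $xy\xi$ and $xy\eta$; from the first it reads off $\measuredangle_{y}(\xi,x)\lesssim_{\alpha} e^{-R}$, and then the second equation forces $\beta\lesssim_{\alpha}\measuredangle_{y}(\xi,x)^{2}\lesssim_{\alpha} e^{-2R}$. The intermediate quantity there is the angle at $y$, not a distance along $\ell$. Your approach instead drops a perpendicular to $\ell$, passes to the signed arclength coordinate $t$ along $\ell$, and extracts the exact identity $\tanh t(y)=\dfrac{\sin\alpha-\sin\beta}{\sin\alpha+\sin\beta}$; the bound then comes from $e^{|t(y)|}\gtrsim_{\alpha} e^{R}$ together with $1-\tanh s\asymp e^{-2s}$. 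Both routes are elementary hyperbolic trigonometry; the paper's is shorter and isolates the conceptual reason for the square (the angle at $y$ is $\lesssim e^{-R}$, and $\beta$ is quadratic in it), while yours yields a clean closed-form relation between $\alpha,\beta$ and $t(y)$ and makes the dichotomy ``which side of the foot $p$'' completely explicit. Either would serve in the application, since only a lower bound on $\alpha$ is used there.
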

    \begin{proof}
        By the dual hyperbolic law of cosines applied to $xy\xi$ and to $xy\eta$, we see that 
        \begin{gather}
            1=-\cos\alpha \cos\measuredangle_{{y}}({\xi},{x})+\sin\alpha\sin\measuredangle_{{y}}({\xi}, {x})\cosh(R),\label{eq:cosines-1} \\ 
            1=\cos\beta\cos\measuredangle_{{y}}({\xi}, {x}) + \sin\beta\sin\measuredangle_{{y}}({\xi}, {x})\cosh(R).\label{eq:cosines-2}
        \end{gather}
        It follows from (\ref{eq:cosines-1}) that for large $R$, we have $\measuredangle_{{y}}({x}, {\xi})\lesssim e^{-R}$. Straightforward analysis of (\ref{eq:cosines-2}) then implies $\beta\lesssim \measuredangle_{{y}}({\xi},{x})^2\lesssim e^{-2R}$.
    \end{proof}
    \bibliographystyle{amsplain}
    \bibliography{main}

\providecommand{\bysame}{\leavevmode\hbox to3em{\hrulefill}\thinspace}
\providecommand{\MR}{\relax\ifhmode\unskip\space\fi MR }
\providecommand{\MRhref}[2]{%
  \href{http://www.ams.org/mathscinet-getitem?mr=#1}{#2}
}
\providecommand{\href}[2]{#2}
\begin{thebibliography}{10}

\bibitem{2014}
\emph{Handbook of teichmüller theory, volume iv}, EMS Press, May 2014.

\bibitem{bh-rank-one}
Yves Benoist and Dominique Hulin, \emph{Harmonic quasi-isometric maps between
  rank one symmetric spaces}, Annals of Mathematics \textbf{185} (2017),
  895--917.

\bibitem{Benoist2017HarmonicQM}
\bysame, \emph{Harmonic quasi-isometric maps {II}: negatively curved
  manifolds}, Journal of the European Mathematical Society (2017).

\bibitem{Benoist2020HarmonicMO}
\bysame, \emph{Harmonic measures on negatively curved manifolds}, Annales de
  l'Institut Fourier \textbf{69} (2020), 2951--2971.

\bibitem{bourdon1993actions}
M.~Bourdon and P.~Pansu, \emph{Actions quasi-convexes d'un groupe hyperbolique:
  flot g{\'e}od{\'e}sique}, 1993.

\bibitem{cheng}
S.-Y. Cheng, \emph{{L}iouville theorem for harmonic maps}, Geometry of the
  {L}aplace operator (Jan Fagerberg, David~C. Mowery, and Richard~R. Nelson,
  eds.), American Mathematical Society, 1980, pp.~147--151.

\bibitem{lemm-markovic}
Marius Lemm and Vladimir Markovi\'{c}, \emph{Heat flows on hyperbolic spaces},
  Journal of Differential Geometry \textbf{108} (2015).

\bibitem{markovic-h3}
Vladimir Markovi{\'c}, \emph{Harmonic maps between 3-dimensional hyperbolic
  spaces}, Inventiones mathematicae \textbf{199} (2013), 921--951.

\bibitem{markovic-schoen}
\bysame, \emph{Harmonic maps and the {S}choen conjecture}, Journal of the
  American Mathematical Society \textbf{30} (2017), 799--817.

\bibitem{petersen}
Peter Petersen, \emph{Riemannian geometry}, Graduate Texts in Mathematics,
  Springer New York, 2006.

\bibitem{SCHOEN1979361}
Richard Schoen and Shing~Tung Yau, \emph{Compact group actions and the topology
  of manifolds with non-positive curvature}, Topology \textbf{18} (1979),
  no.~4, 361--380.

\bibitem{tosic}
Ognjen To\v{s}i\'{c}, \emph{Harmonic projections in negative curvature}, To
  appear in Communications in Analysis and Geometry (2022).

\bibitem{Tukia19829uasiconformalEF}
Pekka Tukia and Jussi V\"ais\"al\"a, \emph{Quasiconformal extension from
  dimension $n$ to $n + 1$}, Annals of Mathematics \textbf{115} (1982), 331.

\end{thebibliography}
\end{document}